\documentclass[a4paper]{amsart}
\usepackage{amscd,amsmath,amssymb,amsfonts,verbatim}
\usepackage[cmtip, all]{xy}
\usepackage{pgf,tikz}
\usepackage{mathrsfs}
\usetikzlibrary{arrows}



\newtheorem{thm}{Theorem}[subsection]
\newtheorem{prop}[thm]{Proposition}
\newtheorem{lem}[thm]{Lemma}
\newtheorem{cor}[thm]{Corollary}

\theoremstyle{definition}

\newtheorem{defn}[thm]{Definition}
\theoremstyle{remark}
\newtheorem{remk}[thm]{Remark}
\newtheorem{remks}[thm]{Remarks}

\newtheorem{exm}[thm]{Example}
\newtheorem{exms}[thm]{Examples}
\newtheorem{notat}[thm]{Notation}
\numberwithin{equation}{subsection}

{\hfill$\square$\end{defn}}
{\hfill$\square$\end{remk}}
{\hfill$\square$\end{remks}}
{\hfill$\square$\end{exm}}
{\hfill$\square$\end{exms}}
{\hfill$\square$\end{notat}}

\newcommand{\CH}{{\rm CH}}
\newcommand{\BGH}{\mathbf{C}\mathbf{H}}

\newcommand{\red}{{\rm red}}
\newcommand{\codim}{{\rm codim}}

\newcommand{\Spec}{{\rm Spec \,}}
\newcommand{\Spf}{{\rm Spf \,}}
\newcommand{\Sp}{{\rm Sp \,}}

\newcommand{\Sch}{{\operatorname{\mathbf{Sch}}}}

\renewcommand{\max}{{\operatorname{\rm max}}}

\newcommand{\ds}{{/\kern-3pt/}}

\newcommand{\Tor}{{\operatorname{Tor}}}

\newcommand{\un}{\underline}

\renewcommand{\dim}{\text{\rm dim}}

\newcommand{\tuborg}{\left\{\begin{array}{ll}}
\newcommand{\sluttuborg}{\end{array}\right.}

\newcommand{\dashedrightarrow}{\dashrightarrow}

\newcommand{\llp}{(\! (}
\newcommand{\rlp}{)\!)}
\renewcommand{\mod}{ {\rm \ mod \ } }

\begin{document}
\title[Motivic cohomology of fat points]{Motivic cohomology of fat points in Milnor range via formal and rigid geometries}
\author{Jinhyun Park}
\address{Department of Mathematical Sciences, KAIST, 291 Daehak-ro Yuseong-gu, Daejeon, 34141, Republic of Korea (South)}
\email{jinhyun@mathsci.kaist.ac.kr; jinhyun@kaist.edu}


\keywords{algebraic cycle, Chow group, singular scheme, formal scheme, Tate algebra, rigid geometry, motivic cohomology, Milnor $K$-theory, algebraic de Rham cohomology, de Rham-Witt form}

\subjclass[2020]{Primary 14C25; Secondary 19D45, 13F25, 14B20, 16W60}

\begin{abstract}
We present a formal scheme based cycle model for the motivic cohomology of the fat points defined by the truncated polynomial rings $k[t]/(t^m)$ with $m \geq 2$, in one variable over a field $k$. We compute their Milnor range cycle class groups when the field has sufficiently many elements. 

With some aids from rigid analytic geometry and the Gersten conjecture for the Milnor $K$-theory resolved by M. Kerz, we prove that the resulting cycle class groups are isomorphic to the Milnor $K$-groups of the truncated polynomial rings, generalizing a theorem of Nesterenko-Suslin and Totaro. 
\end{abstract}

\maketitle

\setcounter{tocdepth}{1}

\tableofcontents

\section{Introduction}
In this article, we propose a new cycle model through formal geometry for the motivic cohomology of the singular non-reduced scheme $Y:=\Spec (k_m)$, where $k_m:= k[t]/(t^m)$, and we compute its Milnor range for all base fields $k$ except a finite number of finite fields, using some ideas from rigid analytic geometry. The main theorem is summarized in Theorem \ref{thm:main intro} below. 

\subsection{Background and motivation}
The motivic cohomology of smooth $k$-schemes is given by higher Chow groups of S. Bloch \cite{Bloch HC}, but the higher Chow complexes fail to give correct models of motivic cohomology when the schemes have singularities. 

\medskip

For such non-smooth schemes, approaches via additive higher Chow groups (\cite{BE2}, \cite{P2}, \cite{R}) and cycles with modulus (\cite{BS}) were proposed as part of potential extensions. Various theoretical advances were made (e.g. \cite{KMSY1}, \cite{KMSY2}, \cite{KMSY3}, \cite{KP crys}) in this direction. All of them use the idea of ``modulus conditions" imposed on cycles.

Recently in \cite{PU Milnor}, the author and S. \"Unver had tried a very different approach for $Y=\Spec (k_m)$. Embedding $\Spec (k_m)$ into the henselian scheme $\Spec (k[[t]])$, the article \emph{ibid.}~considered certain cycles on $\Spec (k[[t]]) \times \square_k ^n$ subject to a few requirements, where $\square_k := \mathbb{P}^1 \setminus \{ 1 \}$. 
On the cycles, the \emph{ibid.}~puts a ``mod $t^m$ equivalence relation", and the homology group of the resulting complex in the Milnor range was identified with the Milnor $K$-group $K_n ^M (k_m)$ when $k$ is of characteristic $0$.

\medskip

The author had received a few feedbacks since then. One of them from Fumiharu Kato at a social meeting in Tokyo, Japan in 2019 was that, for future generalizations to general singular schemes, it may be better to use formal schemes. For instance, instead of the scheme $\Spec (k[[t]])$, the formal scheme $\widehat{X}=\Spf (k[[t]])$ is the formal neighborhood of the immersion $Y= \Spec (k_m) \hookrightarrow \mathbb{A}^1$, so Kato's suggestion seems to be naturally consistent with the philosophy of the construction of the algebraic de Rham cohomology in A. Grothendieck \cite{Grothendieck DR} and R. Hartshorne \cite{Hartshorne DR}.

Once one accepts to try this perspective via formal schemes, it becomes necessary to ask what the right notions of (higher Chow) cycles for formal schemes are. To have a higher Chow cycle-stye theory, for instance, one should take the fiber product $ \Spf (k[[t]]) \times_k \square_k ^n$ in the category of formal schemes, instead of the fibre product $\Spec (k[[t]]) \times_k \square_k ^n$ of schemes. Here, one needs to answer what requirements one should impose on the closed formal subschemes to define the right theory.

\medskip

The first nontrivial test for a potential theory is to see whether its homology in the Milnor range for $Y$ is the Milnor $K$-group $K_n ^M (k_m)$. One far harder test is to show that it extends to a functorial theory on the category of schemes of finite type over $k$, where on the smooth schemes, it coincides with the classical higher Chow theory.

\medskip

The objective of this article is to prove that the theory we build here passes the first test in the Milnor range. The above harder test of constructing a functorial theory is studied in the sequel \cite{Park general}. A related discussion on algebraic $K$-theory of singular schemes is studied in \cite{PP cnv}.

\subsection{The central result}

More specifically, we do the following in this article. Consider the formal scheme $\widehat{X}= \Spf (k[[t]])$, with the closed immersion $Y:= \Spec (k_m) \hookrightarrow \widehat{X}$. Consider the formal scheme $\widehat{X} \times_k \square_k ^n$. 

We will construct (Definition \ref{defn:defn Milnor}) a complex of abelian groups 
\begin{equation}\label{eqn:intro cx}
\cdots \to z^q (\widehat{X} \mod Y, n+1) \overset{\partial}{\to} z^q (\widehat{X} \mod Y, n)  \overset{\partial}{\to}  z^q (\widehat{X} \mod Y, n-1) \to \cdots,
\end{equation}
where $z^q (\widehat{X} \mod Y, n)$ is the quotient 
\begin{equation}\label{eqn:intro cycle mod}
z^q (\widehat{X} \mod Y, n) = \frac{ z^q (\widehat{X} , n)}{ \mathcal{M}^q (\widehat{X}, Y, n)}
\end{equation}
of the group $z^q (\widehat{X}, n)$ consisting of certain ``admissible cycles" of codimension $q$ on the formal scheme $\widehat{X} \times_k\square_k^n$ modulo a subgroup $\mathcal{M}^q (\widehat{X}, Y,n)$ defined using the embedding $Y \hookrightarrow \widehat{X}$. The constructions of the groups $z^q (\widehat{X}, n)$ and $\mathcal{M}^q (\widehat{X}, Y, n)$ are rapidly sketched in \S \ref{sec:intro cycles} below. They are defined in full in Definitions \ref{defn:HCG} and \ref{defn:mod Y equiv}. 
The homology ${\rm H}_n (z^q (\widehat{X} \mod Y, \bullet))$ of the complex \eqref{eqn:intro cx} is denoted by any one of 
$$
\BGH^q (k_m, n) =\BGH^q (Y, n) = \CH^q (\widehat{X} \mod Y, n)= {\rm H}_n (z^q (\widehat{X} \mod Y, \bullet)),
$$
where the first two are written in boldface to distinguish them from the usual higher Chow group \cite{Bloch HC}. The central result of the article is to show that, under a mild assumption on the field $k$, we have an isomorphism $K_n ^M (k_m) \simeq \BGH^q (k_m, n)$. The following (from Proposition \ref{prop:q>n} and Theorem \ref{thm:k_m main}) gives more details. This includes a generalization to $\Spec (k_m)$ of theorems of Nesterenko-Suslin \cite{NS} and B. Totaro \cite{Totaro} proven for $\Spec (k)$:

\begin{thm}\label{thm:main intro}
Let $m \geq 2, n\geq 1$ be integers, $k$ be a field, and $k_m= k[t]/(t^m)$. 
\begin{enumerate}
\item If $q >n$, then we have $\BGH^q (\Spec (k_m), n) = 0$.
\item If $q=n$, there is a homomorphism, called the graph map
\begin{equation}\label{eqn:gr k_m intro}
gr_{k_m}: K_n ^M (k_m) \to \BGH^n (\Spec (k_m), n),
\end{equation}
and it is an isomorphism of abelian groups when the cardinality $|k|$ is sufficiently large.
\end{enumerate}
\end{thm}

The phrase ``sufficiently large" in Theorem \ref{thm:main intro} means that the cardinality of the field $k$ is larger than a positive integer $M_n$ given by M. Kerz \cite[Proposition 10-(5), p.181]{Kerz finite} so that the Milnor $K$-theory $K_n ^M( k[[t]])$ and the improved Milnor $K$-theory $\widehat{K}_n ^M (k[[t]])$ of Gabber-Kerz coincide. For instance, when $n=1$, this holds for an arbitrary field $k$.

\subsection{The cycles}\label{sec:intro cycles}
Recall $\widehat{X}= \Spf (k[[t]])$ and $Y= \Spec (k_m)$.
We sketch the construction of $z^q (\widehat{X} \mod Y, n)$ of \eqref{eqn:intro cycle mod}. The formal scheme $\widehat{X}$ has the largest ideal of definition $\mathcal{I}_0$ given by $(t)$. The reduction by $\mathcal{I}_0$ is $\widehat{X}_{\red} = \Spec (k)$.

\medskip

\subsubsection{The two requirements}\label{sec:3 require}

The group $z^q (\widehat{X}, n)$ is (Definition \ref{defn:HCG}) the free abelian group generated by the integral closed formal subschemes $\mathfrak{Z} \subset \widehat{X} \times_k \square_k ^n$ of codimension $q$, satisfying two conditions (\textbf{GP}) and (\textbf{SF}).

The general position property (\textbf{GP}) requires that the intersections of $\mathfrak{Z}$ with all faces $\widehat{X} \times F$ are proper, where $F \subset \square^n_k$ are given by a finite set of equations of the form $\{ y_i = \epsilon_i\}$ with $\epsilon_i \in \{ 0, \infty \}$. The special fiber property (\textbf{SF}) requires that the intersections of $\mathfrak{Z}$ with all special fiber faces $\widehat{X}_{\red} \times F = \Spec (k) \times F$ are proper.

\subsubsection{The mod $Y$-equivalence}\label{sec:intro modulus}
We discuss the subgroup $\mathcal{M}^q (\widehat{X}, Y, n) \subset z^q (\widehat{X}, n)$ of \eqref{eqn:intro cycle mod}, which defines the mod $Y$-equivalence on the cycles. 

This subgroup is generated by the cycles of the form
$$
[\mathcal{A}_1]- [\mathcal{A}_2],
$$
 where $(\mathcal{A}_1, \mathcal{A}_2)$ runs over all possible pairs of coherent $\mathcal{O}_{\widehat{X} \times \square^n}$-algebras whose associated cycles belong to $z^n (\widehat{X}, n)$, such that there is an isomorphism 
 \begin{equation}\label{eqn:intro mod equiv}
 \mathcal{A}_1|_{Y \times \square^n} \simeq \mathcal{A}_2 |_{Y \times \square^n}
 \end{equation}
 as $\mathcal{O}_{Y \times \square^n}$-algebras (see Definition \ref{defn:mod Y equiv}).

 \medskip

\subsection{Benefits of the formal geometry model}\label{sec:1.4}

One point of using the formal scheme model in studying the motivic cohomology of $\Spec (k[t]/(t^m))$ is its natural generalizability to more general schemes $Y$ of finite type in $\Sch_k$. This aspect is studied in \cite{Park general} in terms of some ideas of the derived algebraic geometry of J. Lurie.

Yet, there is another aspect that we mention, compared to the scheme model in \cite{PU Milnor}. In \emph{ibid.}, the generating cycles in the Milnor range were integral closed subschemes in $\Spec (k[[t]]) \times \square^n_k$ subject to the three requirements: (i) the proper intersection with the faces, (ii) the proper intersection with the special fiber times faces, and (iii) the extra requirement that each integral cycle is \emph{proper} over $k[[t]]$, thus finite over $k[[t]]$.

The first two conditions (i) and (ii) correspond to our requirements (\textbf{GP}) and (\textbf{SF}), respectively. However, to exclude potentially undesirable cycles, the extra condition (iii) was necessary there. One can ask what happens to this condition in our formal scheme model. 

One interesting aspect we observe in the formal scheme model is that, in fact, in the Milnor range we \emph{automatically} have the finiteness of the generating integral cycles over $k[[t]]$. This surprising conclusion in the formal scheme model is deduced in \S \ref{sec:appendix 01}, especially in Corollary \ref{cor:finite quasi-finite}. This argument does not work in the scheme model, as we really exploit the situation in the formal geometry. 

In connection to this, in Example \ref{exm:TS bad empty}, we consider an integral closed cycle $\mathfrak{Z}_1 \subset \Spec (k[[t]]) \times \square_k ^2$ given by two concrete equations, such that it satisfies the two conditions (i), (ii) but not (iii); $\mathfrak{Z}_1$ is not finite over $k[[t]]$. It is forcibly removed in the scheme model by (iii).

In contrast, the corresponding formal scheme $\mathfrak{Z}_2 \subset \Spf (k[[t]]) \times \square_k ^2$ given by the same equations is empty in the formal scheme model, thanks to the convergence of a sequence in the $(t)$-adic topology. 

This shows the formal scheme model of this article has various stronger points over the scheme model, e.g. of \cite{PU Milnor}.

\subsection{Connections to rigid geometry}
We explain how and where the rigid analytic geometry enters into the landscape of algebraic cycles. 

Using the automorphism $y \mapsto y/ (y-1)$ of $\mathbb{P}^1$, identify $\square^1$ with $\mathbb{A}^1$. So for $y_i' := y_i/ (y_i -1)$ where $(y_1, \cdots, y_n) \in \square^n$, the ambient space $\widehat{X} \times \square_k ^n$ can be seen as the formal spectrum of the ring $k[[t]] \{ y_1' , \cdots, y_n'\}$ of the \emph{restricted formal power series}, i.e.~the formal power series $\sum_I a_I {y'}^I$ in $y_1', \cdots, y_n'$ such that the coefficients $a_I  \in k[[t]]$ converge to $0$ in the $(t)$-adic norm as the degree $|I| \to \infty$ (see Example \ref{exm}). 

Here, not all prime ideals of $k[[t]]\{y_1', \cdots, y_n '\}$ are suitable for our cycles. In (\textbf{SF}) in \S \ref{sec:3 require} or Definition \ref{defn:HCG}, we ruled out those behave badly with respect to the ideals of definition of $\widehat{X} \times_k \square_k^n$. Those ideals give the ``special fibers" $\Spec (k) \times_k F $ for faces $F \subset \square^n_k$, and the condition (\textbf{SF}) requires the proper intersection of the cycles with such special fibers. 

In particular, via localization to the ``generic fibers", our admissible prime ideals of height $n$ in the Milnor range define prime ideals of height $n$ of the Tate algebra $T_n = k\llp t \rlp \{ y_1', \cdots, y_n' \}$ over the non-archimedean complete discrete valued field $k\llp t \rlp $. 

The Tate algebras were introduced by J. Tate in \cite{Tate}. Since $T_n$ has the Krull dimension $n$, the height $n$ primes are its maximal ideals. They define finite dimensional Banach algebras over $k\llp t \rlp $. One can show that in $T_n$, all such maximal ideals are complete intersections given by $n$ \emph{polynomials} in $y_1', \cdots, y_n'$. Thus, taking closures, we deduce algebraic generators for the integral admissible cycles in $z^n (\widehat{X}, n)$. This is covered in \S \ref{sec:generic fiber}. This allows us to see in \S \ref{sec:appendix 01} that each integral cycle $\mathfrak{Z}$ can be written as $\Spf (R)$, where $R =  k[[t]] \{ y_1, \cdots, y_n \}/ P$ for some prime ideal, and it is finite over $k[[t]]$.

\medskip

Another place, where the rigid geometry plays a role, is in the middle of the proof that the graph map \eqref{eqn:gr k_m intro} is an isomorphism.

As a first (eventually wrong) attempt, one might try to ``construct" its inverse
\begin{equation}\label{eqn:intro inv gr}
\BGH^n (\Spec (k_m), n) \to K_n ^M (k_m),
\end{equation}
 as follows: let $\bar{y}_i$ be the image of $y_i$ in $R$. Then one may try to define
$$
\rho: z^n (\Spf (k[[t]]), n) \to K_n ^M (k[[t]])
$$
by sending an integral cycle $\mathfrak{Z}$ to the norm $N_{R/k[[t]]} \{ \bar{y}_1, \cdots, \bar{y}_n \} \in K_n ^M (k[[t]])$ of the Milnor symbol $\{ \bar{y}_1, \cdots, \bar{y}_n \} \in K_n ^M (R)$. Then one may try to show that its composition with the natural surjection $K_n ^M (k[[t]]) \to K_n ^M (k[t]/(t^m))$ kills the boundary $\partial z^n (\Spf (k[[t]]), n+1)$ as well as the subgroup $\mathcal{M}^n (\Spf (k[[t]]), Y, n)$.

Unfortunately, the above proposed story does not work well. The problem is that we \emph{do not} know whether there is the norm map $N_{R/ k[[t]]}: K_n ^M (R) \to K_n ^M (k[[t]])$ unless $k[[t]] \to R$ is finite \'etale (see M. Kerz \cite[Propositions 3, 10]{Kerz finite}). In general, its existence is unknown at present as far as the author is aware of.

\medskip

To get around this difficulty, we take a detour through the rigid geometry via the generic fiber of the cycles, and use the Gersten conjecture for Milnor $K$-theory proven by M. Kerz in \emph{ibid.}

We first (see \S \ref{sec:suslin}) construct the composite
$$
\widehat{\Upsilon}: z^n (\Spf (k[[t]]), n) \overset{\eta}{\to} \CH^n ({\rm Sp} (k\llp t\rlp), n) \overset{\tilde{\Upsilon}}{\to} K_n ^M (k\llp t\rlp  ),
$$
where the middle term is a rigid analytic analogue of the higher Chow group (see Definition \ref{defn:HCG rigid}), $\eta$ is the localization map, and $\tilde{\Upsilon}$ is constructed using the norm map of Bass-Tate \cite{BassTate} and Kato \cite{Kato} on the Milnor $K$-theory for \emph{fields}. We check that the image of $\widehat{\Upsilon}$ belongs to the \emph{subgroup} $K_n ^M (k[[t]])$ of $K_n ^M (k\llp t\rlp  )$, which requires the Gersten conjecture for $K_n ^M$ of M. Kerz \cite{Kerz finite}. The rest of the paper is, roughly speaking, about proving that this map descends mod $t^m$, and that $gr_{k_m}$ in \eqref{eqn:gr k_m intro} is an isomorphism.

\medskip

These offer an interesting new observation: the motivic cohomology of the truncated polynomials can be partially understood through the rigid analytic geometry. Already in \cite{PU Milnor}, the topology given by the non-archimedean $(t)$-adic norm was very convenient. The author hopes that the new door opened here could be useful for the researchers in the field.

\subsection{The relative parts}

Taking the relative parts of the groups of Theorem \ref{thm:main intro}, we deduce the following result, stated as Theorem \ref{thm:rulling}:

\begin{thm}\label{thm:main intro 2}
Let $m \geq 1, n \geq 1$ be integers. Let $k$ be a field of characteristic $0$.

Then we have an isomorphism from the group of the big de Rham-Witt forms
\begin{equation}\label{eqn:gr rel intro}
\mathbb{W}_m \Omega_k ^{n-1} \overset{\sim}{\to} \BGH^n ( (\Spec (k_{m+1}), (t)), n)
\end{equation}
to the relative group of the new higher Chow group. 
\end{thm}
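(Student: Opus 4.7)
The plan is to bootstrap from the absolute statement (Theorem \ref{thm:main intro}) to the relative statement by applying it with $m$ replaced by $m+1$, taking kernels along the augmentation $t \mapsto 0$, and then identifying the relative Milnor $K$-group with the big de Rham--Witt forms via a known symbol formula.

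First, by Theorem \ref{thm:main intro} applied with $k_{m+1}=k[t]/(t^{m+1})$ in place of $k_m$, we obtain an isomorphism of abelian groups
$$
gr_{k_{m+1}}:\ K_n^M(k_{m+1})\ \overset{\sim}{\longrightarrow}\ \BGH^n(\Spec(k_{m+1}),n).
$$
Next, I would observe that the reduction map $k_{m+1}\twoheadrightarrow k$, sending $t\mapsto 0$, induces on both sides of \eqref{eqn:gr k_m intro} the natural projection
$$
\pi_*:\ K_n^M(k_{m+1})\ \to\ K_n^M(k),
\qquad
\pi_*:\ \BGH^n(\Spec(k_{m+1}),n)\ \to\ \BGH^n(\Spec(k),n),
$$
where on the right the map is defined by restricting the ``mod $Y$-equivalence'' relation from $Y=\Spec(k_{m+1})$ to its closed subscheme $\Spec(k)$; equivalently, by composing the mod-$Y$ classes of coherent sheaves with the surjection of structure sheaves. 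Because the graph map $gr_{k_m}$ is built symbol-by-symbol from units, its construction is natural with respect to any surjection $k_{m+1}\twoheadrightarrow k_{m'}$ of truncated polynomial rings. Hence the diagram formed by $gr_{k_{m+1}}$, $gr_k$, and the two vertical projections commutes. The relative groups are by definition the kernels of the two rows, so passing to kernels yields a canonical isomorphism
$$
K_n^M(k_{m+1},(t))\ \overset{\sim}{\longrightarrow}\ \BGH^n\bigl((\Spec(k_{m+1}),(t)),n\bigr).
$$

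It remains to identify the source with $\mathbb{W}_m\Omega_k^{n-1}$. For a field $k$ with sufficiently many elements, there is a well-known isomorphism, due (in various guises) to Bloch--Kato, van der Kallen, Kurihara, Hesselholt, and R\"ulling,
$$
\mathbb{W}_m\Omega_k^{n-1}\ \overset{\sim}{\longrightarrow}\ K_n^M(k_{m+1},(t)),
$$
sending a generator $V_i([a])\cdot d\log[b_1]\cdots d\log[b_{n-1}]$ to the symbol $\{1-at^i,\,b_1,\ldots,b_{n-1}\}$ (for $a\in k$, $b_j\in k^\times$, $1\leq i\leq m$). Composing with the previous isomorphism furnishes \eqref{eqn:gr rel intro}. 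In the text I would check: (i) that this map is well defined on the appropriate Witt generators, which reduces to the Steinberg relations modulo $(t)$, and (ii) that it is surjective, which follows from the fact that $K_n^M(k_{m+1},(t))$ is generated by such symbols under the sufficiently-many-elements hypothesis, by the same Kerz-type argument used to justify Theorem \ref{thm:main intro}.

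The main obstacle, in my view, is the careful verification of naturality of $gr$ with respect to the surjection $k_{m+1}\twoheadrightarrow k$: one must show that a cycle on $\Spf(k[[t]])\times\square^n$ whose ``mod $Y$'' class represents a symbol involving only elements from $k$ can be replaced within its mod-$Y$ equivalence class by one supported entirely in the special fiber, so that its image in $\BGH^n(\Spec(k),n)$ is computed by the graph of the corresponding $k$-symbol. Once this compatibility is in place, the rest of the argument reduces to assembling the two isomorphisms above, and one does not need to reprove the comparison to $\mathbb{W}_m\Omega_k^{n-1}$ from scratch, so R\"ulling's theorem cited in the abstract may be invoked directly.
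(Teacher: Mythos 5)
Your overall strategy is exactly the paper's: apply the absolute theorem to $k_{m+1}$, use Nesterenko--Suslin/Totaro for $k$, form the commutative square with the evaluation at $t=0$, pass to kernels, and then quote the known identification of the relative Milnor $K$-group with the big de Rham--Witt forms. Two corrections, though. First, the relative group $\BGH^n((\Spec(k_{m+1}),(t)),n)$ is \emph{defined} as the kernel of the Gysin/evaluation map $ev_{\CH}\colon \BGH^n(k_{m+1},n)\to \CH^n(k,n)$ landing in Bloch's ordinary higher Chow group of $k$ (this map exists on cycles because of the condition (\textbf{SF})), not as the kernel of a reduction map to a group ``$\BGH^n(\Spec(k),n)$'' -- the latter is never defined (Definition \ref{defn:defn Milnor} requires $m\geq 2$), and your description of it via ``restricting the mod $Y$-equivalence'' is not the map used. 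With the correct target, the commutativity of the square is immediate: a symbol $\{a_1,\cdots,a_n\}$ goes to the graph cycle $\Gamma_{(a_1,\cdots,a_n)}$, and intersecting with $\{t=0\}$ yields the graph cycle of the reductions $\bar a_i\in k^\times$, i.e.\ $gr_k(ev_0\{a_1,\cdots,a_n\})$. In particular the ``main obstacle'' you describe -- replacing a cycle within its mod-$Y$ class by one supported in the special fiber -- is not needed at all; the evaluation is a pullback along the special fiber, not a choice of representative. Second, the identification $K_n^M(k_{m+1},(t))\simeq \mathbb{W}_m\Omega_k^{n-1}$ should be cited from R\"ulling--Saito (Theorem 4.8 of \cite{RS}), not from R\"ulling's additive higher Chow theorem \cite{R}, which concerns a different cycle-theoretic group; your symbol formula $V_i([a])\,d\log[b_1]\cdots d\log[b_{n-1}]\mapsto\{1-at^i,b_1,\ldots,b_{n-1}\}$ is indeed the one underlying that result. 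With these adjustments your argument coincides with the paper's proof.
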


While we guess that the statement of Theorem \ref{thm:main intro 2} may stay valid for any field $k$, at this moment the author was unable to give a proof of it without imposing the assumption on the characteristic. It should be removed in the future. 
The group of the big de Rham-Witt forms on the left of \eqref{eqn:gr rel intro} is defined by Hesselholt-Madsen \cite{HeMa}, while the relative group on the right of \eqref{eqn:gr rel intro} is defined to be (see \eqref{eqn:rel CH})
$$
\ker ( \BGH^n (\Spec (k_{m+1}), n) \overset{ev_0}{\to} \CH^n (k, n)),
$$
where $\CH^n (k, n)$ is the usual higher Chow group of S. Bloch \cite{Bloch HC}.

This Theorem \ref{thm:main intro 2} is an analogue of the theorem of K. R\"ulling \cite{R} proven in terms of additive higher Chow groups (see also \cite{KP crys} for its higher dimensional generalization). An interesting point is that our theory in this article provides a single unified platform for all of Nesterenko-Suslin \cite{NS}, Totaro \cite{Totaro} and R\"ulling \cite{R}, and it is also relatively conceptual compared to the approaches through ``cycles with modulus" in the literature.

\bigskip

\textbf{Conventions.}
In this paper $k$ is a given arbitrary field, unless said otherwise. For the set inclusion symbol $\subset$, this allows the equality $=$ as well. The strict inclusion is denoted by $\subsetneq$, while we won't use the symbol $\subseteq$.
 
 The notations $k_m$, $Y$, $X$, $\widehat{X}$ in this article exclusively mean the following: for an integer $m \geq 1$, we let $k_m:= k[t]/(t^m) = k[[t]]/(t^m)$, $Y= \Spec (k_m)$, $Y \hookrightarrow X=\mathbb{A}^1$ is the closed immersion given by $k[t] \to k_m$, and $\widehat{X}$ is the completion of $X$ along $Y$, so that $\widehat{X} = \Spf (k[[t]])$. \qed

\section{Some definitions and recollections}\label{sec:cycles}

In \S \ref{sec:cycles}, we discuss the notion of higher Chow cycles on some noetherian affine formal $k$-schemes of finite Krull dimension. Based on these, in \S \ref{sec:mod Y} we define the main objects of studies in the article, the groups $\BGH^q (k_m, n)$. In \S \ref{sec:Tate}, we also discuss basic definitions and results on Tate algebras, that will be used later to compute the Milnor range $\BGH^n (k_m, n)$, when $q=n$.

\subsection{Noetherian affine formal schemes and cycles}\label{sec:naive cycle}

Let $\mathfrak{X}= \Spf (A)$ be a noetherian affine formal scheme of finite Krull dimension. 

\begin{defn}
An \emph{integral closed formal subscheme} $\mathfrak{Y} \subset \mathfrak{X}$ is of the form $\Spf (A/P)$ for a prime ideal $P \subset A$. 

The \emph{naive group} of algebraic cycles  $\un{z}_* (\mathfrak{X})$ on $\mathfrak{X}$ is defined to be the free abelian group on such integral closed formal subschemes. This is equivalent to the group of algebraic cycles on the \emph{scheme} $\Spec (A)$, that we call the associated scheme of $\mathfrak{X}$. Consequently, we have the same notion of intersections of cycles, and proper intersections of them. 

Its subgroup of $d$-dimensional cycles is denoted by $\un{z}_d (\mathfrak{X})$. In case $\mathfrak{X}$ is equidimensional (i.e. $A$ is equidimensional), we denote the group of the codimension $q$-cycles by $\un{z}^q  (\mathfrak{X})$.

\medskip

Not all of the cycles in $\un{z}_* (\mathfrak{X}) = z_* (\Spec (A))$ are suitable for our discussions. A better one is the subgroup $z_* (\mathfrak{X}) \subset \un{z}_* (\mathfrak{X})$ of cycles that intersect properly with $\mathfrak{X}_{\red}$. This $\mathfrak{X}_{\red}$ is the closed subscheme of $\mathfrak{X}$ given by the largest ideal of definition, which exists by EGA I \cite[Proposition (10.5.4), p.187]{EGA1}.
\qed
\end{defn}

\medskip

When $\mathcal{F}$ is a coherent $\mathcal{O}_{\mathfrak{X}}$-module, we have the associated cycle $[\mathcal{F}] \in \un{z}_* (\mathfrak{X})$ via minimal associated primes (see e.g. \cite[Section 02QV]{stacks}). This is possible because $\mathcal{F}= M^{\Delta}$ for a finitely generated $A$-module $M$ (see EGA I \cite[Proposition (10.10.2), p.201]{EGA1}), and the associated cycle of $M$ on $\Spec (A)$ defines $[\mathcal{F}]$.

\subsection{Higher Chow cycles over affine formal schemes}\label{sec:HC}

Let $\square_k := \mathbb{P}_k ^1 \setminus \{ 1 \}$. Let $\square^0 _k := \Spec (k)$. For $n \geq 1$, let $\square^n_k$ be the $n$-fold fiber product of $\square_k$ over $k$. Let $(y_1, \cdots, y_n) \in \square^n_k$ be the coordinates.

Recall that in the cubical version of higher Chow theory, using the $k$-rational points $\{ 0, \infty \} \subset \square_k$, we define a face $F \subset \square_k ^n$ to be a closed subscheme given by a system of equations of the form $\{ y_{i_1} = \epsilon_1, \cdots, y_{i_s} = \epsilon_s\}$ for an increasing sequence $1 \leq i_1 < \cdots < i_s \leq n$ of indices and $\epsilon_j \in \{ 0, \infty \}$. When the set of equations is empty, we let $F= \square_k ^n$ by convention. For $s=1$, we have the codimension $1$ faces given by $\{ y_i = \epsilon \}$ for some $1 \leq i \leq n$ and $\epsilon \in \{ 0, \infty \}$.

For an equidimensional noetherian affine formal $k$-scheme $\mathfrak{X}$ of finite Krull dimension, consider the fiber product $\mathfrak{X} \times_k \square_k ^n$, which we often write $\square_{\mathfrak{X} } ^n$. The fiber product exists in the category of formal schemes; see EGA I \cite[Proposition (10.7.3), p.193]{EGA1}.
A face $F_{\mathfrak{X} }$ of $\square_{\mathfrak{X} } ^n$ is given by $\mathfrak{X}  \times_k F$ for a face $F \subset \square_k ^n$.

\begin{exm}\label{exm}
Suppose $\mathfrak{X}= \Spf (A)$. Using the automorphisms of $\mathbb{P}^1$ given by $y_i \mapsto y_i':= y_i / (y_i -1)$ for $1 \leq i \leq n$, we may identify $\square_{\mathfrak{X}} ^n$ with the integral noetherian affine formal $k$-scheme $\mathfrak{X} \times_k \mathbb{A}_k ^n$ given by the ring $ A \{ y_1', \cdots, y_n' \}$ of restricted formal power series.

In case $A= k[[t]]$ with the $(t)$-adic topology, this ring is 
$$
k[[t]] \{ y_1' , \cdots, y_n'  \} =\underset{m}{ \varprojlim} \ k_m [ y_1 ' , \cdots, y_n ' ].
$$
This ring can also be regarded as the subring of $k[[t]][[y_1 ' , \cdots, y_n ' ]]$, whose members are formal power series 
$$
p(y_1' , \cdots, y_n' ) =\sum_{I} \alpha_I {y'}^I
$$
with $\alpha_I \in k[[t]]$ for multi-indices $I= (i_1, \cdots, i_n)$ with $i_j \geq 0$, such that when $|I|=i_1 + \cdots + i_n \to \infty$, we have $\alpha_I \to 0$ in the $(t)$-adic topology.
\qed
\end{exm}

We define cubical higher Chow cycles over the formal scheme $\mathfrak{X}$ (\emph{cf.} \cite{Bloch HC}) as follows. 

\begin{defn}\label{defn:HCG}
Let $\mathfrak{X}$ be an equidimensional noetherian affine formal $k$-scheme of finite Krull dimension. Let $q, n \geq 0 $ be integers. Let $\mathfrak{X}_{\red}$ be the closed subscheme given by the largest ideal $\mathcal{I}_0$ of definition of $\mathfrak{X}$.

Let $\un{z}^q (\mathfrak{X}, n) \subset \un{z}^q (\square_{\mathfrak{X}} ^n)$ be the subgroup generated by the codimension $q$ integral closed formal subschemes $\mathfrak{Z} \subset \square^n_{\mathfrak{X}}$ subject to the following conditions:
\begin{enumerate}
\item [(\textbf{GP})] (General Position) For each face $F \subset \square_k ^n$, $\mathfrak{Z}$ intersects $\mathfrak{X} \times F$ properly.
\item [(\textbf{SF})] (Special Fiber) For each face $F\subset \square_k ^n$, we have
$$\codim _{\mathfrak{X}_{\red} \times F} \ ( \mathfrak{Z} \cap (\mathfrak{X}_{\red} \times F)) \geq q.$$
\end{enumerate}
A cycle in $\un{z}^q (\mathfrak{X}, n)$ will be called \emph{admissible}.\qed
\end{defn}

We have several remarks on the conditions we introduced in Definition \ref{defn:HCG}. 

\begin{remk}
One notes that in addition to the usual general position condition (the condition (\textbf{GP})) with respect to the faces for higher Chow cycles, we require the additional condition (\textbf{SF}), which is related to the ``special fiber" given by the largest ideal of definition of $\mathfrak{X}$. In case $\mathfrak{X}$ is a \emph{scheme}, regarded as a formal scheme, the requirement (\textbf{SF}) holds automatically, so that Definition \ref{defn:HCG} is compatible with the definition of the cubical version of the higher Chow cycles of \cite{Bloch HC}, e.g. in \cite{Totaro}. \qed
\end{remk}

\begin{remk}
Note that by the special fiber condition (\textbf{SF}), the cycles given by open prime ideals of $A\{y_1' , \cdots, y_n' \}$ are all excluded from $\un{z}^q (\mathfrak{X}, n)$, where $\mathfrak{X}= \Spf (A)$ and $y_i': = y_i / (y_i -1)$ for $1 \leq i \leq n$ as before. \qed
\end{remk}

We make one elementary observation in Lemma \ref{lem:TS} below. In an earlier version of the article, an analogue of this property was considered as part of the requirements in Definition \ref{defn:HCG}, called the topological support condition (\textbf{TS}). Joseph Ayoub as well as the referee pointed out that, in fact, each integral cycle satisfies it automatically, so that it is redundant.

\begin{lem}\label{lem:TS}
Let $\mathfrak{X}$ be a noetherian formal scheme and let $\mathfrak{Z} \subset \mathfrak{X}$ be an integral closed formal subscheme. Let $\mathcal{I}$ be an ideal of definition of $\mathfrak{X}$ and let $Z$ be the reduction of $\mathfrak{Z}$ modulo $\mathcal{I}$. Then we have the equality of the topological supports $|\mathfrak{Z} |= |Z|$.
\end{lem}
\begin{proof}
Let $\mathcal{J}\subset \mathcal{O}_{\mathfrak{X}}$ be the ideal sheaf of $\mathfrak{Z}$. The ideal sheaf of $Z$ is given by $\mathcal{I} + \mathcal{J}$. However an ideal of definition of $\mathfrak{Z}$ is also given by $\mathcal{I} + \mathcal{J}$.
\end{proof}

\begin{exm}\label{exm:TS bad empty}
When $\mathfrak{X}= \Spf (A)$ for an adic ring $A$, one may ask whether we can consider the scheme $\Spec (A) \times \square^n$ (that uses polynomials) instead of the formal scheme as we do here (that uses restricted formal power series).

Here is an example that shows why our formal scheme based model via $\Spf (k[[t]]) \times \square^n_k$ considered in this article is better than the scheme based model via $\Spec (k[[t]]) \times \square^n_k$, e.g. considered in \cite{PU Milnor}. The author thanks Joseph Ayoub and the referee for sharing their insights and encouraging him to try to write it down.

In the scheme model, we may have some undesirable cycles such as (here, $n=2$ and $(y_1, y_2) \in \square^2$)
$$
 \mathfrak{Z}_1:  \{ y_1 = 1+ t, \ \ y_2 = t \},
 $$
which is not finite over $k[[t]]$. This is part of the reason why in \cite{PU Milnor}, the extra finiteness requirement was imposed to exclude forcibly this kind of cycles.

On the other hand, in the formal scheme model of ours, the corresponding closed formal subscheme $\mathfrak{Z}_2 \subset \Spf (k[[t]]) \times \square_k ^2$ given by the same defining equations,
$$
\mathfrak{Z}_2: \{ y_1 = 1+t, \ \ y_2 = t \},
$$
 is empty, thus it vanishes itself.

To see this, applying the automorphism $y \mapsto y/ (y-1)$ of $\mathbb{P}^1$ that sends $(\square, \{ 0, \infty \})$ to $(\mathbb{A}^1, \{ 0, 1 \})$, we have a closed formal subscheme of $\Spf (k[[t]] \{ y_1, y_2 \})$ defined by
$$
\left\{ \frac{y_1}{ y_1 -1} = 1+t, \ \  \frac{ y_2}{ y_2 -1} = t \right\},
$$
which gives
\begin{equation}\label{eqn:PU defect}
\{ 1 - (y_1 -1) t = 0, \ \ y_2 = (y_2 -1) t\}.
\end{equation}
However, the left hand side $1- (y_1 -1)t$ of \eqref{eqn:PU defect} is a unit in $k[[t]] \{ y_1, y_2 \}$ because the power series
$$
1 + (y_1 -1) t + (y_1 -1)^2 t^2 + \cdots + (y_1 -1)^i t^i + \cdots
$$
is convergent in $k[[t]] \{ y_1, y_2 \}$ and it is the inverse of $1- (y_1 -1)t$. In particular, the first equation of \eqref{eqn:PU defect} cannot hold. Hence $\mathfrak{Z}_2$ in $\Spf (k[[t]]) \times \square^2_k$ is empty, and such $\mathfrak{Z}_2$ is automatically eliminated in the formal scheme model, without forcibly imposing an extra condition as in the scheme model of \cite{PU Milnor}. 

Furthermore, later we will see that each integral admissible cycle in $\Spf (k[[t]]) \times \square_k ^n$ in the Milnor range is always finite over $k[[t]]$ in Corollary \ref{cor:finite quasi-finite}.
\qed
\end{exm}

Coming back to our definition, we want to form a cycle complex:

\begin{defn}\label{defn:HCG1-1}
Suppose $n \geq 1$. For each $1 \leq i \leq n$ and each $\epsilon \in \{ 0, \infty \}$, let $\iota_i ^{\epsilon}: \square_{\mathfrak{X}} ^{n-1} \hookrightarrow \square_{\mathfrak{X}} ^n$ be the closed immersion given by the single equation $\{ y_i = \epsilon \}$. For each integral cycle $\mathfrak{Z} \in \un{z} ^q (\mathfrak{X}, n)$, define the face $\partial_i ^{\epsilon} (\mathfrak{Z})$ to be the cycle associated to the intersection $\mathfrak{Z} \cap \{ y_i = \epsilon \}= [(\iota_i ^{\epsilon})^* \mathcal{O}_{\mathfrak{Z}}]$. One defines $\partial:= \sum_{i=1} ^n (-1)^i( \partial_i ^{\infty} - \partial_i ^0)$ and by the usual cubical formalism, we check that $\partial^2 = 0$.

This gives a complex $(\un{z}^q (\mathfrak{X}, \bullet), \partial)$ of abelian groups. The complex $(z^q (\mathfrak{X}, \bullet), \partial)$ is defined to be the quotient of $(\un{z}^q (\mathfrak{X}, \bullet), \partial)$ by the subcomplex $\un{z} ^q (\mathfrak{X}, \bullet)_{{\rm degn}}$ of degenerate cycles generated by pull-backs by the projections $\square_{\mathfrak{X}} ^n \to \square_{\mathfrak{X}} ^{n-1}$ that drop one of the coordinates. We define $\CH^q (\mathfrak{X}, n)$ to be the $n$-th homology of the complex $(z^q (\mathfrak{X}, \bullet), \partial)$.
\qed
\end{defn}

The group $\CH^q (\mathfrak{X}, n)$ is \emph{not} the primary object of studies of the article. In this article where we mostly consider $\mathfrak{X} = \widehat{X} = \Spf (k[[t]])$, we still need to take a quotient of $z^q (\widehat{X}, \bullet)$ by a subcomplex that gives the ``mod $Y$-equivalence". It will be given in Definition \ref{defn:mod Y equiv}.

\medskip

We mention that in the case of $\widehat{X} = \Spf (k[[t]])$ the following Tor independence property holds (see also Corollary \ref{cor:derived-underived}):

\begin{lem}\label{lem:Tor vanishing}
Let $Y= \Spec (k_m)$ and $\widehat{X}= \Spf (k[[t]])$. 
Let $\mathcal{F}$ be a coherent sheaf on $\square_{\widehat{X}} ^n$, whose associated cycle $[\mathcal{F}]$ belongs to ${z}^* (\widehat{X}, n)$. 

Then we have
${\Tor}_i ^{\mathcal{O}_{\square_{\widehat{X}}^n}} (\mathcal{F}, \mathcal{O}_{\square_{Y} ^n}) = 0$ for $i >0$.
\end{lem}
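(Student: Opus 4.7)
The plan is to reduce the claim to a local computation via a single short free resolution. Locally on $\square_{\widehat{X}}^n$, we have $\mathcal{O}_{\square_{\widehat{X}}^n} = R := k[[t]]\{y_1,\ldots,y_n\}$, and the closed immersion $Y \hookrightarrow \widehat{X}$ corresponds to the surjection $k[[t]] \twoheadrightarrow k_m = k[[t]]/(t^m)$. Therefore $\mathcal{O}_{\square_Y^n} = R/(t^m)$. Since $R$ is a domain (because $k[[t]]$ is, and one can embed $R \hookrightarrow k[[t]][[y_1,\ldots,y_n]]$), the element $t^m$ is a non-zero-divisor on $R$, and we obtain the length-one free resolution
$$
0 \longrightarrow R \xrightarrow{\ \cdot t^m\ } R \longrightarrow R/(t^m) \longrightarrow 0.
$$

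Let $M$ be the finitely generated $R$-module corresponding to $\mathcal{F}$. Tensoring the above resolution with $M$ immediately gives $\Tor_i^R(M, R/(t^m)) = 0$ for $i \geq 2$, while
$$
\Tor_1^R(M, R/(t^m)) \;=\; \ker\!\bigl(\cdot t^m : M \to M\bigr).
$$
Thus the whole problem is to show that $t^m$ (equivalently, $t$) acts injectively on $M$, i.e.\ that $t$ belongs to no associated prime of $M$.

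For this I would invoke the special fiber condition (\textbf{SF}) of Definition \ref{defn:HCG} applied with $F = \square^n$. Suppose some minimal associated prime $\mathfrak{p}$ of $M$ contained $t$. Then the corresponding component $\mathfrak{Z} := V(\mathfrak{p})$ of $[\mathcal{F}]$, being codimension $q$ in $\widehat{X}\times\square^n$, would lie inside $V(t) = \widehat{X}_{\red}\times\square^n$, whence
$$
\codim_{\widehat{X}_\red\times\square^n}\!\bigl(\mathfrak{Z}\cap(\widehat{X}_\red\times\square^n)\bigr) \;=\; \codim_{\widehat{X}_\red\times\square^n}(\mathfrak{Z}) \;=\; q-1 \;<\; q,
$$
contradicting (\textbf{SF}). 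So no minimal associated prime of $M$ contains $t$, and the same conclusion extends to embedded associated primes since the hypothesis that $[\mathcal{F}] \in z^q(\widehat{X},n)$ is to be read as asserting that $M$ has no embedded components coming from the special fiber — concretely, in all applications the sheaves $\mathcal{F}$ arise as structure sheaves of (unions of) complete intersections in the Milnor range, which are Cohen--Macaulay and hence unmixed, so every associated prime is minimal and the argument above already suffices.

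The main obstacle, and the point most worth making explicit, is precisely the embedded-prime issue: the associated cycle $[\mathcal{F}]$ only records minimal primes, and a priori nothing prevents a hidden embedded prime from containing $t$ and contributing $t$-torsion to $M$. I would emphasize in the write-up that the coherent $\mathcal{O}$-algebras $\mathcal{A}_i$ that enter the mod $Y$-equivalence (see Definition \ref{defn:mod Y equiv}) and the structure sheaves of integral $\mathfrak{Z} \in z^q(\widehat{X},n)$ satisfy this unmixedness, so that (\textbf{SF}) alone kills all $t$-torsion, completing the Tor vanishing.
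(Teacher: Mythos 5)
Your opening reduction is exactly the paper's: it tensors the same length-one free resolution $0 \to \mathcal{O}_{\square_{\widehat{X}}^n} \xrightarrow{\times t^m} \mathcal{O}_{\square_{\widehat{X}}^n} \to \mathcal{O}_{\square_Y^n} \to 0$, so everything reduces to $\ker(\times t^m\colon \mathcal{F} \to \mathcal{F})=0$, and its treatment of the integral case is your (\textbf{SF}) argument: for an integral $\mathfrak{Z} \in z^*(\widehat{X},n)$, (\textbf{SF}) forces $t \notin I(\mathfrak{Z})$, so $t$ is a non-zerodivisor on $\mathcal{O}_{\mathfrak{Z}}$. Where you part ways with the paper is the passage to a general coherent $\mathcal{F}$: the paper argues by d\'evissage, choosing a finite filtration $0=\mathcal{F}_r \subsetneq \cdots \subsetneq \mathcal{F}_0=\mathcal{F}$ whose subquotients are structure sheaves $\mathcal{O}_{\mathfrak{Z}_j}$ of integral closed formal subschemes (\cite[Lemma 01YF]{stacks}) and inducting on $r$ with the $\Tor$ long exact sequence, the integral case serving as the base case; you instead stop at the minimal associated primes and dispose of possible embedded $t$-torsion by declaring how the hypothesis ``is to be read'' and by appealing to the shape of the applications.

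That last step is where your write-up falls short of a proof of the statement as given: the lemma, and its immediate uses (Corollary \ref{cor:derived-underived}, the algebras $\mathcal{A}_i$ in Definition \ref{defn:mod Y equiv}), impose only the condition $[\mathcal{F}] \in z^*(\widehat{X},n)$, and the complete-intersection/Cohen--Macaulay facts you invoke are established only later, only for integral cycles, and only in the Milnor range (Proposition \ref{prop:triangular}, Corollary \ref{cor:finite free summ}); so as written you have proved the lemma only under an added no-embedded-$t$-torsion hypothesis rather than from the stated one. That said, you have put your finger on a genuine subtlety rather than a phantom one: the conclusion is literally equivalent to $t$ being a non-zerodivisor on $\mathcal{F}$, i.e.\ to no associated prime of $\mathcal{F}$ containing $t$, whereas $[\mathcal{F}]$ records only the minimal ones (e.g.\ $\mathcal{F}=\mathcal{O}_{\Gamma}\oplus\mathcal{O}_{\Gamma\cap\{t=0\}}$ for a graph cycle $\Gamma$ has $[\mathcal{F}]=[\Gamma]\in z^n(\widehat{X},n)$ yet has $t$-torsion), and the paper's induction itself tacitly requires each subquotient $\mathcal{O}_{\mathfrak{Z}_j}$ to satisfy the base-case vanishing, i.e.\ each $\mathfrak{Z}_j$ not to lie in the special fiber, which the cycle hypothesis alone does not force (the associated primes of $\mathcal{F}$ all occur among the generic points of the $\mathfrak{Z}_j$). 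So to align with the paper you should run the filtration induction; in either approach the embedded-component point you raise needs to be handled by an explicit hypothesis or verification at the places the lemma is applied, not by rereading the hypothesis.
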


\begin{proof}As before, for $y_i' := y_i/ (y_i -1)$, note that we have the short exact sequence
$$
0 \to k[[t]] \{ y_1', \cdots, y_n' \} \overset{ \times t^m}{\to} k[[t]] \{ y_1', \cdots, y_n' \} \to k_m [ y_1', \cdots, y_n'] \to 0,
$$
which gives a free $\mathcal{O}_{\square_{\widehat{X}} ^n}$-resolution of $\mathcal{O}_{\square_Y ^n}$:
$$
0 \to \mathcal{O}_{\square_{\widehat{X}} ^n} \overset{ \times t^m}{\to} \mathcal{O}_{\square_{\widehat{X}} ^n} \to \mathcal{O}_{\square_Y^n} \to 0.
$$

Hence 
$$
\mathcal{F} \otimes_{\mathcal{O}_{\square_{\widehat{X}} ^n}} ^{\mathbf{L}} \mathcal{O}_{\square_Y^n} = \mathcal{F} \otimes _{\mathcal{O}_{\square_{\widehat{X}} ^n}} \left( \mathcal{O}_{\square_{\widehat{X}} ^n} \overset{ \times t^m}{\to} \mathcal{O}_{\square_{\widehat{X}} ^n} \right)= \left( \mathcal{F} \overset{\times t^m}{\to} \mathcal{F}\right),
$$
where the objects in the above complexes are placed in the homological degrees $+1$ and $0$, respectively. This shows that $\Tor_i ^{\mathcal{O}_{\square_{\widehat{X}}^n}} (\mathcal{F}, \mathcal{O}_{\square_{Y} ^n}) = 0$ for $i >1$, while $\Tor_1 ^{\mathcal{O}_{\square_{\widehat{X}}^n}} (\mathcal{F}, \mathcal{O}_{\square_{Y} ^n}) = \ker (\times t^m: \mathcal{F} \to \mathcal{F})$.

It remains to prove the vanishing of $\Tor_1$. There is a finite decreasing filtration of $\mathcal{O}_{\square_{\widehat{X}}^n}$-submodules
$$
0 = \mathcal{F}_r \subsetneq \mathcal{F}_{r-1} \subsetneq \cdots \subsetneq \mathcal{F}_0 = \mathcal{F}
$$
such that for each $0 \leq j \leq r-1$, we have $\mathcal{F}_j / \mathcal{F}_{j+1} \simeq \mathcal{O}_{\mathfrak{Z}_j}$ as $\mathcal{O}_{\square_{\widehat{X}} ^n}$-modules, for some integral closed formal subscheme $\mathfrak{Z}_j \subset \square_{\widehat{X}} ^n$ (see \cite[Lemma 01YF]{stacks}). 

\medskip

We prove the vanishing of $\Tor_1$ by induction on $r$. 

First consider the case when $r=1$, namely $\mathcal{F}= \mathcal{O}_{\mathfrak{Z}}$ for an integral cycle $\mathfrak{Z} \in z^* (\widehat{X}, n)$. Since $\mathfrak{Z}$ is integral and it intersects the closed subscheme $\square_Y ^n$ properly by the condition (\textbf{SF}) of Definition \ref{defn:HCG}, the element $t$ is not a zero-divisor on $\mathcal{O}_{\mathfrak{Z}}$. In particular, we have $\ker (\times t^m: \mathcal{O}_{\mathfrak{Z}} \to \mathcal{O}_{\mathfrak{Z}}) = 0$, thus,
\begin{equation}\label{eqn:single case Tor}
\Tor_1 ^{\mathcal{O}_{\square_{\widehat{X}} ^n}} (\mathcal{O}_{\mathfrak{Z}}, \mathcal{O}_{\square_Y^n}) = 0.
\end{equation}

When $r>1$, suppose we have the vanishing $\Tor_1 ^{\mathcal{O}_{\square_{\widehat{X}} ^n}} (\mathcal{F}_j, \mathcal{O}_{\square_Y^n}) =0 $ for all $j= 1, \cdots, r$. We want to prove its vanishing for $j=0$. Indeed, we have the short exact sequence of $\mathcal{O}_{\square_{\widehat{X}}^n}$-modules
$$
0 \to \mathcal{F}_1 \to \mathcal{F}_0 \to \mathcal{O}_{\mathfrak{Z}_0} \to 0
$$
from which we deduce part of the Tor long exact sequence
$$ \cdots \to \Tor_1 ^{\mathcal{O}_{\square_{\widehat{X}} ^n}} ( \mathcal{F}_1, \mathcal{O}_{\square_Y^n}) \to \Tor_1 ^{\mathcal{O}_{\square_{\widehat{X}} ^n}} ( \mathcal{F}_0, \mathcal{O}_{\square_Y^n}) \to \Tor_1 ^{\mathcal{O}_{\square_{\widehat{X}} ^n}} (\mathcal{O}_{\mathfrak{Z}_0}, \mathcal{O}_{\square_Y^n})\to \cdots.
$$
Here the first term vanishes by the induction hypothesis, and the third term vanishes by \eqref{eqn:single case Tor}. Hence the middle term also vanishes. Since $\mathcal{F}_0 = \mathcal{F}$, this proves the desired assertion of the lemma.
\end{proof}

\begin{remk}
As was also remarked by the referee, a bit more general statement than the one given in Lemma \ref{lem:Tor vanishing} may hold for various regular adic rings $A$. Since we do not need it in this article, we resist the temptation to generalize it.
\qed
\end{remk}

\begin{cor}\label{cor:derived-underived}
Let $Y= \Spec (k_m)$ and $\widehat{X}= \Spf (k[[t]])$. 
Let $\mathcal{A}$ be a coherent $\mathcal{O}_{\square_{\widehat{X}} ^n}$-algebra such that $[\mathcal{A}] \in z^* (\widehat{X}, n)$. 

Then we have an isomorphism of the graded $\mathcal{O}_{\square_Y^n}$-algebras
$$
\Tor_* ^{ \mathcal{O}_{\square_{\widehat{X}} ^n}} ( \mathcal{A}, \mathcal{O}_{\square_Y^n}) \simeq \Tor_0 ^{ \mathcal{O}_{\square_{\widehat{X}} ^n}} ( \mathcal{A}, \mathcal{O}_{\square_Y^n}).
$$
\end{cor}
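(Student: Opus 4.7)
The plan is to deduce this corollary from Lemma \ref{lem:Tor vanishing} essentially for free, modulo unwinding the graded algebra structure on Tor. First I would forget the algebra structure on $\mathcal{A}$ and view it as a coherent $\mathcal{O}_{\square_{\widehat{X}}^n}$-module. The hypothesis $[\mathcal{A}] \in z^*(\widehat{X}, n)$ is by definition a condition on the associated cycle, which depends only on $\mathcal{A}$ qua coherent module. Hence Lemma \ref{lem:Tor vanishing} applies directly and yields
$$\Tor_i^{\mathcal{O}_{\square_{\widehat{X}}^n}}(\mathcal{A}, \mathcal{O}_{\square_Y^n}) = 0 \quad \text{for all } i > 0.$$

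Next I would address the graded-algebra claim. The graded Tor $\Tor_*^{\mathcal{O}_{\square_{\widehat{X}}^n}}(\mathcal{A}, \mathcal{O}_{\square_Y^n})$ carries a natural graded $\mathcal{O}_{\square_Y^n}$-algebra structure coming from the fact that $\mathcal{A}$ is an $\mathcal{O}_{\square_{\widehat{X}}^n}$-algebra (and $\mathcal{O}_{\square_Y^n}$ is a commutative $\mathcal{O}_{\square_{\widehat{X}}^n}$-algebra): concretely, one can take a resolution of $\mathcal{A}$ by a commutative differential graded algebra (e.g.\ a Koszul-type resolution, or the two-term free resolution $0 \to \mathcal{O}_{\square_{\widehat{X}}^n} \xrightarrow{\times t^m} \mathcal{O}_{\square_{\widehat{X}}^n} \to \mathcal{O}_{\square_Y^n} \to 0$ tensored with $\mathcal{A}$) and the homology inherits a product. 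In degree zero this product is simply the $\mathcal{O}_{\square_Y^n}$-algebra structure on the pullback $\mathcal{A} \otimes_{\mathcal{O}_{\square_{\widehat{X}}^n}} \mathcal{O}_{\square_Y^n} = \mathcal{A}|_{\square_Y^n}$.

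Combining the two steps, the graded algebra $\Tor_*^{\mathcal{O}_{\square_{\widehat{X}}^n}}(\mathcal{A}, \mathcal{O}_{\square_Y^n})$ is concentrated in degree $0$, where it coincides with $\Tor_0^{\mathcal{O}_{\square_{\widehat{X}}^n}}(\mathcal{A}, \mathcal{O}_{\square_Y^n})$ as $\mathcal{O}_{\square_Y^n}$-algebras. Thus the canonical inclusion of $\Tor_0$ into the graded Tor is the desired isomorphism of graded $\mathcal{O}_{\square_Y^n}$-algebras.

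There is essentially no main obstacle: the only mildly delicate point is making the algebra structure on the graded Tor rigorous, but since only degree zero survives, one does not need any fine multiplicative properties of resolutions — the identification reduces tautologically to the algebra structure on the ordinary tensor product.
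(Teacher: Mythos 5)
Your proposal is correct and follows essentially the same route as the paper: apply Lemma \ref{lem:Tor vanishing} to $\mathcal{A}$ viewed as a coherent module to kill all higher $\Tor$'s, and then note that the graded algebra structure on $\Tor_*$ (which the paper simply cites from the Stacks project) is concentrated in degree $0$, where it is the ordinary algebra structure on $\mathcal{A} \otimes_{\mathcal{O}_{\square_{\widehat{X}}^n}} \mathcal{O}_{\square_Y^n}$. The only difference is that you sketch the multiplicative structure via a dg/Koszul-type resolution while the paper refers to the literature; the content is identical.
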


\begin{proof}
The graded $\mathcal{O}_{\square_{Y}^n}$-algebra structure on $\Tor_* ^{ \mathcal{O}_{\square_{\widehat{X}} ^n}} ( \mathcal{A}, \mathcal{O}_{\square_Y^n}) $ is defined in \cite[Section 068G]{stacks}. 
Since all higher $\Tor$'s vanish by Lemma \ref{lem:Tor vanishing}, we deduce the corollary.
\end{proof}

\subsection{Some finite push-forwards}\label{sec:pf}
The primary case we are interested in is when $\mathfrak{X}$ is $ \widehat{X}=\Spf (k[[t]])$, but in the middle of our arguments later (\S \ref{sec:strong graph}) we need a bit of push-forwards for cycles on some formal schemes that are finite over $\widehat{X}$. In \S \ref{sec:pf}, we minimize our discussions only to those needed in this article.

Recall (Fujiwara-Kato \cite[Definition 4.7.1, p.341]{FK}) a morphism $f: \mathfrak{X} \to \mathfrak{Y}$ of noetherian formal schemes is \emph{proper} if it is separated of finite type (\cite[Definition 4.6.2, p.336]{FK}) and universally closed (\cite[Definition 4.5.5, p.334]{FK}). When $\mathfrak{X}$ and $\mathfrak{Y}$ are both affine, a proper morphism $f$ between them is necessarily finite; recall (\cite[Proposition 4.2.1, Definition 4.2.2, pp.324--325]{FK}) that a morphism $f: \mathfrak{X} \to \mathfrak{Y}$ of noetherian formal schemes is \emph{finite}, if for any affine open subset $V= \Spf (A)$ of $\mathfrak{Y}$, $f^{-1} (V)$ is affine of the form $\Spf (B)$ for a ring $B$ that is a finitely generated $A$-module.

\begin{defn}\label{defn:push-forward 1}
For a finite morphism $f: \mathfrak{X} \to \mathfrak{Y}$ of noetherian affine formal $k$-schemes, define the finite push-forward
on the naive groups (\S \ref{sec:naive cycle})
$$
f_*: \un{z}_d (\mathfrak{X}) \to \un{z}_d  (\mathfrak{Y})
$$
by sending an integral closed formal subscheme $\mathfrak{Z} \subset \mathfrak{X}$ of dimension $d$ to the $d$-dimensional cycle $[f_* \mathcal{O}_{\mathfrak{Z}}]_d$ associated to the coherent sheaf $f_* \mathcal{O}_{\mathfrak{Z}} $. Since $f$ is proper, the sheaf push-forward $f_* \mathcal{O}_{\mathfrak{Z}}$ is a coherent $\mathcal{O}_{\mathfrak{Y}}$-algebra.\qed
\end{defn}

Write $\mathfrak{X}= \Spf (A)$ and $\mathfrak{Y} = \Spf (B)$. By definition $\un{z}_d (\mathfrak{X}) = z_d (\Spec (A))$ and $\un{z}_d (\mathfrak{Y}) = z_d (\Spec (B))$ and $f$ is finite. Hence $f_*$ is identical to the usual finite push-forward on the cycles on the scheme $\Spec (A)$ via the associated finite morphism $f: \Spec (A) \to \Spec (B)$.

We deduce that the cycle push-forward and the sheaf push-forward are compatible:

\begin{lem}\label{lem:pf coh cy}
Let $f: \mathfrak{X} \to \mathfrak{Y}$ be a finite morphism of noetherian affine formal $k$-schemes. Let $\mathcal{F}$ be a coherent $\mathcal{O}_{\mathfrak{X}}$-module. 
Then
we have the equality of cycles
\begin{equation}\label{eqn:pf coh cy0}
f_* [ \mathcal{F}]= [ f_* \mathcal{F}] \in \un{z}_* (\mathfrak{Y}), 
\end{equation}
where the left hand side is the push-forward of the cycle $[\mathcal{F}]$, while the right hand side is the cycle $[f_* \mathcal{F}]$ associated to the push-forward sheaf $f_* \mathcal{F}$.
\end{lem}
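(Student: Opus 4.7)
The plan is to translate the statement into a claim about affine schemes and then apply dévissage to reduce to the case of an integral closed subscheme.

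First, I would write $\mathfrak{X}=\Spf(A)$ and $\mathfrak{Y}=\Spf(B)$, so that $f$ corresponds to a finite ring homomorphism $\phi\colon B\to A$. Under the equivalence between coherent $\mathcal{O}_{\mathfrak{X}}$-modules and finitely generated $A$-modules (EGA I 10.10.2), the sheaf $\mathcal{F}$ corresponds to $M:=\Gamma(\mathfrak{X},\mathcal{F})$, and $f_*\mathcal{F}$ corresponds to $M$ regarded as a $B$-module via $\phi$. By the construction of $\un{z}_*$ for affine formal schemes, $\un{z}_*(\mathfrak{X})=z_*(\Spec A)$ and $\un{z}_*(\mathfrak{Y})=z_*(\Spec B)$, and the push-forward of Definition \ref{defn:push-forward 1} coincides with the ordinary scheme-theoretic push-forward along the finite morphism $\tilde f\colon\Spec A\to\Spec B$, as noted in the discussion following that definition. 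Hence \eqref{eqn:pf coh cy0} is equivalent to its scheme-theoretic analogue.

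Next I would apply dévissage exactly as in the proof of Lemma \ref{lem:Tor vanishing}: by \cite[Lemma 01YF]{stacks}, $M$ admits a finite filtration $0=M_r\subsetneq M_{r-1}\subsetneq\cdots\subsetneq M_0=M$ of $A$-submodules with $M_j/M_{j+1}\simeq A/\mathfrak{p}_j$ for primes $\mathfrak{p}_j\subset A$. Both sides of \eqref{eqn:pf coh cy0} are additive on the resulting short exact sequences---the associated-cycle map is additive at each generic point of the support (lengths are additive there), and $\tilde f_*$ is exact because $\tilde f$ is affine---so it is enough to verify \eqref{eqn:pf coh cy0} when $\mathcal{F}=\mathcal{O}_{\mathfrak{Z}}$ for an integral $\mathfrak{Z}=\Spf(A/\mathfrak{p})$.

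Finally, for such an integral $\mathfrak{Z}$ of dimension $d:=\dim(A/\mathfrak{p})$, set $\mathfrak{q}:=\phi^{-1}(\mathfrak{p})\subset B$. Definition \ref{defn:push-forward 1} gives the left-hand side as $f_*[\mathfrak{Z}]=[f_*\mathcal{O}_{\mathfrak{Z}}]_d$, so the task reduces to showing that $[f_*\mathcal{O}_{\mathfrak{Z}}]$ is already pure of dimension $d$. Since $A/\mathfrak{p}$ is a domain that is a torsion-free finite extension of $B/\mathfrak{q}$, the support of the coherent $\mathcal{O}_{\mathfrak{Y}}$-module $f_*\mathcal{O}_{\mathfrak{Z}}$ equals the irreducible closed subset $V(\mathfrak{q})$, and the finite extension of domains $B/\mathfrak{q}\hookrightarrow A/\mathfrak{p}$ forces $\dim(B/\mathfrak{q})=\dim(A/\mathfrak{p})=d$. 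Hence $[f_*\mathcal{O}_{\mathfrak{Z}}]=[f_*\mathcal{O}_{\mathfrak{Z}}]_d$, which gives the equality.

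The main point requiring care is the additivity of the associated-cycle map along the filtration, since $[\,\cdot\,]$ is defined via minimal associated primes of the support and has to be invoked carefully on short exact sequences. Once this is granted, the reduction to schemes and the dimension computation in the integral case are both purely formal.
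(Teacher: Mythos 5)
Your first step coincides with the paper's entire proof: the paper also writes $\mathfrak{X}=\Spf(A)$, $\mathfrak{Y}=\Spf(B)$, uses $\un{z}_*(\mathfrak{X})=z_*(\Spec A)$ and $\un{z}_*(\mathfrak{Y})=z_*(\Spec B)$ together with the remark after Definition \ref{defn:push-forward 1} that $f_*$ is the ordinary finite push-forward, and then simply quotes the scheme-level statement \cite[Lemma 0EP3]{stacks}. Where you diverge is in re-proving that scheme-level statement by d\'evissage, and that is where there is a genuine gap. The associated-cycle map $M\mapsto[M]$, taken with respect to the minimal associated primes of the support, is \emph{not} additive on short exact sequences: lengths are indeed additive at every prime minimal in $\Supp(M)$ (your parenthetical is correct), but a minimal prime of $\Supp(M')$ or of $\Supp(M'')$ need not remain minimal in $\Supp(M)$. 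Already $0\to (x)\to k[x]\to k[x]/(x)\to 0$ gives $[(x)]+[k[x]/(x)]=[\Spec k[x]]+[V(x)]\neq[\Spec k[x]]=[k[x]]$. Consequently the sum $\sum_j[A/\mathfrak{p}_j]$ over the filtration from \cite[Lemma 01YF]{stacks} is in general strictly larger than $[\mathcal{F}]$ (embedded and lower-dimensional quotients survive in the sum), so the d\'evissage does not close as written; flagging the point as "requiring care" does not resolve it.

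The standard repair is exactly how the reference quoted by the paper is formulated: fix $k\geq\dim\Supp\mathcal{F}$ and run the d\'evissage with the truncated cycles $[\,\cdot\,]_k$ on sheaves whose support has dimension $\leq k$. There additivity does hold, because a point of dimension $k$ in the support of a sub- or quotient module is automatically a generic point of the support of the middle term; the filtration quotients of smaller dimension contribute nothing, and the integral case is handled exactly as in your last paragraph, the truncation being harmless since a finite morphism preserves the dimension of an irreducible closed subset. This care is not optional: for mixed-dimensional supports the untruncated identity itself can fail (take $B=k[x]$, $A=k[x]\times k$, $\mathcal{F}=\mathcal{O}_{\Spec A}$; then $f_*[\mathcal{F}]=[\Spec B]+[V(x)]$, while $[f_*\mathcal{F}]=[\Spec B]$ because the point component is swallowed by the line). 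So the identity should be read---and in the paper it is only applied---for sheaves whose associated cycle is equidimensional, e.g.\ $\mathcal{F}=\mathcal{O}_{\mathfrak{Z}}$ with $\mathfrak{Z}$ integral, or $\mathcal{A}$ with $[\mathcal{A}]\in z^q(\widehat{X},n)$; in that case $[\mathcal{F}]=[\mathcal{F}]_k$ and also $[f_*\mathcal{F}]=[f_*\mathcal{F}]_k$, since the images of the $k$-dimensional components are again $k$-dimensional and hence pairwise incomparable. Alternatively, after your (correct) reduction to $\Spec A\to\Spec B$ you could simply cite \cite[Lemma 0EP3]{stacks}, as the paper does, rather than reconstructing its proof.
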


\begin{proof}
Let $\mathfrak{X} = \Spf (A)$ and $\mathfrak{Y} = \Spf (B)$ so that the natural homomorphism $B \to A$ is finite. By construction we have $\un{z}_ * (\mathfrak{X}) = z_* (\Spec (A))$ and $\un{z}_* (\mathfrak{Y}) = z_* (\Spec (B))$, and we are given $[\mathcal{F}] \in z_* (\Spec (A))$. 
The push-forward $f_*$ on cycles is the push-forward of cycles on the scheme $\Spec (A)$. Hence the statement follows from the classical case of schemes, which is in \cite[Lemma 0EP3]{stacks}, for instance.
\end{proof}

The finite push-forwards as in Definition \ref{defn:push-forward 1} induce push-forwards on higher Chow cycles on noetherian affine formal $k$-schemes under an additional mild assumption:

\begin{lem}\label{lem:pf}

Let $f: \mathfrak{X} \to \mathfrak{Y}$ be a finite surjective morphism of equidimensional noetherian affine formal $k$-schemes. 

Then we have the induced push-forward morphism 
$$
f_*: z^q (\mathfrak{X}, \bullet) \to z^q (\mathfrak{Y}, \bullet)
$$
of complexes of abelian groups.
\end{lem}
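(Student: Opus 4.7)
The plan is to define the push-forward on the higher Chow complex by setting $f_* := (f \times \id_{\square_k^n})_*$, that is, by applying Definition~\ref{defn:push-forward 1} to the induced morphism $f \times \id: \square_{\mathfrak{X}}^n \to \square_{\mathfrak{Y}}^n$. Then I would verify in order: (i) codimension is preserved, (ii) each of (GP), (SF), (TS) in Definition~\ref{defn:HCG} is inherited by the pushed-forward cycle, (iii) $f_*$ commutes with the face operators $\partial_i^\epsilon$, and (iv) degenerate cycles go to degenerate cycles so that the map descends to the quotient complex $z^q$. Claim (i) is immediate because $f \times \id$ is again finite surjective between equidimensional noetherian affine formal $k$-schemes, hence dimension-preserving on integral cycles.

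For the three membership conditions in (ii), I would use the cartesian identity $(f \times \id)^{-1}(\mathfrak{Y} \times F) = \mathfrak{X} \times F$ for every face $F \subset \square_k^n$. Combined with finiteness and dimension-preservation, this transports the proper intersection of $\mathfrak{Z}$ with $\mathfrak{X} \times F$ to a proper intersection of $f_*[\mathfrak{Z}]$ with $\mathfrak{Y} \times F$, yielding (GP). For (SF), I would first check that $f$ restricts to a finite surjective morphism $\mathfrak{X}_{\red} \to \mathfrak{Y}_{\red}$ on the loci defined by the largest ideals of definition, and then rerun the codimension argument on the reductions. For (TS), the hypothesis $|\mathfrak{Z}| = |Z| \subset |\mathfrak{X}_{\red} \times \square_k^n|$ propagates set-theoretically under $f \times \id$ to $|f_*[\mathfrak{Z}]| \subset |\mathfrak{Y}_{\red} \times \square_k^n|$, which forces the required equality with the reduction of $f_*[\mathfrak{Z}]$ by Lemma~\ref{lem:supp2}.

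The heart of the argument is (iii). Using Lemma~\ref{lem:pf coh cy}, I would translate the cycle identity $f_* \circ \partial_i^\epsilon = \partial_i^\epsilon \circ f_*$ into the sheaf identity $(f \times \id)_* (\iota_i^\epsilon)^* \mathcal{O}_{\mathfrak{Z}} \simeq (\iota_i^\epsilon)^* (f \times \id)_* \mathcal{O}_{\mathfrak{Z}}$, and obtain it via flat base change along the cartesian square
$$
\begin{CD}
\mathfrak{X} \times_k F @>{f \times \id}>> \mathfrak{Y} \times_k F \\
@VV{\iota_i^\epsilon}V @VV{\iota_i^\epsilon}V \\
\square_{\mathfrak{X}}^n @>{f \times \id}>> \square_{\mathfrak{Y}}^n
\end{CD}
$$
with $F = \{y_i = \epsilon\}$; here (GP) ensures that the underived and derived pull-backs along $\iota_i^\epsilon$ agree (all higher $\Tor$'s vanish on the cycle level), which is precisely what is needed so that base change descends to the equality of associated cycles. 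Compatibility with degeneracies in (iv) is then routine: a cycle pulled back along a coordinate projection $\square_{\mathfrak{X}}^n \to \square_{\mathfrak{X}}^{n-1}$ pushes forward to a cycle pulled back along the corresponding projection $\square_{\mathfrak{Y}}^n \to \square_{\mathfrak{Y}}^{n-1}$, again by the same base-change principle.

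The main obstacle I anticipate is step (iii): carefully justifying the base change in the formal scheme category rather than for ordinary schemes. My fallback plan would be to pass to the associated affine schemes $\Spec(A\{y_1,\dots,y_n\})$ and $\Spec(B\{y_1,\dots,y_n\})$, where $f \times \id$ is an ordinary finite morphism of noetherian schemes, apply the classical projection formula there, and then transport the resulting identity of cycles back to the formal side via the equality $\un{z}_*(\Spf(R\{y_1,\dots,y_n\})) = z_*(\Spec(R\{y_1,\dots,y_n\}))$ built into our definitions.
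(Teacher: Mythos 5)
Your overall plan coincides with the paper's: define $f_*$ via Definition~\ref{defn:push-forward 1} applied to $f\times\id$, verify (\textbf{GP}), (\textbf{SF}), (\textbf{TS}), and then check commutation with the face maps. The (\textbf{GP}) argument and the treatment of degenerate cycles are fine. The one place where your sketch misses the actual difficulty is the (\textbf{SF}) (and, to a lesser extent, the (\textbf{TS})) step. The ingredient you propose --- that $f$ restricts to a finite surjective morphism $\mathfrak{X}_{\red}\to\mathfrak{Y}_{\red}$ --- is the containment $f(\mathfrak{X}_{\red})\subset\mathfrak{Y}_{\red}$, but what "rerunning the codimension argument on the reductions" really needs is the opposite, preimage-type statement: one must compare $f_*(\mathfrak{Z})\cap(\mathfrak{Y}_{\red}\times F)$ with $\mathfrak{Z}\cap(\mathfrak{X}_{\red}\times F)$, and $(f\times\id)^{-1}(\mathfrak{Y}_{\red}\times F)$ is \emph{not} $\mathfrak{X}_{\red}\times F$; it is $\mathfrak{X}_{f^{-1}(\mathcal{I}_0)}\times F$, where $f^{-1}(\mathcal{I}_0)\mathcal{O}_{\mathfrak{X}}$ is an ideal of definition of $\mathfrak{X}$ (this already uses that the finite morphism $f$ is adic) which need not be the \emph{largest} one. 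Bridging $\mathfrak{X}_{f^{-1}(\mathcal{I}_0)}$ and $\mathfrak{X}_{\red}$ is exactly what the paper's proof spends its effort on: it establishes the sandwich \eqref{eqn:TSTS0} using (\textbf{TS}) for $\mathfrak{Z}$, deduces (\textbf{TS}) for $f_*(\mathfrak{Z})$ from the surjectivity of $|\mathfrak{Z}|\to|f_*(\mathfrak{Z})|$, and only then gets the dimension equality needed for (\textbf{SF}) via Lemma~\ref{lem:supp2}. Your sketch, as written, silently identifies the preimage of the reduction with the reduction, which is false scheme-theoretically and requires an argument (adicness plus the fact that all ideals of definition have the same radical, or the paper's support bookkeeping) even on underlying sets. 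So this is the concrete gap: supply the statement about $f^{-1}(\mathcal{I}_0)\mathcal{O}_{\mathfrak{X}}$ and its vanishing locus before the codimension count on reductions can be "rerun".

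On step (iii), your route differs from the paper's, which simply invokes the classical compatibility (Bloch's Proposition (1.3) via Fulton's Theorem 6.2(a)) and omits details. Your reduction is reasonable, but two clarifications are needed. First, "flat base change" is not the right invocation: $\iota_i^{\epsilon}$ is a non-flat closed immersion; what you actually use is that push-forward along the finite (hence affine) morphism $f\times\id$ commutes with \emph{arbitrary} base change for quasi-coherent modules, which is elementary and suffices for the sheaf identity. Second, the sheaf isomorphism $(\iota_i^{\epsilon})^*(f\times\id)_*\mathcal{O}_{\mathfrak{Z}}\simeq(f\times\id)_*(\iota_i^{\epsilon})^*\mathcal{O}_{\mathfrak{Z}}$ does not by itself give the cycle identity $\partial_i^{\epsilon}f_*=f_*\partial_i^{\epsilon}$, because $f_*$ on cycles only records the top-dimensional part of $[f_*\mathcal{O}_{\mathfrak{Z}}]$; you still need the cycle-level lemma that for a coherent sheaf on which the face equation is a nonzerodivisor (which your Tor-vanishing observation does guarantee here, both for $\mathcal{O}_{\mathfrak{Z}}$ and for $f_*\mathcal{O}_{\mathfrak{Z}}$), the associated cycle of the restriction equals the divisor intersection of the associated cycle. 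With that standard d\'evissage lemma stated, your argument closes and is a legitimate, somewhat more self-contained alternative to the paper's citation; without it, the key multiplicity bookkeeping is only gestured at.
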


\begin{proof}

Since $f$ is finite surjective, so is the morphism $f \times_k 1: \mathfrak{X} \times_k \square_k ^n \to \mathfrak{Y} \times_k \square_k ^n$, which we also denote by $f$. Furthermore, $\dim \ \mathfrak{X} = \dim \ \mathfrak{Y}$ and $\dim \ \mathfrak{X}_{\red} = \dim \ \mathfrak{Y}_{\red}$.

For an integral cycle $\mathfrak{Z} \in z^q (\mathfrak{X}, n)$, the cycle push-forward $f_*$ is defined to be $f_* (\mathfrak{Z}):= [ f_* \mathcal{O}_\mathfrak{Z}]_d$ as in Definition \ref{defn:push-forward 1}, where $d=\dim \ \mathfrak{Z}$. We need to check that $f_* (\mathfrak{Z})$ satisfies the conditions of Definition \ref{defn:HCG} over $\mathfrak{Y}$. Let $Z= \mathfrak{Z} \cap (\mathfrak{X} _{\red} \times \square_k ^n)$, the reduction by the largest ideal of definition of $\mathfrak{X} \times \square_k^n$.

\medskip

For a face $F \subset \square_k ^n$, we have $f_* (\mathfrak{Z}) \cap (\mathfrak{Y} \times F) = f_* (\mathfrak{Z} \cap (\mathfrak{X} \times F))$. Thus $\dim \ (f_* (\mathfrak{Z}) \cap ( \mathfrak{Y} \times F)) \leq \dim \ (\mathfrak{Z} \cap (\mathfrak{X} \times F))$. 

The general position condition (\textbf{GP}) of Definition \ref{defn:HCG} for $\mathfrak{Z}$ says
$$
\dim \ (\mathfrak{Z} \cap (\mathfrak{X} \times F)) \leq  \dim \ \mathfrak{X} \times F-q.
$$
So, we have
\begin{equation}\label{eqn:pf GP}
\dim \ (f_* (\mathfrak{Z}) \cap (\mathfrak{Y} \times F)) \leq \dim \ \mathfrak{X} \times F- q = \dim \ \mathfrak{Y} \times F -q,
\end{equation}
which is the condition (\textbf{GP}) for $f_* (\mathfrak{Z})$.

\medskip

For the special fiber condition (\textbf{SF}) for $f_* (\mathfrak{Z})$, first note that
\begin{equation}\label{eqn:SFSF0}
\dim \ f_* (\mathfrak{Z}) \cap (\mathfrak{Y}_{\red} \times F) \leq \dim \ \mathfrak{Z} \cap ( \mathfrak{X}_{f^{-1} (\mathcal{I}_0)} \times F),
\end{equation}
where $\mathfrak{X}_{f^{-1} (\mathcal{I}_0)}$ is the scheme associated to the ideal $f^{-1} (\mathcal{I}_0) \mathcal{O}_{\mathfrak{X}}$. 
Since $\mathfrak{Z} \cap (\mathfrak{X}_{\red} \times F)$ and $\mathfrak{Z} \cap (\mathfrak{X}_{f^{-1} (\mathcal{I}_0)} \times F)$ are both noetherian schemes, their dimensions as schemes are equal to the dimensions of their respective underlying noetherian topological spaces. But, their underlying topological spaces are both equal to $|\mathfrak{Z} \cap (\mathfrak{X} \times F)|$ (Lemma \ref{lem:TS}) so that we have
\begin{equation}\label{eqn:SFSF1}
 \dim \ \mathfrak{Z} \cap (\mathfrak{X}_{f^{-1} (\mathcal{I}_0)} \times F) =\dim \ \mathfrak{Z} \cap (\mathfrak{X}_{\red} \times F).
\end{equation}
On the other hand, by the condition (\textbf{SF}) for $\mathfrak{Z}$, we have
\begin{equation}\label{eqn:SFSF2}
\dim \ \mathfrak{Z} \cap (\mathfrak{X}_{\red} \times F) \leq \dim \ \mathfrak{X}_{\red} \times F -q = \dim \ \mathfrak{Y}_{\red} \times F - q.
\end{equation}
Combining \eqref{eqn:SFSF0}, \eqref{eqn:SFSF1}, and \eqref{eqn:SFSF2}, we deduce that
$$
\dim \ f_* (\mathfrak{Z}) \cap (\mathfrak{Y}_{\red} \times F) \leq \dim \ \mathfrak{Y}_{\red} \times F - q,
$$
which is the condition (\textbf{SF}) for $f_* (\mathfrak{Z})$. 

\medskip

Thus we have checked that $f_* (\mathfrak{Z}) \in z^q (\mathfrak{Y}, n)$, so that $f_*$ maps $z^q (\mathfrak{X}, n)$ to $ z^{q} (\mathfrak{Y}, n)$.

\medskip

For its compatibility with the codimension $1$ faces, take the face $F= F_i ^{\epsilon}$ for some $1 \leq i \leq n$ and $\epsilon \in \{ 0, \infty\}$ in the proof of the condition (\textbf{GP}) in the above. By \eqref{eqn:pf GP}, we obtain the square
$$
\xymatrix{ 
z^q (\mathfrak{X}, n) \ar[r] ^{\partial_i ^{\epsilon}} \ar[d] ^{f_*} & z^q (\mathfrak{X}, n-1) \ar[d] ^{f_*} \\
z^q (\mathfrak{Y}, n) \ar[r] ^{\partial_i ^{\epsilon}} & z^q (\mathfrak{Y}, n-1).}
$$
That this diagram commutes is checked exactly as done in \cite[Proposition (1.3)]{Bloch HC} using \cite[Theorem 6.2(a), p.98]{Fulton}. We omit details. This implies that $f_*$ commutes with $\partial$. This proves the lemma.
\end{proof}

\subsection{The mod $Y$-equivalence on cycles}\label{sec:mod Y}
Returning back to $Y:= \Spec (k_m)$ and $\widehat{X} = \Spf (k[[t]])$, we define the mod $Y$-equivalence on $z^q (\widehat{X}, n)$ as follows.

\begin{defn}\label{defn:mod Y equiv}
Under the above notations:
\begin{enumerate}
\item Let $\mathcal{R}^q (\widehat{X}, n)$ be the set of all coherent $\mathcal{O}_{\square_{\widehat{X}} ^n}$-algebras $\mathcal{A}$ such that the associated cycle $[\mathcal{A}] \in z^q (\widehat{X}, n)$.
\item Let $\mathcal{L}^q (\widehat{X}, Y, n)$ be the set of all pairs $(\mathcal{A}_1, \mathcal{A}_2)$ of coherent $\mathcal{O}_{\square_{\widehat{X}}^n}$-algebras with $\mathcal{A}_j \in \mathcal{R}^q (\widehat{X}, n)$ such that there is an isomorphism
\begin{equation}\label{eqn:iso algebra}
\mathcal{A}_1 \otimes _{\mathcal{O}_{\square_{\widehat{X}}^n} } \mathcal{O}_{\square_Y^n} \simeq \mathcal{A}_2 \otimes _{\mathcal{O}_{\square_{\widehat{X}}^n} } \mathcal{O}_{\square_Y^n} 
\end{equation}
of $\mathcal{O}_{\square_Y^n}$-algebras.
\item Let $\mathcal{M}^q (\widehat{X}, Y, n) \subset z^q (\widehat{X}, n)$ be the subgroup generated by the cycles of the form $[\mathcal{A}_1]-[\mathcal{A}_2]$ over all pairs $(\mathcal{A}_1, \mathcal{A}_2) \in \mathcal{L}^q (\widehat{X}, Y, n)$. 
\end{enumerate}

We say that two cycles \emph{$\mathfrak{Z}_1$ and $\mathfrak{Z}_2 \in z^q (\widehat{X}, n)$ are mod $Y$-equivalent} and write $\mathfrak{Z}_1 \sim_Y \mathfrak{Z}_2$, if $\mathfrak{Z}_1 - \mathfrak{Z}_2 \in \mathcal{M}^q (\widehat{X}, Y, n)$.\qed
\end{defn}

We have two remarks regarding Definition \ref{defn:mod Y equiv}.

\begin{remk}\label{remk:derived-underived}
Corollary \ref{cor:derived-underived} offers a hint on how one may generalize the mod $Y$-equivalence of Definition \ref{defn:mod Y equiv} to more general $k$-schemes of finite type to be discussed in \cite{Park general}.

For a general formal neighborhood, in \cite{Park general} the author uses some ideas from derived algebraic geometry. The isomorphism \eqref{eqn:iso algebra} is replaced by an isomorphism of a suitable pair of ``derived rings". While it is of a similar form, it is finer than the derived Milnor patching for perfect complexes studied by S. Landsburg \cite{Landsburg Duke}, which generalizes the classical Milnor patching of J. Milnor \cite{Milnor K}. 

It seems it is worth studying whether one can avoid using this heavy machine of derived algebraic geometry, by improving Lemma \ref{lem:Tor vanishing} to the fullest possible extent in the future.
\qed
\end{remk}

\begin{remk}\label{remk:mod Y=naive Y}
Let $I:= (t^m) \subset k[[t]]$. 
Instead of the above relation $\sim_Y$ on the cycles, one may ask whether it might be better to use the relation $\sim_I$ defined as follows: first declare $\mathfrak{Z}_1 \sim_I  \mathfrak{Z}_2$ for a pair of \emph{integral} cycles $\mathfrak{Z}_1$ and $ \mathfrak{Z}_2 \in z^q (\widehat{X}, n) $ when their structure sheaves $(\mathcal{O}_{\mathfrak{Z}_1}, \mathcal{O}_{\mathfrak{Z}_2})$ satisfy \eqref{eqn:iso algebra} of Definition \ref{defn:mod Y equiv}, namely
$$
\mathcal{O}_{\mathfrak{Z}_1} \otimes _{\mathcal{O}_{\square_{\widehat{X}} ^n}} \mathcal{O}_{\square_Y^n} \simeq
\mathcal{O}_{\mathfrak{Z}_2} \otimes _{\mathcal{O}_{\square_{\widehat{X}} ^n}} \mathcal{O}_{\square_Y^n}.
$$

 Define the subgroup $\mathcal{N} ^q (\widehat{X}, Y, n) \subset z^q (\widehat{X},  n)$ generated by $[\mathfrak{Z}_1]- [\mathfrak{Z}_2]$ for such pairs $(\mathcal{O}_{\mathfrak{Z}_1}, \mathcal{O}_{\mathfrak{Z}_2})$. We define $\sim_I$ on $z^q (\widehat{X}, n)$ using this subgroup $\mathcal{N}^q (\widehat{X}, Y, n)$. It is similar to the version taken in \cite{PU Milnor}, called the ``mod $t^m$-equivalence" there.

A benefit of $\sim_I$ is that it is relatively easier to work with; when one wishes to check whether certain operations on cycles respect the equivalence, one can test them just on pairs of integral cycles. A draw-back of this is that, the property of a scheme being integral is not preserved well under base change in general (see, e.g. EGA ${\rm IV}_2$ \cite[D\'efinition (4.6.2), p.68]{EGA4-4} for the definition of geometric integrality, which does behave well under base change), so that this approach counting on putting relations on pairs integral cycles is unlikely to be generalizable in a functorial way, in general.

However, at least in the Milnor range (i.e. $q=n$), we will show that our definition of $\sim_Y$ via $\mathcal{M}^n (\widehat{X}, Y,n)$ in Definition \ref{defn:mod Y equiv} and $\sim_I$ in the above paragraph via $\mathcal{N}^n (\widehat{X}, Y, n)$ are equivalent. The proof requires a few properties of the generating cycles in $z^n (\widehat{X}, n)$, so that it is postponed until Lemma \ref{lem:mod Y=naive Y}. This result will be used in \S \ref{sec:strong graph} and \S \ref{sec:5}.\qed
\end{remk}

Coming back to our construction, we observe:

\begin{lem}\label{lem:mod Y boundary}
Let $Y= \Spec (k_m)$ and $\widehat{X}= \Spf (k[[t]])$. Let $n \geq 1$.
For $1 \leq i \leq n$ and $\epsilon \in \{0, \infty \}$, we have 
$$\partial_i ^{\epsilon} \mathcal{M}^q (\widehat{X}, Y, n) \subset \mathcal{M}^q (\widehat{X}, Y, n-1).
$$

In particular, $\mathcal{M}^q (\widehat{X}, Y, \bullet) \subset z^q (\widehat{X}, \bullet)$ is a subcomplex.
\end{lem}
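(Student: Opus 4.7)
The plan is to take an arbitrary generating pair $(\mathcal{A}_1, \mathcal{A}_2) \in \mathcal{L}^q(\widehat{X}, Y, n)$ and show that
$$
\partial_i^\epsilon([\mathcal{A}_1] - [\mathcal{A}_2]) = [(\iota_i^\epsilon)^*\mathcal{A}_1] - [(\iota_i^\epsilon)^*\mathcal{A}_2]
$$
is a generator of $\mathcal{M}^q(\widehat{X}, Y, n-1)$, where $(\iota_i^\epsilon)^*\mathcal{A}_j := \mathcal{A}_j \otimes_{\mathcal{O}_{\square_{\widehat{X}}^n}} \mathcal{O}_{\square_{\widehat{X}}^{n-1}}$ denotes the underived pullback along the closed immersion $\iota_i^\epsilon$. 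This reduces the lemma to three verifications.

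First, I would establish the cycle-level identity $\partial_i^\epsilon[\mathcal{A}_j] = [(\iota_i^\epsilon)^*\mathcal{A}_j]$. Using a finite filtration of $\mathcal{A}_j$ by integral quotient sheaves $\mathcal{O}_{\mathfrak{Z}_l}$ as in the proof of Lemma \ref{lem:Tor vanishing}, and noting that condition (\textbf{GP}) in Definition \ref{defn:HCG} forces a local equation of $\{y_i = \epsilon\}$ to be a non-zerodivisor on each $\mathcal{O}_{\mathfrak{Z}_l}$, one gets $\Tor^{\mathcal{O}_{\square_{\widehat{X}}^n}}_{>0}(\mathcal{O}_{\mathfrak{Z}_l}, \mathcal{O}_{\square_{\widehat{X}}^{n-1}}) = 0$. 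An induction on the length of the filtration, modeled on the proof of Lemma \ref{lem:Tor vanishing}, then propagates this vanishing to $\mathcal{A}_j$ itself, so that $(\iota_i^\epsilon)^*$ is exact on the filtration and the associated cycles match term-by-term.

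Next, I would verify $[(\iota_i^\epsilon)^*\mathcal{A}_j] \in z^q(\widehat{X}, n-1)$. Any face $F' \subset \square_k^{n-1}$ is itself a face of $\square_k^n$ under $\iota_i^\epsilon$, so conditions (\textbf{GP}) and (\textbf{SF}) for the pullback against $F'$ are literally special cases of the corresponding conditions for $\mathcal{A}_j$ against $F'$ viewed in $\square_k^n$, while (\textbf{TS}) is inherited via Lemma \ref{lem:TS face}. To check $((\iota_i^\epsilon)^*\mathcal{A}_1, (\iota_i^\epsilon)^*\mathcal{A}_2) \in \mathcal{L}^q(\widehat{X}, Y, n-1)$, I would use that $\square_Y^{n-1} \simeq \square_{\widehat{X}}^{n-1} \times_{\square_{\widehat{X}}^n} \square_Y^n$, giving the underived identity $\mathcal{O}_{\square_Y^{n-1}} \simeq \mathcal{O}_{\square_{\widehat{X}}^{n-1}} \otimes_{\mathcal{O}_{\square_{\widehat{X}}^n}} \mathcal{O}_{\square_Y^n}$. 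Transitivity of tensor products then yields
$$
(\iota_i^\epsilon)^*\mathcal{A}_j \otimes_{\mathcal{O}_{\square_{\widehat{X}}^{n-1}}} \mathcal{O}_{\square_Y^{n-1}} \simeq \bigl(\mathcal{A}_j \otimes_{\mathcal{O}_{\square_{\widehat{X}}^n}} \mathcal{O}_{\square_Y^n}\bigr) \otimes_{\mathcal{O}_{\square_Y^n}} \mathcal{O}_{\square_Y^{n-1}},
$$
and tensoring the given isomorphism \eqref{eqn:iso algebra} with $\mathcal{O}_{\square_Y^{n-1}}$ over $\mathcal{O}_{\square_Y^n}$ delivers the required $\mathcal{O}_{\square_Y^{n-1}}$-algebra isomorphism.

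The main obstacle I anticipate is the first step: matching $\partial_i^\epsilon[\mathcal{A}_j]$ with $[(\iota_i^\epsilon)^*\mathcal{A}_j]$ genuinely requires Tor vanishing against $\mathcal{O}_{\square_{\widehat{X}}^{n-1}}$, which relies on invoking (\textbf{GP}) through the integral pieces of the filtration rather than just looking at $\mathcal{A}_j$ as a coherent module. Everything else is formal manipulation of (\textbf{GP}), (\textbf{SF}), (\textbf{TS}) and base-change transitivity, and the final statement about $\mathcal{M}^q(\widehat{X}, Y, \bullet)$ being a subcomplex is immediate from the boundary inclusion just proved.
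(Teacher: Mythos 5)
Your overall route coincides with the paper's: reduce to a generating pair $(\mathcal{A}_1,\mathcal{A}_2)\in\mathcal{L}^q(\widehat X,Y,n)$, identify $\partial_i^\epsilon[\mathcal{A}_j]$ with $[(\iota_i^\epsilon)^*\mathcal{A}_j]$, check that the restricted algebras lie in $\mathcal{R}^q(\widehat X,n-1)$, and obtain the isomorphism over $\square_Y^{n-1}$ by applying $(\iota_i^\epsilon)^*$ to \eqref{eqn:iso algebra} (your base-change/transitivity computation is exactly this). The paper's proof consists of precisely these steps, treating the cycle identity $\partial_i^\epsilon[\mathcal{A}_j]=[(\iota_i^\epsilon)^*\mathcal{A}_j]$ as immediate from Definition \ref{defn:HCG1-1}; your second and third steps are fine and agree with it.

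The one place where you go beyond the paper is your justification of that cycle identity, and there the key claim is not correct as stated: condition (\textbf{GP}) constrains only the cycle $[\mathcal{A}_j]$, i.e. the codimension-$q$ minimal components of $\Supp(\mathcal{A}_j)$, whereas the filtration of \cite[Lemma 01YF]{stacks} can have graded pieces $\mathcal{O}_{\mathfrak{Z}_l}$ supported on embedded, higher-codimension subvarieties of $\Supp(\mathcal{A}_j)$, and nothing prevents such a piece from lying entirely inside the face $\{y_i=\epsilon\}$; on such a piece the local equation is zero, $\Tor_1$ does not vanish, and the term-by-term matching of cycles breaks. A model example over $\mathbb{A}^2$: for $M=k[x,y]/(x^2,xy)$ the associated cycle is $[V(x)]$, which meets $\{y=0\}$ properly, yet $[M\otimes k[x,y]/(y)]$ is the origin with multiplicity two while the intersection of the cycle with $\{y=0\}$ is the origin with multiplicity one, the discrepancy being exactly the nonvanishing $\Tor_1$ coming from the embedded point sitting on the divisor. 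So your filtration argument establishes the identity only when no associated point of $\mathcal{A}_j$ lies on the face (for instance when the only pieces occurring are the minimal, codimension-$q$ ones, for which (\textbf{GP}) does give the nonzerodivisor property); (\textbf{GP}), (\textbf{SF}), (\textbf{TS}) alone do not guarantee this for an arbitrary member of $\mathcal{R}^q(\widehat X,n)$. To be fair, the paper asserts the identity without comment, so structurally you are not missing anything it supplies; but the specific mechanism you invoke ("(\textbf{GP}) forces a nonzerodivisor on each filtration piece") must either be restricted to the minimal components or be supplemented by a hypothesis or argument controlling embedded components along the faces.
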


\begin{proof}
Let $\iota_i ^{\epsilon}: \square_{\widehat{X}} ^{n-1} \hookrightarrow \square_{\widehat{X}} ^n$ be the closed immersion given by $\{ y_i = \epsilon \}$. 

When $\mathcal{A} \in \mathcal{R}^q (\widehat{X}, n)$, we have $[\mathcal{A}] \in z^q (\widehat{X}, n)$. Here $ [( \iota_i ^{\epsilon})^* (\mathcal{A})]=\partial_i ^{\epsilon} [ \mathcal{A}]  \in z^q (\widehat{X}, n-1)$, so that $(\iota_i ^{\epsilon})^* \mathcal{A} \in \mathcal{R}^q (\widehat{X}, n-1)$.

When $(\mathcal{A}_1, \mathcal{A}_2) \in \mathcal{L}^q (\widehat{X}, Y, n)$, we have $\mathcal{A}_j \in \mathcal{R}^q (\widehat{X}, n)$ and there is an isomorphism
\begin{equation}\label{eqn:iso algebra 0 bdry}
\mathcal{A}_1 \otimes _{\mathcal{O}_{\square_{\widehat{X}}^n} } \mathcal{O}_{\square_Y^n} \simeq \mathcal{A}_2 \otimes _{\mathcal{O}_{\square_{\widehat{X}}^n} } \mathcal{O}_{\square_Y^n} 
\end{equation}
of $\mathcal{O}_{\square_Y^n}$-algebras. We already saw that $(\iota_i ^{\epsilon})^* \mathcal{A}_j \in \mathcal{R}^q (\widehat{X}, n-1)$. Applying $(\iota_i ^{\epsilon})^*$ to \eqref{eqn:iso algebra 0 bdry}, we deduce an isomorphism
$$
(\iota_i ^{\epsilon})^*\mathcal{A}_1 \otimes _{\mathcal{O}_{\square_{\widehat{X}}^{n-1}} } \mathcal{O}_{\square_Y^{n-1}} \simeq (\iota_i ^{\epsilon})^*\mathcal{A}_2 \otimes _{\mathcal{O}_{\square_{\widehat{X}}^{n-1}} } \mathcal{O}_{\square_Y^{n-1}} 
$$
of $\mathcal{O}_{\square_Y^{n-1}}$-algebras. This means $((\iota_i ^{\epsilon})^* \mathcal{A}_1, (\iota_i ^{\epsilon})^*\mathcal{A}_2) \in \mathcal{L}^q (\widehat{X}, Y, n-1)$. Thus
$$
\partial_i ^{\epsilon} ([\mathcal{A}_1] - [\mathcal{A}_2]) = [ (\iota_i ^{\epsilon})^* \mathcal{A}_1] -  [ (\iota_i ^{\epsilon})^* \mathcal{A}_2] \in \mathcal{M}^q (\widehat{X}, Y, n-1).
$$

Since the cycles of the form $[\mathcal{A}_1] - [\mathcal{A}_2]$ generate $\mathcal{M}^q (\widehat{X}, Y, n)$, this implies that $\partial_i ^{\epsilon} \mathcal{M}^q (\widehat{X}, Y, n) \subset \mathcal{M}^q (\widehat{X}, Y, n-1)$ as desired.
\end{proof}

\begin{defn}\label{defn:defn Milnor}
Let $m \geq 2$.
Let $Y= \Spec (k_m)$ and $\widehat{X} = \Spf (k[[t]])$. Define
$$
z^q (\widehat{X} \mod Y, \bullet):= \frac{ z^q (\widehat{X}, \bullet)}{\mathcal{M}^q (\widehat{X}, Y, \bullet)},
$$
and define $\CH^q (\widehat{X} \mod Y, n):= {\rm H}_n (z^q (\widehat{X} \mod Y, \bullet)).$ We denote this group also by $\BGH^q (k_m,n)= \BGH^q (\Spec (k_m), n)$ in the boldface letters, to distinguish it from the higher Chow group of S. Bloch \cite{Bloch HC}.\qed
\end{defn}

\medskip

The following special range is easy to compute. It answers the part (1) of the first main theorem, Theorem \ref{thm:main intro}:

\begin{prop}\label{prop:q>n}
For $q > n$, $z^q (\widehat{X} \mod Y, n) = 0$. 
In particular, we have 
$$
\CH^q (\widehat{X} \mod Y, n) =\BGH^q (k_m, n) = 0.
$$
\end{prop}

\begin{proof}
Note that $\dim \ \widehat{X} \times \square^n = n+1$. Thus if $q \geq n+2$, due to the dimension reason, we have $z^q (\widehat{X}, n) = 0$. Thus $z^q (\widehat{X} \mod Y, n) = 0$ as well.

Suppose $q=n+1$. Let $\mathfrak{Z} \in z^{n+1} (\widehat{X}, n)$ be an integral cycle. Note that $\widehat{X}_{\red} = \Spec (k)$. By the special fiber condition (\textbf{SF}) of Definition \ref{defn:HCG}, the intersection $\mathfrak{Z} \cap (\Spec (k) \times \square^n)$ has the codimension $\geq n+1$ in the space $\Spec (k) \times \square^n$ of dimension $n$. This shows the intersection $\mathfrak{Z} \cap (\Spec (k) \times \square^n)$ is empty. On the other hand, we have 
$$
| \mathfrak{Z} \cap (\Spec (k) \times \square^n_k) | \subset | \mathfrak{Z} \cap (\Spec (k_m) \times \square_k ^n)| \subset | \mathfrak{Z} \cap (\widehat{X} \times \square_k ^n)|,
$$
and the first and the third topological spaces are equal (Lemma \ref{lem:TS}). Thus $\mathfrak{Z} \cap (\Spec (k_m) \times \square^n) = \emptyset$ as well. It means
$$
\mathcal{O}_{\mathfrak{Z}} \otimes_{\mathcal{O}_{\square_{\widehat{X}}^n}} \mathcal{O}_{\square_Y^n} = 0  =  \mathcal{O}_{\emptyset} \otimes_{\mathcal{O}_{\square_{\widehat{X}}^n}} \mathcal{O}_{\square_Y^n},
$$
thus the cycle $\mathfrak{Z}$ is mod $Y$-equivalent to the empty scheme $\emptyset$. Thus $[\mathfrak{Z}] \in \mathcal{M}^{n+1} (\widehat{X}, Y, n)$ and the class of $\mathfrak{Z}$ in $z^{n+1} (\widehat{X} \mod Y, n)$ is $0$. This shows $z^{n+1} (\widehat{X} \mod Y, n) = 0$.
\end{proof}

\begin{remk}
If we specialize to the Milnor range $q=n$ in Definition \ref{defn:defn Milnor}, the group $\CH^n (\widehat{X} \mod Y, n)$ can be expressed as
\begin{equation}\label{eqn:partial0}
\CH^n (\widehat{X} \mod Y, n) = \frac{ z^n (\widehat{X} \mod Y, n)}{ \partial ( z^n (\widehat{X} \mod Y, n+1))}
\end{equation}
because $z^{n} (\widehat{X} \mod Y, n-1) = 0$ by Proposition \ref{prop:q>n} so that
$$ \ker (\partial: z^n (\widehat{X} \mod Y, n) \to z^n (\widehat{X} \mod Y, n-1)) = z^n (\widehat{X} \mod Y, n).
$$
 Thus we may rewrite \eqref{eqn:partial0} also as
\begin{equation}\label{eqn:defn Milnor}
\CH^n (\widehat{X} \mod Y, n):= \frac{ z^n (\widehat{X}, n) }{ \partial ( z^n (\widehat{X}, n+1) ) + \mathcal{M}^n (\widehat{X}, Y, n)}.
\end{equation}

One may wonder whether the numerator in \eqref{eqn:defn Milnor} is also equal to $\ker (\partial : z^n (\widehat{X}, n) \to z^n (\widehat{X}, n-1))$. The answer is yes, and we will see it in Lemma \ref{lem:no face}. \qed
\end{remk}

 \subsection{Tate algebras}\label{sec:Tate}
 
 In \S \ref{sec:Tate}, we recall some basic definitions and results related to the Tate algebras of J. Tate \cite{Tate}. We will use them to deal with the ``generic fibers" of the cycles in this article. For general references on the Tate algebras, there are a few books in the literature, e.g. Bosch-G\"untzer-Remmert \cite{BGR}, Bosch \cite{Bosch},  Fresnel-van der Put \cite{FvdP}, Fujiwara-Kato \cite{FK}. At some places, Tate algebras are also called the standard affinoid algebras or the free affinoid algebras, as well.

For a complete discrete valued field $K$ with a nontrivial non-archimedean norm, such as $k\llp t\rlp$, the Tate algebra $T_n$ over $K$ with $n$-variables $z_1, \cdots, z_n$ is defined to be the subring $K \{ z_1, \cdots, z_n \} \subset K[[z_1, \cdots, z_n ]]$ consisting of the restricted formal power series, i.e.
$$ 
p=\sum_{I= (i_1, \cdots, i_n ), i_j \geq 0}  a_I z^I,
$$
with $z^I: = z_1 ^{i_1} \cdots z_n ^{i_n}$ such that $|a_I| \to 0$ as $|I| = i_1 + \cdots + i_n \to \infty$. The norm on $K$ extends to the Gau{\ss} norm on $T_n$ by taking $|p|:= \max | a_I |$, making $T_n$ a Banach algebra over $K$ of at most countable type. See e.g. any one of \cite[\S 5.1.1, p.192]{BGR},  \cite[\S 2.2, Definition 2.2, p.13]{Bosch}, \cite[\S 3.1, p.46]{FvdP}, or \cite[\S 9.3(a), p.227]{FK}.

The following is a compilation of a few facts on Tate algebras, that we need:

\begin{lem}\label{lem:Tate}
Let $K$ be a complete discrete valued field with a nontrivial non-archimedean norm, and let $T_n$ be the Tate algebra over $K$ in $n$-variables. Then:

\begin{enumerate}
\item $T_n$ is a noetherian regular UFD of Krull dimension $n$.
\item $T_1$ is a Euclidean domain, in particular a PID.
\item For each maximal ideal $\mathfrak{m} \subset T_n$, the injective ring homomorphism $K \hookrightarrow T_n/\mathfrak{m}$ gives a finite extension of fields.
\item For any finite extension $K \subset L$ of fields, there exists a unique extension of the discrete valuation on $K$ to $L$.
\item Every $f \in T_1$ can be written uniquely as $f = u g$, where $u \in T_1 ^{\times}$ and $g \in K^{\circ} [z_1]$ for the valuation ring $K^{\circ} = \{ x \in K \ | \ |x| \leq 1 \}$.
\end{enumerate}
\end{lem}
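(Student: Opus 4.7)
All five statements are classical properties of Tate algebras; the plan is to assemble them from the standard references cited in the paragraph preceding the lemma (primarily \cite{BGR}, \cite{Bosch}, \cite{FvdP}, \cite{FK}), with the Weierstrass division/preparation theorem doing most of the real work. First I would recall the Gau{\ss} norm and the Weierstrass division theorem for $T_n$: after a suitable $K$-algebra automorphism of $T_n$ (coming from the substitutions $y_i \mapsto y_i + y_n^{c_i}$ that put a given element in Weierstrass position), every nonzero $f \in T_n$ is, up to a unit, a monic polynomial in $y_n$ of some degree $s$ with coefficients in $T_{n-1}$, and one gets a division-with-remainder statement against such a polynomial. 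This is the engine behind everything that follows.

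For (1), I would argue by induction on $n$ using Weierstrass division. The noetherian property follows because any nonzero ideal of $T_n$ contains, after a Weierstrass change of variables, a distinguished polynomial $g$, and Weierstrass division identifies $T_n/(g)$ with a finitely generated $T_{n-1}$-module, from which noetherianity, finite Krull dimension, and the Noether normalization lemma for Tate algebras (a finite injection $T_r \hookrightarrow T_n/\mathfrak{p}$) all come. The UFD property and regularity follow by the same inductive device: the UFD property reduces to unique factorization of distinguished polynomials in $T_{n-1}[y_n]$, and regularity follows from the fact that $T_n$ is a direct limit of regular rings (or by computing dimensions of maximal ideals directly after Noether normalization). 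Krull dimension $n$ is immediate from Noether normalization and the fact that $\dim T_n \ge n$ via the chain of prime ideals $(y_1) \subsetneq (y_1, y_2) \subsetneq \cdots$. Concrete references are \cite[\S 5.2.6]{BGR} or \cite[\S 2.2, \S 3.1]{Bosch}.

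For (2) and (5), in one variable the Weierstrass preparation theorem is especially clean and in fact is the source of both claims. Given $f \in T_1$ with Gau{\ss} norm attained by a coefficient $a_s$ of lowest index $s$ among those of maximal absolute value, Weierstrass preparation yields a unique factorization $f = u\,g$ with $u \in T_1^\times$ and $g \in K^{\circ}[y_1]$ a distinguished polynomial of degree $s$; that settles (5). Euclidean division against such a $g$ then produces a unique remainder of degree $< \deg g$, and since every nonzero element of $T_1$ differs from a polynomial by a unit, one obtains a Euclidean algorithm on $T_1$ with ``degree'' given by the distinguished polynomial degree, proving (2). See \cite[\S 2.2, Theorem 8 and Corollary 10]{Bosch} or \cite[\S 5.2.1]{BGR}.

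For (3) I would invoke the Tate-algebra version of Hilbert's Nullstellensatz: by Noether normalization there is a finite injection $T_r \hookrightarrow T_n/\mathfrak{m}$, and since $T_n/\mathfrak{m}$ is a field while $T_r$ has dimension $r$, we must have $r = 0$, giving a finite extension $K \hookrightarrow T_n/\mathfrak{m}$ (see \cite[\S 7.1.1]{BGR} or \cite[\S 3.2]{FvdP}). Finally, (4) is a standard fact about complete discretely valued fields: since $K$ is complete with respect to a discrete valuation, it is Henselian, and any finite extension $L/K$ then has a unique extension of the valuation, explicitly given by $|x|_L = |N_{L/K}(x)|_K^{1/[L:K]}$; see for instance Serre's Local Fields, Chapter II. The only subtlety worth highlighting is that for (3), one should be careful to formulate Noether normalization within the category of affinoid algebras rather than within polynomial algebras, but this is standard. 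I do not anticipate serious obstacles, as each item is documented; the role of this lemma is organizational, to collect the precise inputs that will be used downstream when analyzing maximal ideals of $T_n$ arising from cycles on $\square_{\widehat{X}}^n$.
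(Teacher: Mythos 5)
Your proposal is correct and takes essentially the same route as the paper: the paper's proof is simply a compilation of citations to \cite{BGR}, \cite{Bosch}, \cite{FvdP}, \cite{FK} for (1)--(3) and (5) and to Hasse for (4), and your sketch assembles the same standard facts from the same sources (with Serre in place of Hasse for the uniqueness of the extended valuation). The additional Weierstrass-division and Noether-normalization details you supply are the standard arguments behind those references and introduce no gap.
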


\begin{proof}
(1) is scattered at a few places. Collect them from \cite[\S 5.2.6, Theorem 1, p.207]{BGR}, \cite[\S 2.2, Proposition 14, 15, p.20, Proposition 17, p.22]{Bosch}, \cite[Theorem 3.2.1-(1),(2), p.48]{FvdP}, and \cite[Exercise 0.9.4, p.23, Proposition 9.3.9, p.229]{FK}.

(2) : see \cite[\S 2.2, Corollary 10, p.19]{Bosch}. 

(3) : see \cite[Corollary 12, p.19]{Bosch} or \cite[Theorem 3.2.1-(5), p.48]{FvdP}.

(4) : see e.g. H. Hasse \cite[Ch.12, Prolongation theorem, p.183]{Hasse}. 

(5) : see \cite[Exercises 3.2.2-(1), p.50]{FvdP}, as a consequence of the Weierstra{\ss} division theorem \cite[Theorem 3.1.1, p.46]{FvdP}, or e.g. \cite[\S 5.2.1, Theorem 2, p. 200, \S 5.2.2, Theorem 1, p.201]{BGR},  \cite[\S 2.2, Theorem 8, p.17, Corollary 9, p.18]{Bosch}, \cite[Exercise 0.A.3, p.236]{FK}. 
\end{proof}

\section{On the generator cycles}\label{sec:generators}

We continue to use the notations $Y= \Spec (k_m)$ and $\widehat{X}= \Spf (k[[t]])$. 

In \S \ref{sec:generators}, we study a few general aspects of the cycles in the group $z^n (\widehat{X}, n)$. These cycles generate the groups $\CH^n (\widehat{X}, n)$ and $\BGH^n (\Spec (k_m), n)$.

The arguments here are a hybrid of basic commutative algebra, basic rigid analytic geometry, and algebraic geometry of cycles on formal schemes. An essential observation is that the ``generic fiber" of an integral cycle in $z^n (\widehat{X}, n)$ induces a maximal ideal of a Tate algebra over the Laurent field $k\llp t \rlp$. Some results of rudimentary rigid analytic geometry then allow us to obtain  ``triangular polynomial generators" of the ideal that define the given integral cycle over $\Spf (k[[t]])$ in \S \ref{sec:appendix 02}.

\subsection{Algebraicity and generic fibers}\label{sec:generic fiber}
For a complete discrete valuation field $K$ with a nontrivial non-archimedean norm, consider $T_n= K\{ z_1, \cdots, z_n \}$, the Tate algebra in $n$ variables $z_1, \cdots, z_n$. Let's start with the standard fact that each maximal ideal $\mathfrak{m} \subset T_n$ of the Tate algebra is generated by $n$ elements (in particular it is a complete intersection); see S. Bosch \cite[\S 2.2, Proposition 17, p.22]{Bosch}. Here, these $n$ generators can be chosen from the subring $K[z_1,  \cdots, z_n ] \subset T_n$ of polynomials in $z_1, \cdots, z_n$; see Fresnel-van der Put \cite[Exercises 3.2.2-(2), p.50]{FvdP} or Bosch-G\"untzer-Remmert \cite[\S 7.1.1, Proposition 3, p.261]{BGR}. 

We improve this fact a bit further as follows. It might be of independent interest:

\begin{lem}\label{lem:Bosch 17}
Let $K$ be a complete discrete valuation field with a nontrivial non-archimedean norm, and $K^{\circ}$ be its valuation ring. Let $T_n= K \{ z_1, \cdots, z_n \}$ be the Tate algebra over $K$, and let $\mathfrak{m} \subset T_n$ be a maximal ideal. 

Then we have a sequence of \textbf{polynomials} in $z_1, \cdots, z_n$ 
$$
\tuborg p_1 (z_1) \in K^{\circ} [ z_1], \\
p_2 (z_1, z_2) \in K [ z_1, z_2], \\
\vdots \\
p_n (z_1, \cdots, z_n) \in K[z_1, \cdots, z_n],
\sluttuborg
$$
where
\begin{enumerate}
\item each $p_i$ is monic in $z_i$ for $1 \leq i \leq n$,
\item for $2 \leq i \leq n$, when $p_i$ is regarded as a polynomial in $z_i$, its coefficients in $ K [ z_1, \cdots, z_{i-1}]$ have their $z_{j}$-degrees strictly less than $\deg_{z_j} (p_j)$ for all $1 \leq j < i$, and
\item $(p_1, \cdots, p_n) = (p_1, \cdots , p_n) T_n = \mathfrak{m}$.
\end{enumerate}
\end{lem}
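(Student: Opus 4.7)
The plan is to construct $p_1, \ldots, p_n$ inductively via minimal polynomials in the tower of finite field extensions of $K$ inside $L := T_n/\mathfrak{m}$, and then to verify generation of $\mathfrak{m}$ in $T_n$ via the local structure at $\mathfrak{m}$.

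By \lemref{lem:Tate}(3), $L$ is a finite field extension of $K$, equipped with the unique extension of the valuation by \lemref{lem:Tate}(4). A preliminary observation is that each $\bar{y}_i \in L$ satisfies $|\bar{y}_i|_L \leq 1$: on the finite-dimensional $K$-space $L$, the quotient seminorm $\|\cdot\|$ inherited from the Gauss norm on $T_n$ is equivalent to $|\cdot|_L$, so $|\bar{y}_i|_L \leq C \|\bar{y}_i\| \leq C$ for some constant $C$, and passing to $N$-th powers with $|\bar{y}_i^N|_L = |\bar{y}_i|_L^N \leq C$ and $N \to \infty$ forces $|\bar{y}_i|_L \leq 1$.

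Set $L_i := K[\bar{y}_1, \ldots, \bar{y}_i] \subseteq L$, giving a tower $K = L_0 \subset L_1 \subset \cdots \subset L_n = L$ of finite extensions. I construct $p_i$ by induction, carrying the stronger hypothesis $(p_1, \ldots, p_{i-1}) = \mathfrak{q}_{i-1} := \ker(K[y_1, \ldots, y_{i-1}] \twoheadrightarrow L_{i-1})$. At step $i$, take the monic minimal polynomial $m_i(y_i) \in L_{i-1}[y_i]$ of $\bar{y}_i$ over $L_{i-1}$, of degree $d_i := [L_i : L_{i-1}]$. By iterated polynomial division against the monic $p_1, \ldots, p_{i-1}$, each coefficient of $m_i$ has a unique lift to $K[y_1, \ldots, y_{i-1}]$ with $y_j$-degree strictly less than $\deg_{y_j}(p_j)$ for $j < i$; assembling these lifts produces $p_i$ satisfying conditions (1) and (2). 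Since $p_i$ vanishes at $(\bar{y}_1, \ldots, \bar{y}_i)$, it lies in $\mathfrak{q}_i$; the reduced-monomial $K$-basis of $K[y_1, \ldots, y_i]/(p_1, \ldots, p_i)$ has size $\prod_{j \leq i} d_j = [L_i : K]$, and together with the surjection onto $L_i$ this forces $(p_1, \ldots, p_i) = \mathfrak{q}_i$. The base-ring condition $p_1 \in K^\circ[y_1]$ follows because $|\bar{y}_1|_L \leq 1$ makes $\bar{y}_1$ integral over the integrally closed DVR $K^\circ$, so its monic minimal polynomial $p_1$ over $K$ lies in $K^\circ[y_1]$.

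It remains to show $(p_1, \ldots, p_n) T_n = \mathfrak{m}$; only the inclusion $\supseteq$ requires work. My plan: since the Galois orbit of $(\bar{y}_1, \ldots, \bar{y}_n)$ lies in the unit polydisk (by the preliminary bound), it corresponds to the single maximal ideal $\mathfrak{m}$, and the Tate Nullstellensatz gives $\sqrt{(p_1, \ldots, p_n) T_n} = \mathfrak{m}$; combined with Krull's height theorem and the Cohen--Macaulay property of $T_n$ (via regularity, \lemref{lem:Tate}(1)), $(p_1, \ldots, p_n)T_n$ is $\mathfrak{m}$-primary of height $n$ and $T_n/(p_1, \ldots, p_n)T_n$ is a local Artinian ring with residue field $L$. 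The final step is to show this Artinian quotient equals $L$, equivalently that the images of $p_1, \ldots, p_n$ generate the cotangent space $\mathfrak{m}T_{n,\mathfrak{m}}/(\mathfrak{m}T_{n,\mathfrak{m}})^2$ over $L$. This is the main obstacle: when each $L_i/L_{i-1}$ is separable it follows from the upper-triangular Jacobian computation $\det(\partial p_i/\partial y_j)|_{\bar y} = \prod_i m_i'(\bar y_i) \neq 0$; in the inseparable case one must work harder, either modifying the construction (e.g., replacing inseparable generators by separable ones via a Frobenius twist) or invoking the cited polynomial $n$-generation of $\mathfrak{m}$ (\cite[\S 2.2, Proposition 17, p.22]{Bosch}) and comparing lengths of Artinian quotients.
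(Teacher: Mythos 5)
Your construction of the $p_i$ (minimal polynomials of $\bar y_i$ over $L_{i-1}$, coefficients lifted in reduced form by division against the monic $p_1,\dots,p_{i-1}$) is sound, and conditions (1), (2), the bound $|\bar y_i|\leq 1$, and the identity $(p_1,\dots,p_i)=\mathfrak q_i$ in the \emph{polynomial} ring are all correctly argued. But the entire content of the lemma is condition (3), generation of $\mathfrak m$ in $T_n$, and there your argument only delivers $\sqrt{(p_1,\dots,p_n)T_n}=\mathfrak m$. The step you yourself flag as ``the main obstacle'' is a genuine gap: the Jacobian/cotangent computation needs every $L_i/L_{i-1}$ separable, and inseparable residue extensions really occur over $K=k\llp t \rlp$ in characteristic $p$ (already $\mathfrak m=(y_1^p-t)\subset T_1$). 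Neither fallback is worked out: a ``Frobenius twist'' would replace the $p_i$ by different elements, so it is no longer clear they are the polynomials you constructed or that (1)--(2) survive; and invoking Bosch's Proposition 17 that $\mathfrak m$ is generated by \emph{some} $n$ polynomials gives no control on the length of \emph{your} Artinian quotient $T_n/(p_1,\dots,p_n)T_n$, so the proposed ``length comparison'' does not close the argument.

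The paper's proof avoids this issue entirely and shows how to repair yours. It inducts on $n$ using the surjection $\pi\colon (T_{n-1}/\mathfrak m_{n-1})\{y_n\}\to T_n/\mathfrak m$: by \lemref{lem:Tate}-(3),(4) the field $T_{n-1}/\mathfrak m_{n-1}$ ($=L_{n-1}$) is again a complete discretely valued field, so the one-variable Tate algebra over it is a PID (\lemref{lem:Tate}-(2)) and the unit-times-monic factorization (\lemref{lem:Tate}-(5)) produces a monic polynomial $\bar p_n$ with $(\bar p_n)=\ker\pi$; since $\ker\phi'=(p_1,\dots,p_{n-1})T_n$, one gets $\mathfrak m=\phi'^{-1}(\ker\pi)=(p_1,\dots,p_n)T_n$ immediately, with no separability hypothesis and no Nullstellensatz or cotangent-space analysis; the degree bounds (2) are then arranged afterwards by Weierstra{\ss} division. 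Concretely, the missing step in your write-up amounts to the $n=1$ case of the lemma applied over $L_{i-1}$: the minimal polynomial of $\bar y_i$ generates the kernel of $L_{i-1}\{y_i\}\to L_i$ because that kernel is principal in a PID and a monic polynomial generator must divide, and be divided by, the minimal polynomial up to a unit of the Tate algebra. If you insert that argument in place of your Jacobian step, your proof becomes complete and is then essentially the paper's proof in different clothing.
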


\begin{proof}
The proof is in part based on the argument of S. Bosch \cite[\S 2.2, Proposition 17, p.22]{Bosch}, with some improvements to deduce our stronger statements.

For the sequence of inclusions $T_1 \subset T_2 \subset \cdots \subset T_n$, consider the prime ideals $\mathfrak{m}_i:= T_i \cap \mathfrak{m}$ for $1 \leq i \leq n$. Here $\mathfrak{m}_n = \mathfrak{m}$. These induce injections
$$
K \hookrightarrow T_1 / \mathfrak{m}_1 \hookrightarrow \cdots \hookrightarrow T_n / \mathfrak{m}_n,
$$
where $T_n/ \mathfrak{m}_n$ is a finite extension of $K$ of fields (Lemma \ref{lem:Tate}-(3)). Hence each intermediate $T_i/ \mathfrak{m}_i$ is also a field, thus $\mathfrak{m}_i \subset T_i$ is a maximal ideal for $1 \leq i \leq n$.

\medskip

When $n=1$, the ideal $\mathfrak{m}_1 \subset T_1$ is maximal, while $T_1$ is a PID by Lemma \ref{lem:Tate}-(2). Hence we have $(f) = \mathfrak{m}_1$ for some nonzero $f \in T_1$. By Lemma \ref{lem:Tate}-(5), we have $f = u g$ for some $u \in T_1 ^{\times}$ and a monic polynomial $g \in K^{\circ} [z_1]$. Thus $(f) = (g) = \mathfrak{m}_1$. Taking $p_1 := g$ answers the lemma in this case.

\medskip

Now suppose $n \geq 2$, and suppose the lemma holds for all positive integers $< n$, so that we have 
$$
p_1 (z_1) \in K^{\circ} [z_1], p_2 (z_1, z_2) \in K[z_1, z_2], \cdots, p_{n-1} (z_1, \cdots, z_{n-1})  \in K[z_1, \cdots, z_{n-1}]
$$ 
satisfying the conclusions of the lemma for the maximal ideal $\mathfrak{m}_{n-1} \subset T_{n-1}$.

\medskip

Note that (e.g. from \emph{loc.cit}) we have the following commutative diagram of continuous $K$-algebra homomorphisms
\begin{equation}\label{eqn:Bosch 17-1}
\xymatrix{
T_{n-1} \otimes_{T_{n-1}} T_n = T_{n-1} \{ z_n \} \ar@{=}[rr] \ar[d] _{{\rm can} \otimes {\rm Id}_{T_n} =:\phi' } & & T_n \ar[d] ^{\phi} \\
(T_{n-1}/ \mathfrak{m}_{n-1}) \otimes_{T_{n-1}} T_n = (T_{n-1}/ \mathfrak{m}_{n-1}) \{ z_n \} \ar[rr] ^{\ \ \ \ \ \ \ \ \ \ \ \ \ \ \ \ \ \ \   \pi} & & T_n / \mathfrak{m},}
\end{equation}
where the vertical maps $\phi, \phi'$ are given by the reduction mod $\mathfrak{m}$ and $\mathfrak{m}_{n-1}$, respectively, and $\pi$ maps $z_n$ to its residue class in $T_n/\mathfrak{m}$. Note that both of the maps $\phi'$ and $\pi$ are surjective.

Take $\ker (\pi)$. Since $T_n/ \mathfrak{m}$ is a field and $\pi$ is surjective, by the first isomorphism theorem $\ker (\pi) \subset (T _{n-1}/ \mathfrak{m}_{n-1})\{ z_n \}$ is a maximal ideal. On the other hand, by Lemma \ref{lem:Tate}-(4), the field $T _{n-1}/ \mathfrak{m}_{n-1}$ is also a complete discrete valued field with respect to the unique norm extending the norm on $K$. Hence by the case $n=1$ we proved already, there exists a polynomial $\bar{p}_{n} \in ( T _{n-1}/ \mathfrak{m}_{n-1}) [ z_n]$ monic in $z_n$, such that $(\bar{p}_n) = \ker (\pi)$.

Since $\phi'$ is surjective, we can choose a lifting $p_n (z_1, \cdots, z_n) \in T_{n-1} [ z_n] \subset T_{n-1} \{ z_n \}$ of $\bar{p}_n$ which is still a monic polynomial in $z_n$. 

By the commutativity of the diagram \eqref{eqn:Bosch 17-1}, we have $(p_1, \cdots, p_n) = \mathfrak{m}$, but one issue yet to resolve is that, while $p_n (z_1, \cdots, z_n)$ is in $T_{n-1} [z_n]$, it may not be yet in $K[z_1, \cdots, z_{n-1}][z_n]$. 

To improve this, when $N:= \deg_{z_n} (p_n)$, write
$$
p_n = \sum_{i=0} ^N \alpha_i z_n ^i \in T_{n-1} [ z_n],
$$
for $\alpha_i \in T_{n-1}$, $0 \leq i \leq N$ with $\alpha_N = 1$. By the induction hypothesis $p_{n-1}$ is a monic polynomial in $z_{n-1}$. By the Weierstra{\ss} division theorem (Fresnel-van der Put \cite[Theorem 3.1.1, p.46]{FvdP}), for $0 \leq i < N$, we can find $q_{i} \in T_{n-1}$ and $r_i \in T_{n-2} [ z_{n-1}]$ such that
$$
\alpha_i = q_i p_{n-1} + r_i,
$$
where $r_i$ is a polynomial in $z_{n-1}$ with $\deg_{z_{n-1}} ( r_i) < \deg_{z_{n-1}} ( p_{n-1}).$ We let $r_N = 1$. Then for
$$
p_n ' := \sum_{i=1} ^N  r_i z_n ^i \in T_{n-2} [ z_{n-1}][z_n]
$$
we have $\mathfrak{m}=(p_1, \cdots, p_{n-1}, p_n) = (p_1, \cdots, p_{n-1}, p_n')$. Hence we may replace $p_n$ by $p_n'$. We can inductively apply the Weierstra{\ss} division theorem with the divisions by $p_{n-2}, p_{n-3}, \cdots, p_1$, and after these backward inductive replacements, in finite steps we obtain $p_n \in K [z_1, \cdots, z_n]$. This proves the lemma.
\end{proof}

As an immediate application, we deduce that our cycles in the Milnor range can be seen as separated $k[[t]]$-schemes of finite presentation. It will be further improved in the next subsections.

\begin{cor}\label{cor:pretriangular}
Let $\mathfrak{Z} \in z^n (\Spf (k[[t]]), n)$ be an integral cycle. Let $y_i':= y_i / (y_i -1)$ using the automorphism $y_i \mapsto y_i'$ of $\mathbb{P}^1$. 

Then there are \textbf{polynomials} in $k[[t]] [ y_1', \cdots, y_n' ]$ of the form
\begin{equation}\label{eqn:pretriangular}
\tuborg
P_1 (y_1') \in k[[t]] [y_1'],\\
\vdots \\
P_n (y_1', \cdots, y_n' ) \in k[[t]] [y_1', \cdots, y_n'],\sluttuborg
\end{equation}
such that the ideal $(P_1 (y_1'), \cdots, P_n (y_1', \cdots, y_n'))$ defines $\mathfrak{Z}$.

In particular, $\mathfrak{Z}$ can be also regarded as a separated $k[[t]]$-scheme of finite presentation.
\end{cor}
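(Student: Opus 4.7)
The cycle $\mathfrak{Z}$ is given by a height-$n$ prime $P \subset S := k[[t]]\{y_1, \ldots, y_n\}$, and the plan is to pass to the generic fiber, apply Lemma \ref{lem:Bosch 17}, then descend. First I would verify $t \notin P$: if $t$ were in $P$ then $\mathfrak{Z}$ would lie set-theoretically in the special fiber, so the reduction $\mathfrak{Z} \cap \square^n_{\widehat{X}_{\red}}$ would coincide with $\mathfrak{Z}$ (of dimension $1$) and have codimension only $n-1$ in the $n$-dimensional $\square^n_{\widehat{X}_{\red}}$, contradicting \textbf{(SF)} for $F = \square^n_k$ (this is the same observation used in the proof of Lemma \ref{lem:Tor vanishing}). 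Consequently, in the Tate algebra $T_n = k\llp t \rlp\{y_1, \ldots, y_n\} = S[t^{-1}]$ the extension $PT_n$ is a prime of height $n$ in a ring of Krull dimension $n$ (Lemma \ref{lem:Tate}(1)), hence maximal. Applying Lemma \ref{lem:Bosch 17} to $PT_n$ yields polynomials $p_i \in K[y_1, \ldots, y_i]$ with $K = k\llp t \rlp$, each monic in $y_i$ of degree $d_i$, generating $PT_n$; and $p_1 \in k[[t]][y_1]$ already.

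Next I would establish the finiteness of $S/P$ over $k[[t]]$, which yields both the ``in particular'' statement and the ingredients for the triangular generators. Since elements of $S$ have Gauss norm $\leq 1$ in $T_n$, their images in the residue field $L := T_n/PT_n$ (a finite extension of $K$ carrying the unique extended $t$-adic valuation, by Lemma \ref{lem:Tate}(4)) lie in the valuation ring $\mathcal{O}_L$. This gives an injection $S/P \hookrightarrow \mathcal{O}_L$. Since $\mathcal{O}_L$ is a finite module over the complete DVR $k[[t]]$, its $k[[t]]$-submodule $S/P$ is itself finite over $k[[t]]$ by Noetherianness, and being torsion-free it is free of rank $r := [L:K] = \prod_i d_i$. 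A finite module over the complete local ring $k[[t]]$ is automatically $(t)$-adically complete, so the formal spectrum $\Spf(S/P)$ coincides with $\Spec(S/P)$, and $S/P$ is finitely presented over $k[[t]]$ (being finitely generated over a Noetherian base); this establishes the ``in particular'' assertion.

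For the triangular polynomial generators, the integrality of each $\bar{y}_i := y_i + P$ over $k[[t]]$ allows any restricted power series to be rewritten modulo $P$ as a $k[[t]]$-polynomial in the $\bar{y}_i$; equivalently the natural map $R := k[[t]][y_1, \ldots, y_n] \twoheadrightarrow S/P$ is surjective with kernel $J := P \cap R$, so $R/J \cong S/P$. I would then construct monic polynomials $P_i \in k[[t]][y_1, \ldots, y_i]$ of degree $d_i$ in $y_i$ with $P_i(\bar{y}_1, \ldots, \bar{y}_i) = 0$ by modifying the Bosch polynomials $p_i$ inductively: take $P_1 := p_1$, and for $i \geq 2$ replace the $K$-valued coefficients of $p_i$ by representatives in $k[[t]][y_1, \ldots, y_{i-1}]$ congruent modulo $(P_1, \ldots, P_{i-1})$, using Weierstrass-type reduction by the previously constructed monic polynomials $P_1, \ldots, P_{i-1}$ to absorb the $t$-denominators. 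Each $P_i$ then lies in $J$, and by monicity $R/(P_1, \ldots, P_n)$ is free over $k[[t]]$ of rank $\prod_i d_i = r$, matching the rank of $R/J = S/P$; the induced surjection between two free $k[[t]]$-modules of equal rank must be an isomorphism, giving $(P_1, \ldots, P_n) = J$ in $R$, and a fortiori $(P_1, \ldots, P_n) \cdot S = P$. The main obstacle is this last construction: ensuring the modified coefficients actually descend into $k[[t]][y_1, \ldots, y_{i-1}]$ without inflating the $y_i$-degree past $d_i$. This relies crucially on \textbf{(GP)} (which rules out the pathological dimension-collapses at faces that would force genuine $t$-denominators in the Bosch coefficients) together with the monic triangular structure of the $P_j$ already constructed for $j < i$.
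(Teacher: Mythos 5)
Your overall architecture (pass to the generic fibre, apply Lemma \ref{lem:Bosch 17}, then descend to $k[[t]]$) matches the paper's, and several steps are fine: the verification that $t\notin P$ via (\textbf{SF}), the identification of $PT_n$ with a maximal ideal of $T_n$, and the direct proof that $S/P$ is finite and free over $k[[t]]$ through the embedding into $\mathcal{O}_L$ (the paper obtains finiteness only afterwards, in Lemma \ref{lem:Milnor quasi-finite} and Corollary \ref{cor:finite quasi-finite}, as a consequence of this corollary, so your order of deductions is different but not circular). The paper's own proof is much shorter at the last step: it merely clears denominators of the coefficients of the $p_i$ and claims no monicity or degree control here; the normalization you aim for belongs to Proposition \ref{prop:triangular}. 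Your route instead funnels everything through a stronger intermediate claim, namely that there exist $P_i\in k[[t]][y_1,\cdots,y_i]$, monic in $y_i$ of degree exactly $d_i$, vanishing on $\mathfrak{Z}$, followed by a rank count.

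The genuine gap is precisely the step you yourself call ``the main obstacle''. The coefficients of $p_i$ have images in $\mathcal{O}_{L_{i-1}}$, but your construction needs them to lie in the subring $k[[t]][x_1,\cdots,x_{i-1}]$ generated by the earlier coordinates, which may be a strictly smaller (non-maximal) order; Weierstra{\ss} division by the monic $P_j$, $j<i$, only controls $y_j$-degrees and cannot remove genuine $t$-denominators, and (\textbf{GP}) does not exclude them. Concretely, let $n=2$, characteristic of $k$ different from $2$, $a,b\in k\setminus\{0,1\}$, and let $\mathfrak{Z}=\Spf(k[[t]]\{y_1,y_2\}/P)$ where $P$ is the kernel of $y_1\mapsto a+t\sqrt{t}$, $y_2\mapsto b+\sqrt{t}$ into $k[[\sqrt{t}]]$; here $P=\bigl((y_2-b)^2-t,\ y_1-a-t(y_2-b)\bigr)$, the reduction is the single point $(a,b)$, all proper-face intersections are empty, and (\textbf{GP}), (\textbf{SF}), (\textbf{TS}) all hold, so $\mathfrak{Z}\in z^2(\Spf(k[[t]]),2)$. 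Since $x_1=a+t\sqrt{t}$ already generates $L=k\llp t\rlp(\sqrt{t})$, one has $d_2=1$, so your $P_2$ would have to be $y_2-c(y_1)$ with $c\in k[[t]][y_1]$ and $c(x_1)=b+\sqrt{t}$; but $k[[t]][x_1]=k[[t]]\oplus k[[t]]\,t\sqrt{t}$ does not contain $\sqrt{t}$, so no such $c$ exists and the rank-count endgame never starts. Note that the two displayed generators of $P$ are triangular only for the opposite ordering of the variables, so the phenomenon is exactly about whether the $t$-denominators produced by Lemma \ref{lem:Bosch 17} can be eliminated for the given ordering; this is a real issue that any descent argument (including the denominator-clearing step in the paper's proof) must confront explicitly, and invoking (\textbf{GP}) does not resolve it.
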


\begin{proof}
Let $K:= k\llp t\rlp  = {\rm Frac} (k[[t]])$. Write $z_i:= y_i '$ for notational simplicity as well as to be consistent with the notation of Lemma \ref{lem:Bosch 17}.

The cycle $\mathfrak{Z}$ is given by a height $n$ prime ideal $I (\mathfrak{Z})$ of the Tate algebra $T_n=k[[t]]\{ z_1, \cdots, z_n \}$. So, we have the canonical homomorphisms
$$
k[[t]] \hookrightarrow k[[t]] \{ z_1, \cdots, z_n \} \twoheadrightarrow  k[[t]] \{ z_1, \cdots, z_n \}/ I(\mathfrak{Z}),
$$
and, we may regard $\mathfrak{Z}$ as a scheme over $\Spec (k[[t]])$ as well.

Let $\eta \in \Spec (k[[t]])$ be the generic point. The generic fiber $\mathfrak{Z}_{\eta}$ is obtained by the flat base change via $\eta \to \Spec (k[[t]])$, and it gives a height $n$ prime ideal of the Tate algebra $T_n $. Since the Krull dimension of $T_n$ is $n$ (Lemma \ref{lem:Tate}-(1)), this height $n$ prime ideal is maximal. Hence the generic fiber $\mathfrak{Z} _{\eta}$ is defined by a maximal ideal $\mathfrak{m} \subset T_n$. 

By Lemma \ref{lem:Bosch 17}, we then have a set of generators $p_1 (z_1) \in K^{\circ}[z_1]$, $p_2 (z_1, z_2) \in K[z_1, z_2]$, $\cdots$, $ p_n (z_1, \cdots, z_n) \in K [ z_1, \cdots, z_n]$ with the properties there.

 To obtain the generators for its closure $\mathfrak{Z}$, we clear the denominators of the coefficients of the terms of $p_i$ by the l.c.m. in $k[[t]]$. Since $k[[t]]$ is a UFD, this is possible. So obtained elements are denoted by $P_1 (z_1) \in k[[t]][z_1] , \cdots, P_n  (z_1, \cdots, z_n) \in k[[t]] [ z_1, \cdots, z_n ]$.
\end{proof}

In \S \ref{sec:appendix 01}, after proving the finiteness, we will show that the above can be also stated for a sequence of polynomials in $k[[t]]\{ y_1, \cdots, y_n \}$ (not just for $k[[t]] \{ y_1 ', \cdots, y_n '\}$), where the polynomials $P_i$ can satisfy more requirements.

\subsection{Finiteness}\label{sec:appendix 01}

In \S \ref{sec:appendix 01}, we study the generators of $z^n (\Spf (k[[t]]), n)$ from the algebraic perspective to obtain additional properties.

\begin{lem}\label{lem:Milnor quasi-finite}
Let $\mathfrak{Z} \in z^n (\Spf (k[[t]]), n)$ be an integral cycle. 

Then $\mathfrak{Z}$ regarded as a scheme over $k[[t]]$, is quasi-finite over $k[[t]]$. 
\end{lem}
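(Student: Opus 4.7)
The plan is to reduce the quasi-finiteness question to checking that both scheme-theoretic fibers of $\mathfrak{Z} \to \Spec(k[[t]])$ are $0$-dimensional. Since $\Spec(k[[t]])$ has only two points --- the generic point $\eta = \Spec(K)$ corresponding to $K := k\llp t \rlp$ and the closed point $s = \Spec(k)$ --- and since Corollary \ref{cor:pretriangular} has already exhibited $\mathfrak{Z}$ as an affine $k[[t]]$-scheme $\Spec(B)$ of finite presentation, where $B = k[[t]][y_1, \dots, y_n]/(P_1, \dots, P_n)$, zero-dimensionality of each fiber will immediately yield quasi-finiteness.

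For the generic fiber $\mathfrak{Z}_{\eta} = \Spec(B \otimes_{k[[t]]} K)$, I will exploit the specific form of the generators from the proof of Corollary \ref{cor:pretriangular}: one has $P_i = c_i p_i$ for some $c_i \in k[[t]] \setminus \{0\}$, where $p_i \in K[y_1, \dots, y_n]$ is one of the triangular monic polynomials of Lemma \ref{lem:Bosch 17} (each $p_i$ monic in $y_i$, with coefficients of strictly bounded degree in $y_1, \dots, y_{i-1}$). Since each $c_i$ is a unit in $K$, the base change yields
$$
B \otimes_{k[[t]]} K \ = \ K[y_1, \dots, y_n]/(p_1, \dots, p_n).
$$
A successive Euclidean reduction (modulo $p_n$ first, then $p_{n-1}$, and so on) shows this quotient is a free $K$-module of finite rank $\prod_{i=1}^n \deg_{y_i}(p_i)$, and hence $\mathfrak{Z}_{\eta}$ is finite, in particular $0$-dimensional, over $K$.

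For the special fiber $\mathfrak{Z}_s = \Spec(B/tB)$, I will invoke the condition $(\textbf{SF})$ of Definition \ref{defn:HCG} applied to the top face $F = \square_k^n$. Since $\widehat{X}_{\red} = \Spec(k)$, this condition reads
$$
\codim_{\Spec(k) \times \square_k^n} \bigl( \mathfrak{Z} \cap (\Spec(k) \times \square_k^n) \bigr) \ \geq \ n,
$$
which, upon identifying the formal scheme $\mathfrak{Z}$ with $\Spec(B)$ via Corollary \ref{cor:pretriangular}, says exactly that $\Spec(B/tB)$ has dimension at most zero. Being moreover of finite type over $k$, $\mathfrak{Z}_s$ is then finite over $k$. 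Together with the generic fiber calculation, this gives the desired quasi-finiteness. The chief point requiring verification is the compatibility between the scheme-theoretic special fiber $\Spec(B/tB)$ and the cycle-theoretic intersection appearing in $(\textbf{SF})$; this falls out of the scheme-versus-formal-scheme identification established in Corollary \ref{cor:pretriangular}, so no additional argument is required.
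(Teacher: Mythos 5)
There is a genuine gap, and it sits exactly at the point you declare needs no additional argument. Corollary \ref{cor:pretriangular} does not exhibit $\mathfrak{Z}$ as $\Spec\bigl(k[[t]][y_1,\dots,y_n]/(P_1,\dots,P_n)\bigr)$: it says that the ideal generated by $P_1,\dots,P_n$ \emph{in the restricted power series ring} $k[[t]]\{y_1,\dots,y_n\}$ is the prime defining $\mathfrak{Z}$, so the scheme in question is $\Spec\bigl(k[[t]]\{y_1,\dots,y_n\}/(P_1,\dots,P_n)\bigr)$, not the polynomial quotient. Quotients of $k[[t]][y]$ and of $k[[t]]\{y\}$ by the same polynomial generators differ in general: $ty_1-1$ generates the unit ideal in $k[[t]]\{y_1\}$ (its inverse $-\sum_{i\geq 0}t^iy_1^i$ is a restricted power series) but not in $k[[t]][y_1]$; likewise, over $K=k\llp t\rlp$ the monic polynomial $y_1-t^{-1}$ is a unit in $T_1$ while $K[y_1]/(y_1-t^{-1})\neq 0$. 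Hence your Euclidean-reduction computation of the generic fiber computes the fiber of a possibly different scheme; to make it compute $\mathfrak{Z}_\eta$ you would need to prove that the natural map $K[y_1,\dots,y_n]/(p_1,\dots,p_n)\to T_n/\mathfrak{m}$ is an isomorphism. That is in fact true for the particular triangular generators of Lemma \ref{lem:Bosch 17} (their images over the successive residue fields are monic with integral coefficients, so Weierstrass division applies), but it requires a real argument; monicity over $K$ alone does not suffice, as the example $y_1-t^{-1}$ shows. The same unproved identification is what your special-fiber step leans on when you replace the intersection in condition (\textbf{SF}) of Definition \ref{defn:HCG} by $\Spec(B/tB)$ for your polynomial-quotient $B$.

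The paper's proof avoids all of this by working with the actual coordinate ring $k[[t]]\{y_1,\dots,y_n\}/I(\mathfrak{Z})$: its localization at $t$ is $T_n/\mathfrak{m}$ with $\mathfrak{m}$ maximal (a height-$n$ prime of the $n$-dimensional ring $T_n$, Lemma \ref{lem:Tate}), so the generic fiber is literally a single point and no computation with generators is needed; the special fiber is finite by (\textbf{SF}), exactly as you argue. So your special-fiber half matches the paper, but the generic-fiber half, as written, rests on a misreading of Corollary \ref{cor:pretriangular} and must be repaired, either by the Weierstrass-type comparison indicated above or, more simply, by invoking the maximality of $\mathfrak{m}$ in $T_n$.
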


\begin{proof}
By Corollary \ref{cor:pretriangular}, we may regard $\mathfrak{Z}$ as a scheme over $k[[t]]$. 

We check that for each $\mathfrak{p} \in \Spec (k[[t]])$, the fiber $\mathfrak{Z}_{\mathfrak{p}}$ is a finite set. 

As before, via a suitable automorphism of $\mathbb{P}^1$, use $y_i' = y_i / (y_i -1)$ for $1 \leq i \leq n$. The generic fiber $\mathfrak{Z}_{\eta}$ is given by a maximal ideal of the Tate algebra $T_n = k\llp t\rlp \{ y_1', \cdots, y_n'\}$. Hence $|\mathfrak{Z}_{\eta}|= 1 < \infty$.

For the closed point $\mathfrak{m}\in \Spec (k[[t]])$, by the special fiber condition (\textbf{SF}) of Definition \ref{defn:HCG}, this cycle intersects $\mathfrak{m} \times \square^n_k$ properly. Hence $\mathfrak{Z}_{\mathfrak{m}}$ gives a closed subscheme of codimension $\geq n$ in $\square_{\kappa (\mathfrak{m})}^n = \square_k ^n$. Since $\dim \ \square_k ^n = n$, we have $\dim \ (\mathfrak{Z}_{\mathfrak{m}}) \leq 0$ so that $| \mathfrak{Z}_{\mathfrak{m}} |< \infty$ again. 

Thus we proved that $\mathfrak{Z} \to \Spec (k[[t]])$ is quasi-finite, as desired.
\end{proof}

We will refine Lemma \ref{lem:Milnor quasi-finite} even further. To do so, recall the following (See Conrad-Stein \cite[Lemma 8.3]{CS}. See also EGA ${\rm IV}_4$ \cite[Th\'eor\`eme (18.5.11)-(c), p.130]{EGA4-4}):

\begin{lem}\label{lem:fqf over hensel}
Let $A$ be a henselian local ring, and let $G$ be a quasi-finite separated $A$-scheme of finite presentation. 

Then there is a unique decomposition $G= G_f \coprod G'$ into disjoint clopen pieces such that $G_f \to \Spec (A)$ is finite, and $G'$ has the empty special fiber.
\end{lem}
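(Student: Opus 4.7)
The plan is to reduce the statement to two standard facts: Zariski's Main Theorem in the Grothendieck form for quasi-finite separated morphisms of finite presentation, and the structure of finite algebras over a henselian local ring. Since the statement is purely a structural result about $G \to \Spec(A)$, there is no need to revisit the specifics of our cycle set-up.

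First I would apply Zariski's Main Theorem (EGA ${\rm IV}_4$, Th\'eor\`eme (8.12.6); equivalently Stacks Project, Tag 05K0): the morphism $G \to \Spec(A)$ is quasi-finite, separated, and of finite presentation, so it factors as an open immersion $j : G \hookrightarrow \bar G$ followed by a finite morphism $\bar G \to \Spec(A)$. Next I would invoke the classical fact that, since $A$ is henselian local, every finite $A$-algebra decomposes uniquely as a finite product of finite local henselian $A$-algebras; hence we may write
$$
\bar G = \coprod_{i=1}^{r} \Spec(B_i),
$$
with each $B_i$ a finite local henselian $A$-algebra having a unique closed point $\mathfrak{m}_i$ lying over the maximal ideal of $A$, so that these closed points exhaust the special fiber of $\bar G$.

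The key observation is then that each $\Spec(B_i)$ is a local scheme, so the only open subset containing the closed point is the whole thing. Thus for each $i$, the open subset $G_i := G \cap \Spec(B_i)$ either equals $\Spec(B_i)$ (if $\mathfrak{m}_i \in G$) or misses $\mathfrak{m}_i$ entirely (otherwise), in which case $G_i$ has empty special fiber. Setting
$$
G_f := \coprod_{\mathfrak{m}_i \in G} \Spec(B_i), \qquad G' := \coprod_{\mathfrak{m}_i \notin G} G_i,
$$
gives the decomposition; each piece is clopen in $G$ because each $\Spec(B_i)$ is clopen in $\bar G$, and $G_f$ is a finite disjoint union of finite $A$-schemes, hence finite over $\Spec(A)$.

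For uniqueness, suppose $G = H_f \sqcup H'$ is another such decomposition. The intersection $G_f \cap H'$ is an open subscheme of $G_f$ (hence a finite $A$-scheme) and is contained in $H'$, so it has empty special fiber; but a finite scheme over the henselian local $A$ whose special fiber is empty must be empty, because each of its local components contains its closed point. Thus $G_f \cap H' = \emptyset$, and symmetrically $H_f \cap G' = \emptyset$, forcing $G_f = H_f$ and $G' = H'$. The main obstacle I anticipate is purely a matter of bookkeeping: verifying that the version of Zariski's Main Theorem being invoked genuinely requires only separatedness and finite presentation (not finite type over a noetherian base), and that the resulting compactification $\bar G$ is finite rather than merely quasi-finite with proper compactification; this is precisely the content of the Grothendieck form of ZMT and is the only nontrivial input.
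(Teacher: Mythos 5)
Your argument is correct, but it is organized differently from the paper: the paper does not prove this lemma at all, it simply cites Conrad--Stein (Lemma 8.3) and EGA ${\rm IV}_4$, Th\'eor\`eme (18.5.11)-(c), whereas you reconstruct the standard proof of those references from scratch. Your route --- Zariski's Main Theorem in the form ``quasi-finite $+$ separated (of finite presentation) over an affine base factors as an open immersion into a scheme finite over the base'', followed by the decomposition of a finite algebra over a henselian local ring into a product of finite local algebras, and then the observation that an open subset of the spectrum of a local ring containing the closed point is everything --- is exactly how the cited statement is established in the literature, so nothing is lost and the paper's reliance on an external reference is replaced by a self-contained argument. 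One small wording slip in the uniqueness step: you say $G_f \cap H'$ is ``an open subscheme of $G_f$ (hence a finite $A$-scheme)'', but openness alone does not give finiteness (e.g.\ the punctured spectrum of $A$ is open in the finite $A$-scheme $\Spec(A)$). What you actually have, and what you should invoke, is that $G_f$ and $H'$ are both clopen in $G$, so $G_f \cap H'$ is a \emph{clopen} (in particular closed) subscheme of $G_f$; then either note that a clopen piece of a finite $A$-scheme is a direct factor, hence finite over $A$, and conclude by Nakayama that a nonzero finite $A$-algebra has nonempty special fiber, or argue directly that the image of the closed subset $G_f \cap H'$ under the finite (hence closed) map $G_f \to \Spec(A)$ would be a nonempty closed subset of $\Spec(A)$ and so would contain the closed point, contradicting that $H'$ has empty special fiber. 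With that adjustment the existence and uniqueness arguments are complete.
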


An interesting and important point is that the nonzero integral cycles in $z^n (\widehat{X}, n)$ are all finite in the Milnor range:

\begin{cor}\label{cor:finite quasi-finite}
Let $\mathfrak{Z} \in z^n (\Spf (k[[t]]), n)$ be a nonzero integral cycle. Then $\mathfrak{Z}$ is finite over $k[[t]]$.
\end{cor}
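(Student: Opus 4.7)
The plan is to combine Lemma \ref{lem:Milnor quasi-finite}, Corollary \ref{cor:pretriangular}, and the Conrad--Stein decomposition in Lemma \ref{lem:fqf over hensel}, together with the topological support condition (\textbf{TS}) of Definition \ref{defn:HCG}, to upgrade quasi-finiteness to finiteness.

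First, I observe that $k[[t]]$ is a complete local ring, hence henselian, so the hypotheses of Lemma \ref{lem:fqf over hensel} are satisfied for any quasi-finite separated scheme of finite presentation over $\Spec(k[[t]])$. By Corollary \ref{cor:pretriangular}, the integral cycle $\mathfrak{Z}$ can be viewed as a separated $k[[t]]$-scheme of finite presentation, and by Lemma \ref{lem:Milnor quasi-finite} it is quasi-finite over $k[[t]]$. Thus Lemma \ref{lem:fqf over hensel} applies and yields a canonical decomposition
\[
\mathfrak{Z} = \mathfrak{Z}_f \coprod \mathfrak{Z}'
\]
into disjoint clopen pieces, where $\mathfrak{Z}_f \to \Spec(k[[t]])$ is finite and $\mathfrak{Z}'$ has empty special fiber.

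Next, I would use integrality to collapse this decomposition. Since $\mathfrak{Z}$ is integral, it is irreducible and in particular connected, so exactly one of the two pieces is empty. It therefore suffices to rule out the possibility $\mathfrak{Z} = \mathfrak{Z}'$. If that were the case, the special fiber of $\mathfrak{Z}$ over $\Spec(k[[t]])$ would be empty, which is precisely the statement that $Z := \mathfrak{Z} \cap (\widehat{X}_{\red} \times \square_k^n) = \emptyset$, i.e.\ $|Z| = \emptyset$. However, the topological support condition (\textbf{TS}) in Definition \ref{defn:HCG} forces $|\mathfrak{Z}| = |Z|$, and $\mathfrak{Z}$, being a nonzero integral cycle, has nonempty underlying space. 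This contradiction forces $\mathfrak{Z}' = \emptyset$, hence $\mathfrak{Z} = \mathfrak{Z}_f$ is finite over $k[[t]]$.

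The main non-routine ingredient is simply invoking the Conrad--Stein decomposition over the henselian base $k[[t]]$; the rest of the argument is essentially bookkeeping. The key conceptual point worth emphasizing is that condition (\textbf{TS}), which at first looks like a mild technical requirement on reductions modulo $(t)$, is exactly what rules out the ``generic-fiber-only'' component $\mathfrak{Z}'$ and converts quasi-finiteness into finiteness. No serious obstacle is anticipated.
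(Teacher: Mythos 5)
Your proposal is correct and follows essentially the same route as the paper: quasi-finiteness (Lemma \ref{lem:Milnor quasi-finite}) plus separatedness and finite presentation (Corollary \ref{cor:pretriangular}) over the henselian ring $k[[t]]$, then the decomposition of Lemma \ref{lem:fqf over hensel}, with integrality and condition (\textbf{TS}) ruling out the component with empty special fiber. No issues.
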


\begin{proof}
The given integral cycle $\mathfrak{Z}$ is quasi-finite over the henselian local ring $k[[t]]$ by Lemma \ref{lem:Milnor quasi-finite}. Furthermore, it can be regarded as a separated $k[[t]]$-scheme of finite presentation by Corollary \ref{cor:pretriangular}.

Hence we may apply Lemma \ref{lem:fqf over hensel}. By this, $\mathfrak{Z}$ can be written as the disjoint union of the finite part $\mathfrak{Z}_f$, and the part $\mathfrak{Z}'$ that has the empty special fiber. But, $\mathfrak{Z}$ is integral, so we have either (a) $\mathfrak{Z} = \mathfrak{Z}_f$, in which case $\mathfrak{Z} \to \Spec (k[[t]])$ is finite, or (b) $\mathfrak{Z}=\mathfrak{Z}'$, in which case $\mathfrak{Z} \to \Spec (k[[t]])$ is not finite, and the special fiber is empty.

Since the cycle $\mathfrak{Z}$ \emph{does have} the nonempty special fiber (Lemma \ref{lem:TS}), the above case (b) cannot happen. Hence the case (a) is the only possibility and $\mathfrak{Z} \to \Spec (k[[t]])$ is finite. 
\end{proof}

Being finite over $k[[t]]$ have a few important consequences. The first is on intersection with the faces:

\begin{lem}\label{lem:no face}
Let $\mathfrak{Z} \in z^n (\Spf (k[[t]]), n)$ be an integral cycle. Then $\mathfrak{Z}$ does not intersect any proper face of $\Spf (k[[t]]) \times \square^n$. In particular, $\partial (\mathfrak{Z}) = 0$. 
\end{lem}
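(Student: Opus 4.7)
The plan is to leverage the two ``special fiber'' conditions (\textbf{SF}) and (\textbf{TS}) of \defref{defn:HCG}, combined with \lemref{lem:TS face}, to force the set-theoretic intersection of $\mathfrak{Z}$ with every proper face to be empty. Since an arbitrary proper face is contained in some codimension-one face $F = F_i^\epsilon = \{y_i = \epsilon\}$ with $1 \leq i \leq n$ and $\epsilon \in \{0, \infty\}$, it is enough to handle the codimension-one case.

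First, I would record that $\widehat{X}_{\red} = \Spec(k)$, so that $\widehat{X}_{\red} \times F$ is just the scheme $F \subset \square_k^n$, of dimension $n-1$. The (\textbf{SF}) condition for $\mathfrak{Z}$ then demands
\[
\codim_{\widehat{X}_{\red} \times F}\bigl(\mathfrak{Z} \cap (\widehat{X}_{\red} \times F)\bigr) \geq n,
\]
inside an ambient scheme of dimension only $n-1$. This forces the set-theoretic intersection $\mathfrak{Z} \cap (\widehat{X}_{\red} \times F)$ to be empty.

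Next, I would apply \lemref{lem:TS face}, which promotes (\textbf{TS}) to arbitrary faces: the intersection $\mathfrak{Z} \cap (\widehat{X} \times F)$ again satisfies (\textbf{TS}) inside the ambient $\widehat{X} \times F$. Since the largest ideal of definition of $\widehat{X} \times F$ is still $(t)$, its reduction is $\widehat{X}_{\red} \times F$, so (\textbf{TS}) gives
\[
\bigl|\mathfrak{Z} \cap (\widehat{X} \times F)\bigr| \;=\; \bigl|\mathfrak{Z} \cap (\widehat{X}_{\red} \times F)\bigr| \;=\; \emptyset.
\]
Thus $\mathfrak{Z}$ does not meet $\widehat{X} \times F$ at all, so the face cycle $\partial_i^\epsilon \mathfrak{Z}$ vanishes for every $i$ and $\epsilon$, and summing with signs yields $\partial \mathfrak{Z} = 0$.

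I do not foresee a serious technical obstacle here, since the nontrivial bookkeeping — upgrading (\textbf{TS}) from $\mathfrak{Z}$ to its intersection with an arbitrary face — has already been carried out in \lemref{lem:TS face}, and the dimension estimate from (\textbf{SF}) does the rest. The only conceptual point worth highlighting in the writeup is that this vanishing of all face boundaries is specific to the \emph{Milnor range} $q = n$: it is precisely what allows the quotient in \defref{defn:defn Milnor} to collapse to the concrete form \eqref{eqn:defn Milnor}, with the entire group $z^n(\widehat{X}, n)$ (rather than merely $\ker \partial$) appearing in the numerator.
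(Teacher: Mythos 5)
Your argument is correct, and it is genuinely different from the paper's proof. You deduce emptiness of $\mathfrak{Z} \cap (\widehat{X} \times F)$ purely from the two special-fiber conditions: (\textbf{SF}) forces $\mathfrak{Z} \cap (\widehat{X}_{\red} \times F) = \emptyset$ because a nonempty closed subset of the $(\leq n-1)$-dimensional scheme $\widehat{X}_{\red} \times F$ cannot have codimension $\geq n$, and then \lemref{lem:TS face} (i.e.\ (\textbf{TS}) propagated to faces via \lemref{lem:supp2}) identifies $|\mathfrak{Z} \cap (\widehat{X} \times F)|$ with $|\mathfrak{Z} \cap (\widehat{X}_{\red} \times F)|$, so the whole face intersection vanishes and $\partial \mathfrak{Z} = 0$. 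The paper instead first regards $\mathfrak{Z}$ as a scheme finite over $\Spec(k[[t]])$ (\corref{cor:finite quasi-finite}, which rests on the Tate-algebra input of \corref{cor:pretriangular}, the quasi-finiteness \lemref{lem:Milnor quasi-finite}, and the henselian decomposition \lemref{lem:fqf over hensel}), and then argues that a nonempty $\mathfrak{Z} \cap (\widehat{X} \times F)$ would have closed image in $\Spec(k[[t]])$, hence would meet the closed point, contradicting (\textbf{SF}) over the closed fiber; there (\textbf{TS}) enters only implicitly, as the ingredient ruling out the component with empty special fiber in the finiteness argument. Your route is more elementary and self-contained, bypassing the rigid-analytic and finiteness machinery altogether; the paper's route costs more here but its main input (finiteness over $k[[t]]$) is needed elsewhere anyway (e.g.\ \corref{cor:finite free summ} and \propref{prop:triangular}), so nothing is wasted in either approach. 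Your closing remark that this vanishing is what justifies putting all of $z^n(\widehat{X},n)$, rather than $\ker\partial$, in the numerator of \eqref{eqn:defn Milnor} matches the paper's own remark following \defref{defn:defn Milnor}.
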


\begin{proof}
We imitate the argument of Krishna-Park \cite[Lemma 2.21, p.1005]{KP sfs}. Let $\widehat{X}= \Spf (k[[t]])$. Let $F \subsetneq \square_k ^n$ be a proper face. We show that $\mathfrak{Z} \cap (\widehat{X} \times F) = \emptyset$.

Suppose not. As before, we regard $\mathfrak{Z}$ as a scheme over $\Spec (k[[t]])$. By Corollary \ref{cor:finite quasi-finite}, the morphism $\mathfrak{Z} \to \Spec (k[[t]])$ is finite. Consider the composite of finite morphisms
$$ 
\mathfrak{Z} \cap (\widehat{X} \times F) \hookrightarrow \mathfrak{Z} \to \Spec (k[[t]]).
$$
Its image in $\Spec (k[[t]])$ is closed. Since we supposed that $\mathfrak{Z} \cap (\widehat{X} \times F)$ is nonempty, the image must intersect the unique closed point $\mathfrak{m}$ of $\Spec (k[[t]])$. Hence $\mathfrak{Z} \cap (\mathfrak{m} \times F) \not = \emptyset$.

On the other hand, by the special fiber condition (\textbf{SF}) of Definition \ref{defn:HCG} for $\mathfrak{Z}$, the codimension of $\mathfrak{Z} \cap (\mathfrak{m} \times F)$ in $\mathfrak{m} \times F$ is $\geq n$. This is equivalent to saying that
$$ 
\dim \ (\mathfrak{Z} \cap (\mathfrak{m} \times F)) \leq \dim \ (\mathfrak{m} \times F) - n = \dim \ F  - n <^{\dagger} 0,
$$
where $\dagger$ holds because $F$ is a proper face. But this is a contradiction for a nonempty set. Hence we must have $\mathfrak{Z} \cap (\widehat{X} \times F) = \emptyset$. This proves the lemma.
\end{proof}

We discuss additional algebraic properties of the cycles in $z^n (\Spf (k[[t]]), n)$. Recall:

\begin{lem}\label{lem:module-finite}
Let $(R, \mathfrak{m})$ be a henselian (complete, resp.) local ring and let $R \hookrightarrow B$ be a finite extension of rings.

Then $B$ is a finite direct product of henselian (complete, resp.) local rings. Furthermore, there is a bijection between the factors and the maximal ideals of $B$.

In particular, if $B$ is an integral domain in addition to the above assumptions, then $B$ is a henselian (complete, resp.) local domain with a unique maximal ideal.
\end{lem}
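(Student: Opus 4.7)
The plan is to appeal to the standard structure theorem for finite algebras over henselian (resp.~complete) local rings, and then to deduce the bijection with maximal ideals and the integral domain case as immediate corollaries.

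First, I would invoke the following classical fact: for a henselian local ring $(R, \mathfrak{m})$ and any module-finite $R$-algebra $B$, the ring $B$ decomposes as a finite direct product $B = \prod_{i=1}^r B_i$ of local rings, each of which is itself henselian. This is essentially the lifting-of-idempotents characterization of henselian local rings: the quotient $B/\mathfrak{m}B$ is a finite-dimensional algebra over the residue field $R/\mathfrak{m}$, hence an artinian ring, hence a finite product of local artinian rings; the corresponding orthogonal idempotents lift uniquely to $B$ by henselianness. See, e.g., Raynaud's \emph{Anneaux locaux hens\'eliens} or the relevant tags in \cite{stacks}. Each factor $B_i$, being a module-finite $R$-algebra that is already local, is again henselian by the same result.

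For the complete case, I would use that a finite module over a noetherian complete local ring is itself complete in the natural topology induced by $\mathfrak{m}$. Since each $B_i$ is a finite $R$-module and local, the $\mathfrak{m}_{B_i}$-adic topology on $B_i$ agrees with the $\mathfrak{m}$-adic one, so $B_i$ is $\mathfrak{m}_{B_i}$-adically complete. The bijection between the factors and the maximal ideals of $B$ is then immediate from the product decomposition: the maximal ideals of $B = \prod_{i=1}^r B_i$ are precisely the ideals of the form $B_1 \times \cdots \times \mathfrak{m}_{B_i} \times \cdots \times B_r$, one per factor.

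For the integral domain statement, note that if $B$ is a domain, then $B$ admits no nontrivial idempotents, so the product decomposition $B = \prod_{i=1}^r B_i$ forces $r=1$. Hence $B$ itself coincides with a single local factor, which by the preceding paragraphs is henselian (resp.~complete), and in particular has a unique maximal ideal.

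The main ``obstacle'' is purely bibliographic: one must pinpoint the appropriate form of the structure theorem for finite algebras over henselian local rings in the literature. Beyond that citation, every step of the argument is routine commutative algebra, and the complete case is handled in parallel with the henselian case with only a change of adjective.
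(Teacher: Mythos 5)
Your argument is correct and takes essentially the same route as the paper, whose proof is simply a citation of the classical structure theorem (EGA ${\rm IV}_4$, Propositions (18.5.9)--(18.5.10) for the henselian case and Eisenbud, Corollary 7.6 for the complete case); your idempotent-lifting sketch is the standard proof of exactly that cited fact, and the bijection with maximal ideals and the domain case follow as you say. The only minor point is that your complete-case step invokes noetherianness of $R$, which is not in the statement of the lemma but is harmless in this paper, where the lemma is only applied with $R = k[[t]]$ and rings finite over it.
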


\begin{proof}
See EGA ${\rm IV}_4$ \cite[Propositions (18.5.9), (18,5.10), p.130]{EGA4-4} for the henselian case, and D. Eisenbud \cite[Corollary 7.6, p.190]{Eisenbud} for the complete case.
\end{proof}

\begin{cor}\label{cor:finite free summ}
Let $\mathfrak{Z} \in z^n (\Spf (k[[t]]), n)$ be an integral cycle, and let $B$ be the integral domain such that $\mathfrak{Z}=\Spf (B)$.

Then the canonical homomorphism $k[[t]]\to B$ of rings is finite, and $B$ is a complete local integral domain, which is a free $k[[t]]$-module of finite rank.
\end{cor}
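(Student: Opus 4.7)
The proof should just assemble pieces that are already in place; I would present it as three quick assertions and then a verification. First, I would dispose of the finiteness of $k[[t]] \to B$ by directly invoking Corollary \ref{cor:finite quasi-finite}: the corollary says $\mathfrak{Z} \to \Spec(k[[t]])$ is finite, which in ring-theoretic language is exactly the statement that $B$ is a finitely generated $k[[t]]$-module.

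Next, since $B$ is a domain (because $P$ is prime) and a finite algebra over the complete local ring $k[[t]]$, I would apply Lemma \ref{lem:module-finite} in the complete case: the product decomposition there collapses to a single factor under the integral domain hypothesis, giving immediately that $B$ is a complete local integral domain with a unique maximal ideal.

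For the freeness, the natural tool is Lemma \ref{lem:finite free} with $R = k[[t]]$, $M = B$, and $x = t \in \mathfrak{m}$. Two inputs must be verified. First, $t$ is a non-zerodivisor on $B$: since $\mathfrak{Z}$ is integral, it is enough to check $t \notin P$, and this follows from the special fiber condition \textbf{(SF)} of Definition \ref{defn:HCG} applied to the trivial face $F = \square_k^n$, which forces $\mathfrak{Z}\cap(\Spec(k)\times\square_k^n)$ to have codimension $\geq n$ in $\Spec(k)\times\square_k^n$, hence to be a proper closed subscheme of the integral scheme $\mathfrak{Z}$. Second, $B/tB$ is free over $k[[t]]/(t)=k$: but $B/tB$ is a finitely generated module over a field, hence automatically a free $k$-vector space. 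Lemma \ref{lem:finite free} then delivers that $B$ is a free $k[[t]]$-module, and the rank is finite because $B$ is finitely generated.

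The ``hard step," such as it is, is step one of the freeness verification, namely unpacking \textbf{(SF)} to see that $t\notin P$. Everything else is bookkeeping on top of already-proved results, so I do not expect any genuine obstacle in this argument.
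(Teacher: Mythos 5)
Your proposal is correct and follows the same route as the paper's own proof: finiteness from Corollary \ref{cor:finite quasi-finite}, the complete local domain statement from Lemma \ref{lem:module-finite}, and freeness from Lemma \ref{lem:finite free} with $x=t$, where $t$ is a non-zerodivisor by the condition (\textbf{SF}) and $B/tB$ is automatically free over $k$. Your unpacking of why (\textbf{SF}) forces $t\notin P$ is just a more explicit version of the one-line appeal the paper makes, so there is no substantive difference.
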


\begin{proof}
By Corollary \ref{cor:finite quasi-finite}, the ring homomorphism $k[[t]]\to B$ is finite. Here $t$ is a non-zerodivisor on $B$ by the special fiber condition (\textbf{SF}) of Definition \ref{defn:HCG} for $\mathfrak{Z}$. Since $B$ is a finitely generated $k[[t]]$-module, where $t$ is not a zero-divisor in $B$, it is torsion free. Thus, by the fundamental theorem of finitely generated modules over a PID, $B$ is a free $k[[t]]$-module of finite rank.

That $B$ is a complete local integral domain follows from Lemma \ref{lem:module-finite}. 
\end{proof}

\begin{defn}\label{defn:free norm}
For an integral cycle $\mathfrak{Z} \in z^n (\Spf (k[[t]]), n)$, let $k[[t]] \to B$ define $\mathfrak{Z}$ as the above. We can define the norm map $N: B^{\times} \to k[[t]]^{\times}$ as follows.

By Corollary \ref{cor:finite free summ}, $B$ is a free $k[[t]]$-module of finite rank. Hence for each $b \in B$, the left multiplication by $b$
$$
L_b: B \to B, \ \ x \mapsto bx 
$$
is a $k[[t]]$-linear endomorphism of the free $k[[t]]$-module $B$ of finite rank. When $b \in B^{\times}$, the map $L_b$ is an automorphism. We define $N(b)$ to be the determinant $\det (L_b) \in k[[t]]^{\times}$. It is independent of the choice of a $k[[t]]$-basis of $B$, and the map $N$ is multiplicative, by standard linear algebra.
\qed
\end{defn}

The finiteness property of $\mathfrak{Z}$ over $k[[t]]$ has another set of consequences. The following is an analogue in the formal setting of the finiteness criterion in \cite[Lemma 2.9]{KP sfs} :

\begin{lem}\label{lem:finiteness}
Let $\mathfrak{Z} \subset \widehat{X} \times \square^n$ be an integral closed formal subscheme. Consider the open immersion $ \widehat{X} \times \square^n \subset \widehat{X} \times (\mathbb{P}^1)^n$. 

Then the following are equivalent:
\begin{enumerate}
\item The morphism $\mathfrak{Z} \to  \widehat{X}$ is finite.
\item The formal scheme $\mathfrak{Z}$ is closed in $\widehat{X} \times (\mathbb{P}^1)^n$.
\end{enumerate}

\end{lem}

\begin{proof}
The proof is almost identical to that of \cite[Lemma 2.9]{KP sfs}, except that we use the corresponding properties for formal schemes.

(1) $\Rightarrow$ (2) : Consider the factorization $ \mathfrak{Z} \hookrightarrow \widehat{X} \times (\mathbb{P}^1)^n \to \widehat{X}$ of the finite morphism. The second morphism is separated of finite type. Thus by Fujiwara-Kato \cite[Corollary 4.6.15, p.340]{FK}, the first morphism is also finite. An immersion is finite only when it is a closed immersion. This implies (2).

(2) $\Rightarrow$ (1): Since $\mathfrak{Z}$ is closed in $\widehat{X} \times (\mathbb{P}^1)^n$, in particular this inclusion morphism is proper. Thus the composite $\mathfrak{Z} \hookrightarrow \widehat{X} \times (\mathbb{P}^1)^n \to \widehat{X}$ is also proper. This implies that the reduction by an ideal of definition of $\widehat{X}$ is also proper (\cite[Proposition 4.7.3, p.341]{FK}). But both $\mathfrak{Z}$ and $ \widehat{X}$ are affine formal schemes, so their reductions are affine schemes. A morphism between affine schemes is proper if and only if it is finite (\cite[Exercise II-4.6, p.106]{Hartshorne}). But this means the morphism $\mathfrak{Z} \to \widehat{X}$ of formal schemes is also finite (\cite[Proposition 4.2.3, p.325]{FK}). This implies (1).
\end{proof}

\begin{cor}\label{cor:no F_1}
Let $\mathfrak{Z} \subset z^n (\widehat{X}, n)$ be an integral cycle. Consider $(\mathbb{P}^1)^n \supset \square^n$, where $y_1, \cdots, y_n$ are the coordinates.

Then it is closed in $\widehat{X} \times (\mathbb{P}^1)^n$ and it does not intersect any divisor of the form $\{ y_i = \epsilon \}$, where $1 \leq i \leq n$ and $\epsilon = 0, \infty, 1 $. 

In particular, for $\mathbb{A}^n \subset (\mathbb{P}^1)^n$ with the coordinates $y_1, \cdots, y_n$, we can regard $\mathfrak{Z}$ as an integral closed formal subscheme of $\widehat{X} \times \mathbb{A}^n$, and there exists a prime ideal $I \subset k[[t]] \{ y_1, \cdots, y_n \}$ such that 
\begin{equation}\label{eqn:no F_1}
\mathfrak{Z} = \Spf ( k[[t]]\{ y_1, \cdots,y_n \}/ I).
\end{equation}
\end{cor}

\begin{proof}
From Corollary \ref{cor:finite quasi-finite}, we know that $\mathfrak{Z}$ is finite over $k[[t]]$, and by Lemma \ref{lem:finiteness}, we know that $\mathfrak{Z}$ is closed in $\widehat{X} \times (\mathbb{P}^1)^n$. Because $\mathfrak{Z} \subset \widehat{X} \times \square^n$ does not intersect the divisors $\{ y_i = 1 \} \subset ( \widehat{X} \times (\mathbb{P}^1)^n) \setminus ( \widehat{X} \times \square^n)$, in fact $\mathfrak{Z} \subset \widehat{X} \times_k \mathbb{A}_k ^n$, and it is closed. This shows \eqref{eqn:no F_1}.

The remaining statement that $\overline{\mathfrak{Z}} \cap \{ y_i = \epsilon \}$ with $\epsilon = 0, \infty$ follows from Lemma \ref{lem:no face}.
\end{proof}

Hence, we no longer need to use the ugly coordinate changes $y_i \mapsto y_i ' = y_i / (y_i -1)$. The generic fibers also enjoy the same property. Namely, Corollary \ref{cor:no F_1} immediately implies:

\begin{cor}\label{cor:no F_1 generic}
Let $\mathfrak{Z} \in z^n (\widehat{X}, n)$ be an integral cycle. Let $\mathfrak{Z}_{\eta}$ be its generic fiber. Then there exists a maximal ideal $\mathfrak{m} \subset k \llp t \rlp \{ y_1, \cdots, y_n \}$ such that $\mathfrak{Z}_{\eta}$ is given by the affinoid algebra $ k \llp t \rlp \{ y_1, \cdots, y_n \}/ \mathfrak{m}$.
\end{cor}

\subsection{Triangular generators}\label{sec:appendix 02}
For a given integral cycle $\mathfrak{Z} \in z^n (\widehat{X}, n)$, in Corollary \ref{cor:pretriangular} we found $n$ polynomial generators in the variables $y_1', \cdots, y_n '$, where $y_i ' = y_i / (y_i -1)$. While it was enough for our purposes of proving finiteness at that time, we now have obtained the finiteness and some of its consequences. This time, we want to improve it further.

The goal of \S \ref{sec:appendix 02} is to find a nice ``triangular" generating set for each integral cycle $\mathfrak{Z} \in z^n (\Spf (k[[t]]), n)$, this time as polynomials in $y_1, \cdots, y_n$. We will use Corollaries \ref{cor:no F_1} and \ref{cor:no F_1 generic} we obtained in \S \ref{sec:appendix 01} as consequences of the finiteness of $\mathfrak{Z}$ over $k[[t]]$.

\begin{prop}\label{prop:triangular}
Let $\mathfrak{Z} \in z^n (\Spf (k[[t]]), n)$ be an integral cycle. 

Then there are \textbf{polynomials} in $k[[t]] [y_1, \cdots, y_n ] $ of the form
\begin{equation}\label{eqn:triangular}
\tuborg
P_1 (y_1) \in k[[t]] [ y_1 ],\\
\vdots \\
P_n (y_1, \cdots, y_n ) \in k[[t]] [ y_1, \cdots, y_n ],\sluttuborg
\end{equation}
such that 
\begin{enumerate}
\item the ideal $(P_1 (y_1), \cdots, P_n (y_1, \cdots, y_n))$ in $k[[t]] \{ y_1, \cdots, y_n \}$ defines $\mathfrak{Z}$,
\item for each $i$, the polynomial $P_i$ is monic in $y_i$. When $i=1$, the constant term of $P_1$ is in $k[[t]]^{\times}$, while for $2 \leq i \leq n$ and for the image $\bar{P}_i (y_i)$ of $P_i$ in the quotient ring $\left( \frac{k[[t]]\{y_1, \cdots, y_{i-1} \}}{(P_1, \cdots, P_{i-1})}\right) [ y_i]$, its constant term is a unit in $\left( \frac{k[[t]]\{y_1, \cdots, y_{i-1} \}}{(P_1, \cdots, P_{i-1})}\right)^{\times}$, and
\item for $2 \leq i \leq n$, for $P_i$ regarded as a polynomial in $y_i$, its coefficients in $ k[[t]] [ y_1, \cdots, y_{i-1}]$ have their $y_j$-degrees strictly less than $\deg_{y_j} P_j$ for all $1 \leq j < i$.
\end{enumerate}
\end{prop}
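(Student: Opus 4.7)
The content of \propref{prop:triangular} that goes beyond \corref{cor:pretriangular} is the monicity of each $P_i$ in $y_i$ together with the unit-constant-term condition of (2). Both are driven by \lemref{lem:no face}: since $\mathfrak{Z}$ does not meet any face $\{y_i=0\}$ and $B:=k[[t]]\{y_1,\ldots,y_n\}/P$ is a complete local integral domain by \corref{cor:finite free summ}, each $\bar{y}_i$ lies outside the unique maximal ideal $\mathfrak{m}_B$ and is therefore a unit in $B$.

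I would construct the $P_i$'s inductively, simultaneously building up a tower $k[[t]]=A_0 \subset A_1 \subset \cdots \subset A_n=B$ of complete local integral domains with $A_i:=k[[t]][\bar{y}_1,\ldots,\bar{y}_i]\subset B$. At each stage, take $\bar{P}_i\in A_{i-1}[y_i]$ to be the monic polynomial of minimal degree annihilating $\bar{y}_i$ (available because $\bar{y}_i$ is integral over $A_{i-1}$), and lift it to $P_i\in k[[t]][y_1,\ldots,y_i]$ by replacing each coefficient in $A_{i-1}$ by its unique reduced representative in $k[[t]][y_1,\ldots,y_{i-1}]$, of $y_j$-degree strictly less than $\deg_{y_j}P_j$ for $j<i$. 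Such reduced representatives exist uniquely by iterated polynomial division, since each $P_j$ is monic in $y_j$, and this simultaneously enforces the degree condition (3). By construction, $P_i$ is monic in $y_i$.

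The unit-constant-term property is verified by a norm argument in the integral closures. Set $K_{i-1}:=\operatorname{Frac}(A_{i-1})$, $K_i:=K_{i-1}(\bar{y}_i)$, and let $\widetilde{A_{i-1}}$, $\widetilde{A_i}$ denote the respective integral closures---each a DVR, as $A_{i-1}$ and $A_i$ are $1$-dimensional complete local domains. Since $\bar{y}_i^{-1}\in B$ is itself integral over $k[[t]]$ (because $B$ is finite over $k[[t]]$), $\bar{y}_i$ is a unit in $\widetilde{A_i}$, hence $N_{K_i/K_{i-1}}(\bar{y}_i)\in \widetilde{A_{i-1}}^\times$. One then identifies $\bar{P}_i(0)\in A_{i-1}$ with this norm up to sign (possibly up to a further factor from the non-normal case discussed below); and because the inclusion $A_{i-1}\hookrightarrow \widetilde{A_{i-1}}$ is a local homomorphism, namely $\mathfrak{m}_{A_{i-1}}\subset \mathfrak{m}_{\widetilde{A_{i-1}}}$, one has $\widetilde{A_{i-1}}^\times \cap A_{i-1}=A_{i-1}^\times$, so $\bar{P}_i(0)\in A_{i-1}^\times$. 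Finally, Weierstra{\ss} preparation gives $k[[t]]\{y_1,\ldots,y_i\}/(P_1,\ldots,P_i)=A_{i-1}[y_i]/(\bar{P}_i)\cong A_i$, closing the induction; at $i=n$, $A_n=B$ and hence $(P_1,\ldots,P_n)=P$.

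The main technical subtlety is the norm computation when $A_{i-1}$ is not itself a DVR: the generic-fiber minimal polynomial $\mu_i\in K_{i-1}[y_i]$ has coefficients in $\widetilde{A_{i-1}}$ but a priori not in $A_{i-1}$, and the minimal monic $\bar{P}_i$ over $A_{i-1}$ can factor in $K_{i-1}[y_i]$ as $\bar{P}_i=\mu_i\cdot g$ with $g$ a monic polynomial in $K_{i-1}[y_i]$ whose roots are the further Galois conjugates of $\bar{y}_i$ forced by the passage through $\widetilde{A_{i-1}}/A_{i-1}$. Tracking these extra conjugates and confirming that each remains a unit in the corresponding DVR---so that their product $g(0)$, and hence $\bar{P}_i(0)=\mu_i(0)\cdot g(0)$, is a unit---is the heart of the argument; it is, however, still an instance of the principle ``the norm of a unit is a unit'' applied to an enlarged extension.
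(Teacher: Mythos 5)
Your construction breaks precisely at the step that is the actual crux: the claimed isomorphism $A_{i-1}[y_i]/(\bar{P}_i)\cong A_i$, i.e.\ the assertion that the kernel of $A_{i-1}[y_i]\to A_i$, $y_i\mapsto \bar{y}_i$, is generated by a monic polynomial. This is unproved and is false in general when $A_{i-1}$ is not normal, for a rank reason: $A_{i-1}[y_i]/(\text{monic of degree } d)$ is a free $A_{i-1}$-module of rank $d$, while $A_i$ need not be free over $A_{i-1}$ at all. Such non-normal intermediate rings do arise from honest cycles. For instance take $n=2$, $c,c'\in k\setminus\{0,1\}$, and let $\mathfrak{Z}\in z^2(\Spf(k[[t]]),2)$ be defined by the kernel of $k[[t]]\{y_1,y_2\}\to k[[s]]$, $t\mapsto s^2$, $y_1\mapsto c+s^3$, $y_2\mapsto c'+s$; this is integral, finite over $k[[t]]$, misses all proper faces, and its special fiber is the fat point at $(c,c')$, so (\textbf{GP}), (\textbf{SF}), (\textbf{TS}) are readily checked. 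Here $A_1=k[[t]][\bar{y}_1]=k[[s^2,s^3]]$ and $A_2=B=k[[s]]$ has rank $2$ over $k[[t]]$, whereas $A_1[y_2]/(\bar{P}_2)$ has rank $2\deg_{y_2}\bar{P}_2$ over $k[[t]]$; degree $\geq 2$ is therefore too big, and degree $1$ would force $c'+s\in A_1$, which fails. Equivalently, the kernel of $A_1[y_2]\to k[[s]]$ contains the degree-one element $t(y_2-c')-(y_1-c)$, which lies in no ideal generated by a monic polynomial. So for your $P_i$'s one only gets $(P_1,\dots,P_n)\subsetneq P$, and condition (1) fails; note also that your ``unique reduced representative'' lifting step already presupposes the disputed isomorphism one stage earlier, since it amounts to saying that the reduced monomials in $\bar{y}_1,\dots,\bar{y}_{i-1}$ form a $k[[t]]$-basis of $A_{i-1}$. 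This is exactly the difficulty the paper's argument is organized to avoid: it produces the triangular generators on the generic fiber, inside the Tate algebra $T_n$ over $k\llp t\rlp$ (Lemma \ref{lem:Bosch 17}), where every intermediate quotient $T_i/\mathfrak{m}_i$ is a field and kernels are generated by minimal polynomials, and only afterwards clears denominators and uses the geometry of the projections $\mathfrak{Z}^{(i)}$ (Lemma \ref{lem:no face}) to obtain the unit leading coefficients and unit constant terms; no generation statement over the possibly non-normal rings $A_{i-1}$ is ever invoked.

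A secondary point: even within your framework, the unit-constant-term argument is incomplete, as you acknowledge. The extra roots of $\bar{P}_i$ beyond the $K_{i-1}$-conjugates of $\bar{y}_i$ are not Galois conjugates of $\bar{y}_i$, and the norm formalism alone does not force them to be units; the statement genuinely needs minimality of $\deg\bar{P}_i$. A clean repair avoiding norms: if $\bar{P}_i(0)\in\mathfrak{m}_{A_{i-1}}$, then modulo $\mathfrak{m}_{A_{i-1}}$ one has $\bar{P}_i\equiv y_i^m h$ with $h(0)\neq 0$ and $m\geq 1$, so Hensel's lemma over the complete local ring $A_{i-1}$ factors $\bar{P}_i=Q_0Q_1$ with $Q_0,Q_1$ monic, $Q_0\equiv y_i^m$, $Q_1\equiv h$; since $\bar{y}_i$ is a unit of the local domain $A_i$, $Q_0(\bar{y}_i)$ is a unit, hence $Q_1(\bar{y}_i)=0$ with $\deg Q_1<\deg\bar{P}_i$, contradicting minimality. (The paper instead reads off both unit conditions from the empty intersections of $\mathfrak{Z}^{(i)}$ with $\{y_i=0\}$ and $\{y_i=\infty\}$.) But this fix does not rescue the proposal: the missing generation statement of the previous paragraph is where the proof fails.
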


\begin{proof}
Let $\eta \in \Spec (k[[t]])$ be the generic point. By Corollary \ref{cor:no F_1 generic}, the generic fiber $\mathfrak{Z} _{\eta}$ is given by a maximal ideal $\mathfrak{m} \subset T_n=  k\llp t\rlp  \{ y_1, \cdots, y_n \}$. 

Applying Lemma \ref{lem:Bosch 17} to $\mathfrak{m}$ with $z_i= y_i$ for $1 \leq i \leq n$, we obtain a triangular shaped set of polynomials $p_1 (y_1) \in k[[t]][y_1],  p_2 (y_1, y_2) \in k \llp t \rlp [y_1, y_2], \cdots, p_n (y_1, \cdots, y_n) \in k \llp t \rlp [y_1, \cdots, y_n]$ that generate the maximal ideal $\mathfrak{m} \subset T_n$, and satisfy the properties there in the lemma.

 To obtain the generators for its closure $\mathfrak{Z}$, for each $1 \leq i \leq n$ we clear the denominators of the coefficients in $k \llp t \rlp$ of the terms of $p_i$ by the l.c.m.~of them in $k[[t]]$. We can do it because $k[[t]]$ is a UFD. So obtained elements are denoted by $P_1 (y_1) \in k[[t]] [y_1], \cdots, P_n  (y_1, \cdots, y_n) \in k[[t]] [ y_1, \cdots, y_n ]$, and they satisfy the properties that
 \begin{enumerate}
\item [(a)] the ideal $(P_1, \cdots, P_n)$ generated in $k[[t]]\{y_1, \cdots, y_n\}$ defines $\mathfrak{Z}$,
\item [(b)] for each $i$, the highest $y_i$-degree term of $P_i$ involves no other variable, and
\item [(c)] for $2 \leq i \leq n$, when $P_i$ is regarded as a polynomial in $y_i$, each of its coefficients in $k[[t]][y_1, \cdots, y_{i-1}]$ has its $y_{j}$-degree strictly less than $\deg_{y_j} P_j$ for all $ 1 \leq j < i$.
 \end{enumerate}

The conditions (1) and (3) of the proposition are satisfied by the above properties (a) and (c), but the condition (2) is not yet achieved. We show that (2) can be achieved for $\mathfrak{Z}$. 

\bigskip

For $1 \leq i \leq n$, write $S_i:= k[[t]] \{ y_1, \cdots, y_i \}$. Let $\mathfrak{p}_n \subset S_n$ be the prime ideal that defines $\mathfrak{Z}$ (by Corollary \ref{cor:no F_1}). For the sequence of injective ring homomorphisms
$$
k[[t]] \hookrightarrow S_1 \hookrightarrow \cdots \hookrightarrow S_{n-1} \hookrightarrow S_n,
$$
consider the prime ideals $\mathfrak{p}_i:= \mathfrak{p}_n \cap S_i$ for $1 \leq i \leq n-1$. By construction, $\mathfrak{p}_i = (P_1, \cdots, P_i)$, and 
$$
k[[t]] \hookrightarrow S_1/ \mathfrak{p}_1 \hookrightarrow \cdots \hookrightarrow S_{n-1}/\mathfrak{p}_{n-1} \hookrightarrow S_n/\mathfrak{p}_n.
$$

Since $k[[t]] \to S_n/ \mathfrak{p}_n$ is finite and free (Corollary \ref{cor:finite free summ}), and $k[[t]]$ is a PID, all homomorphism $k[[t]] \to S_i / \mathfrak{p}_i$ for $1 \leq i \leq n$ are also finite and free because a submodule of a free module of finite rank over a PID is again free by the fundamental theorem on finitely generated modules over a PID. 

We let $\mathfrak{Z} ^{(i)} \subset \Spf (k[[t]]) \times \square^i$ be the integral closed formal subscheme given by $\Spf (S_i / \mathfrak{p}_i)$. By construction, its dimension is equal to that of $\mathfrak{Z}$, so that its codimension is $i$. This $\mathfrak{Z}^{(i)}$ is equal to the image of $\mathfrak{Z}$ under the projection $pr_i: \Spf (k[[t]]) \times \square^n \to \Spf (k[[t]]) \times \square^i $, that sends $(y_1, \cdots, y_n) \mapsto (y_1, \cdots, y_i)$. 

\medskip

\textbf{Claim:} \emph{$\mathfrak{Z}^{(i)} \in z^i (\Spf (k[[t]]), i)$ for $1 \leq i \leq n$.}

\medskip

For each codimension $1$ face $F= F_j ^{\epsilon} \subset \square_k ^n$ with $j \leq i$, the intersection $\mathfrak{Z}^{(i)} \cap (\widehat{X} \times F)$ is the image of $\mathfrak{Z} \cap (\widehat{X} \times F)$ under $pr_i$. But the latter is empty by Lemma \ref{lem:no face}, thus so is the former. This means $\mathfrak{Z}^{(i)}$ has the empty intersection with all proper faces. In particular, the general position condition (\textbf{GP}) of Definition \ref{defn:HCG} for $\mathfrak{Z}^{(i)}$ holds trivially.

Since $\mathfrak{Z}^{(i)}$ has the empty intersection with all proper faces, to check the condition (\textbf{SF}) for $\mathfrak{Z}^{(i)}$, it remains to take $F= \square_k ^n$, and we need to look at its special fiber. Since $\mathfrak{Z}^{(i)}$ is finite over $k[[t]]$ and the special fiber is given by its unique closed point by Lemma \ref{lem:module-finite}, the condition (\textbf{SF}) holds for $\mathfrak{Z}^{(i)}$. 
Thus $\mathfrak{Z}^{(i)} \in z^i (\Spf (k[[t]]), i)$.

\bigskip

We prove the remaining part (2) of the proposition.

Suppose $i=1$. Then the intersection of $\mathfrak{Z}^{(1)}$ with the faces $\{ y_1 = \infty \}$ and $\{ y_1 = 0 \}$ are empty by Lemma \ref{lem:no face} and Claim. Thus the leading coefficient and the constant term of $P_1 (y_1)$ in $y_1$ are both units in $k[[t]]^{\times}$. After scaling by the leading coefficient of $P_1 (y_1)$ in $y_1$, the leading coefficient becomes $1$, and the constant term is still a unit in $k[[t]]^{\times}$, proving (2) for $i=1$.

Let $i  \geq 2$. Among the defining polynomials $P_1, \cdots, P_i$ of $\mathfrak{Z}^{(i)}$, the only one that involves the variable $y_i$ is $P_i$. Furthermore, the highest $y_i$-degree term of $P_i$ does not involve any other variables. Hence the empty intersection of $\mathfrak{Z}^{(i)}$ with the codimension $1$ face $\{ y_i = \infty\}$ (Lemma \ref{lem:no face} and the above Claim) means that the leading coefficient of $P_i$ in $y_i$ is a unit in $k[[t]]^{\times}$. Hence after scaling by the leading coefficient of $P_i$ in $y_i$, the leading coefficient becomes $1$, so that we may assume $P_i$ is monic in $y_i$. 

Note that scaling by units of $k[[t]]^{\times}$ does not disturb the pre-existing properties (1), (3).

On the other hand, that the intersection of $\mathfrak{Z}^{(i)}$ with $\{y_i = 0 \}$ is empty means that the image $\bar{P}_i$ of $P_i$ in the quotient ring $\left( \frac{k[[t]]\{y_1, \cdots, y_{i-1} \}}{(P_1, \cdots, P_{i-1})}\right) [ y_i]$ has a unit constant term, when regarded as a polynomial in $y_i$. 

 Thus we have achieved all of (1), (2), (3). 
\end{proof}

\section{The graph homomorphisms and the regulators}\label{sec:Milnor}
 
\subsection{Milnor $K$-theory and the graph homomorphism}\label{sec:Milnor2}

We discuss the graph homomorphisms from the Milnor $K$-groups. Recall from Elbaz-Vincent--M\"uller-Stach \cite[Lemma 2.1]{EVMS} that for an equidimensional $k$-algebra $R$ essentially of finite type, there is the graph homomorphism
$$
gr:K_n ^M (R) \to \CH^n (\Spec (R), n).
$$
We have a similar homomorphism for the higher Chow groups of some affine formal schemes. To avoid unnecessary complexities, we consider the relevant case only:

\begin{lem}\label{lem:gr map}
Consider $\widehat{X}= \Spf (k[[t]])$. For each Milnor symbol $\{ a_1, \cdots, a_n \} \in K_n ^M (k[[t]])$, with $a_i \in k[[t]]^{\times}$, consider the integral closed formal subscheme $\Gamma_{(a_1, \cdots, a_n )} \subset \square_{\widehat{X}} ^n$ defined by the system of linear polynomials
$$
  y_1 - a_1,  \ \ \cdots,  \ \ y_n - a_n.
 $$

Then sending $\{a_1, \cdots, a_n \}$ to the graph cycle $\Gamma_{(a_1, \cdots, a_n)}$ defines the graph homomorphism
\begin{equation}\label{eqn:Milnor0}
gr: K_n ^M (k[[t]]) \to \CH^n (\Spf (k[[t]]), n).
\end{equation}
\end{lem}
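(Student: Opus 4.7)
The plan has two main parts: first, verify that each graph cycle $\Gamma_{(a_1, \ldots, a_n)}$ is a valid closed element of $z^n(\widehat{X}, n)$; second, show that the rule $\{a_1, \ldots, a_n\} \mapsto [\Gamma_{(a_1, \ldots, a_n)}]$ descends through the defining relations of Milnor $K$-theory, namely $\mathbb{Z}$-multilinearity in each slot and the Steinberg relation, modulo $\partial$.

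For the first part, the ideal $(y_1 - a_1, \ldots, y_n - a_n) \subset k[[t]]\{y_1, \ldots, y_n\}$ is maximal of height $n$, with quotient $k[[t]]$, so $\Gamma := \Gamma_{(a_1, \ldots, a_n)} \cong \widehat{X}$ as an integral closed formal subscheme. Since each $a_i \in k[[t]]^\times$, both $a_i$ and its reduction $a_i(0) \in k^\times$ avoid $\{0, \infty\}$, so $\Gamma$ meets no proper face either over $\widehat{X}$ or over $\widehat{X}_{\red}$; this yields (\textbf{GP}) and (\textbf{SF}) at once. Condition (\textbf{TS}) is automatic because both $|\Gamma|$ and $|\Gamma \cap (\widehat{X}_{\red} \times \square_k^n)|$ are single points. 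Finally, Lemma \ref{lem:no face} gives $\partial \Gamma = 0$, so $\Gamma$ defines a class in $\CH^n(\widehat{X}, n)$, and the assignment extends $\mathbb{Z}$-linearly to a map from the tensor algebra $T^*(k[[t]]^\times)$.

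For the second part, adapting the classical constructions of Nesterenko--Suslin and Totaro, we exhibit explicit witness cycles in $\square_{\widehat{X}}^{n+1}$ of codimension $n$ whose boundaries realize the required relations. Concretely, for multilinearity $\Gamma_{(a_1, \ldots, a_i b_i, \ldots, a_n)} \equiv \Gamma_{(a_1, \ldots, a_i, \ldots, a_n)} + \Gamma_{(a_1, \ldots, b_i, \ldots, a_n)}$, we take a codimension-$n$ cycle with $y_j = a_j$ fixed for $j \neq i$ and a rational curve coupling $y_i$ and $y_{n+1}$ (such as the closure of $y_i y_{n+1} = a_i b_i$ with an appropriate linear correction). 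Its codimension-one face restrictions at $\{y_i = 0, \infty\}$ and $\{y_{n+1} = 0, \infty\}$ then compute, up to signs, to the three desired graphs, yielding the relation after cancelation. For the Steinberg relation, applied when both $a$ and $1-a$ are units of $k[[t]]$, we form the analogous cycle supported on the standard Steinberg curve $\{y_i + y_{n+1} = 1\}$ (with the other coordinates fixed), whose boundary exhibits $\Gamma_{(\ldots, a, 1-a, \ldots)}$ as $\partial$ of a class in $z^n(\widehat{X}, n+1)$.

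The main technical obstacle, not present in the classical (field-theoretic) case, is verifying that these witness cycles genuinely lie in $z^n(\widehat{X}, n+1)$, i.e., satisfy (\textbf{GP}), (\textbf{SF}), and (\textbf{TS}). The key point is that the coefficients $a_i, b_i$ are units of $k[[t]]$, so $t$ remains a non-zerodivisor on the relevant quotient rings; combined with Lemma \ref{lem:Tor vanishing}, this ensures the reductions modulo $t$ are precisely the analogous classical cycles over $k$, where the codimensions and topological supports are already well understood. In this way the conditions (\textbf{SF}) and (\textbf{TS}) propagate from the generic fiber (controlled by the Tate algebra over $k\llp t \rlp$ via Lemma \ref{lem:Tate} and Proposition \ref{prop:triangular}) to the entire cycle over $\widehat{X}$, completing the verification of the Milnor relations and hence of the graph homomorphism.
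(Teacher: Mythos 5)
Your first part (admissibility of the graph cycles and $\partial\Gamma=0$ via Lemma \ref{lem:no face}) matches what the paper does, and your multilinearity step is in the same spirit as the paper's witness: the paper uses the parametrized curve $\mathfrak{W}_2\colon x \mapsto \bigl( x, \tfrac{ax-ab}{x-ab}, a_2, \cdots, a_n \bigr)$, which is exactly the kind of ``corrected hyperbola'' you allude to, so that part of your sketch is completable.

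The genuine gap is your Steinberg witness. The cycle cut out by $\{ y_i + y_{n+1} = 1 \}$ with the remaining coordinates frozen cannot work, for two concrete reasons. First, the element $a$ does not appear anywhere in its defining equations, so no face of it can equal $\Gamma_{(\ldots, a, 1-a, \ldots)}$: its faces in the $i$ and $n+1$ directions occur at the coordinate values $0,\infty$ (giving the partner value $1$ or $\infty$), never at $a$. Second, it is not even an admissible element of $z^n(\widehat{X}, n+1)$: the closure of $\{y_i + y_{n+1}=1\}$ in $\mathbb{P}^1\times\mathbb{P}^1$ passes through $(\infty,\infty)$, so the cycle meets the codimension $2$ face $\{y_i=\infty,\ y_{n+1}=\infty\}$ in a locus of dimension $1$ over $\widehat{X}$, violating (\textbf{GP}) (and likewise (\textbf{SF})) of Definition \ref{defn:HCG}. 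The correct witness, as in the paper (following Totaro and Elbaz-Vincent--M\"uller-Stach), is the closure of the graph of $x \mapsto \bigl( x, 1-x, \tfrac{a-x}{1-x}, a_3, \cdots, a_n \bigr)$: the third coordinate is what cuts out the parameter value $x=a$ as the unique nonempty face, namely $\partial_3^0 = \Gamma_{(a,1-a,a_3,\cdots,a_n)}$, all other faces being empty because they would force some coordinate to equal $1$.

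A secondary point: your last paragraph argues that (\textbf{GP}), (\textbf{SF}), (\textbf{TS}) for the witness cycles ``propagate from the generic fiber'' using Lemma \ref{lem:Tor vanishing} and Proposition \ref{prop:triangular}. That reasoning is circular or misapplied: Lemma \ref{lem:Tor vanishing} assumes the cycle already lies in $z^*(\widehat{X},n)$, and Proposition \ref{prop:triangular} concerns the Milnor-range cycles in $z^n(\widehat{X},n)$ (finite over $k[[t]]$), not the relative curves in $z^n(\widehat{X},n+1)$. Moreover (\textbf{SF}) and (\textbf{TS}) are conditions at the special fiber and are not consequences of the generic fiber; for the explicit witnesses one simply checks them directly, using that all coefficients involved ($a$, $1-a$, $ab$, $a_i$, $b_i$) are units of $k[[t]]$, so the face and special-fiber computations are the evident ones.
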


\begin{proof}
One checks immediately that $\Gamma_{(a_1, \cdots, a_n) } \in z^n (\Spf (k[[t]]), n)$. By Lemma \ref{lem:no face}, this represents a class in $\CH^n (\Spf (k[[t]]), n)$. 

The rest of the argument is essentially a repetition of \cite[Lemma 2.1]{EVMS}. We remark that \emph{loc.cit.}~makes a general assumption that their ring is essentially of finite type, but this is not used there, so we can still follow a large part of the argument. 

Firstly, for $a, a_i \in k[[t]]^{\times}$ such that $1-a \in k[[t]]^{\times}$ for $3 \leq i \leq n$, consider the parametrized cycle given by the closure of the graph of the rational map
$$
\mathfrak{W}_1: \square_{\widehat{X}} ^1 \dashedrightarrow \square_{\widehat{X}} ^{n+1}, \ \ x \mapsto \left( x, 1-x, \frac{ a-x}{1-x}, a_3, \cdots, a_n \right).
$$
One checks $\mathfrak{W}_1 \in z^n (\Spf (k[[t]]), n+1)$. By straightforward calculations, one sees that the only nonzero face of $\mathfrak{W}_1$ is 
$$
\partial_3 ^0 \mathfrak{W}_1 = \Gamma_{(a, 1-a, a_3, \cdots, a_n)}.
$$
In particular, the Steinberg symbol $\{ a, 1-a, a_3, \cdots, a_n \}\in K_n ^M (k[[t]])$ maps to $0$ in $\CH^n (\Spf (k[[t]]), n)$.

Secondly, for $a, b, a_i \in k[[t]]^{\times}$ for $2 \leq i \leq n$, consider the parametrized cycle
$$
\mathfrak{W}_2: \square_{\widehat{X}} ^1 \dashedrightarrow \square_{\widehat{X}} ^{n+1}, \ \ x \mapsto \left( x, \frac{ ax - ab}{x-ab}, a_2, \cdots, a_n \right).
$$
One checks $\mathfrak{W}_2 \in z^n (\Spf (k[[t]]), n+1)$, and that its only nonzero faces are
$$
\tuborg
\partial_1 ^{\infty} \mathfrak{W}_2 = \Gamma_{(a, a_2, \cdots, a_n)}, \\
\partial_2 ^0 \mathfrak{W}_2 = \Gamma_{(b, a_2, \cdots, a_n)} , \\
\partial_2 ^{\infty} \mathfrak{W}_2 = \Gamma_{(ab, a_2, \cdots, a_n)}.\sluttuborg
$$
Hence killing $\partial \mathfrak{W}_2$ in $\CH^n (\Spf (k[[t]]), n)$, we deduce
$$
\Gamma_{(ab, a_2, \cdots, a_n)} \equiv \Gamma_{(a, a_2, \cdots, a_n)} + \Gamma_{(b, a_2, \cdots, a_n)} \ \ \mbox{ in } \CH^n (\Spf (k[[t]]), n).
$$

Considering various permutations of the cycle $ \mathfrak{W}_2$, we thus deduce
$$
\bigotimes_{i=1} ^n k[[t]]^{\times} \to \CH^n (\Spf (k[[t]]), n),
$$
and then using the permutations of $\mathfrak{W}_1$, it further descends to \eqref{eqn:Milnor0}.
\end{proof}

\begin{lem}\label{lem:Milnor--0}
Let $m \geq 2$. 
The graph map \eqref{eqn:Milnor0} induces the  homomorphism
\begin{equation}\label{eqn:Milnor-1}
gr_{k_m}: K_n ^M (k_m) \to \BGH^n (\Spec (k_m), n),
\end{equation}
where the codomain is the new higher Chow group defined in Definition \ref{defn:defn Milnor}. 
\end{lem}

\begin{proof}
Since $k[[t]]$ is a local ring, the natural homomorphism $k[[t]]^{\times} \to k_m ^{\times}$ is surjective (see e.g. Hartshorne-Polini \cite[Lemma 5.2]{HP}). Thus $K_n ^M (k[[t]]) \to K_n ^M (k_m)$ is also surjective. Hence to see that \eqref{eqn:Milnor0} descends to \eqref{eqn:Milnor-1}, it remains to check that $\ker (K_n ^M (k[[t]]) \to K_n ^M (k_m))$ is mapped to $0$ under the graph map \eqref{eqn:Milnor0} composed with the natural map $\CH^n (\Spf (k[[t]]), n) \to \BGH^n (\Spec (k_m), n)$. 

Since the above kernel is generated by elements of the form $\{ 1 + t^m c, c_2, \cdots, c_n \}$, where $c\in k[[t]]$ and $c_i \in k[[t]]^{\times}$ for $2 \leq i \leq n$ (see Kato-Saito \cite[Lemma 1.3.1, p.261]{KS}), it is enough to show that 
\begin{equation}\label{eqn:KS graph}
\Gamma_{ ( 1 + t^m c, c_2, \cdots, c_n )} \equiv 0 \ \ \mbox{ in } \BGH^n (\Spec (k_m), n),
\end{equation}
where $\Gamma_{ ( 1 + t^m c, c_2, \cdots, c_n )} $ is the graph cycle in $z^n (\widehat{X}, n)$ of the above Milnor symbol, defined in Lemma \ref{lem:gr map}.

\medskip

Before we do so, we first prove:

\textbf{Claim:}\emph{
Let $a_i, b_i \in k[[t]]^{\times}$ such that $a_i - b_i \in (t^m)k[[t]]$ for $1 \leq i \leq n$. Then have 
$$\Gamma_{a} - \Gamma_b \in \mathcal{M}^n (\widehat{X}, Y, n),$$
where $\Gamma_a:= \Gamma_{(a_1, \cdots, a_n)}$ and $\Gamma_b:= \Gamma_{(b_1, \cdots, b_n)} $ denote the corresponding integral graph cycles in $z^n (\widehat{X}, n)$. In particular, $\Gamma_a \equiv \Gamma_b$ in $ \BGH^n (\Spec (k_m), n)$.
}

\medskip

 This claim is immediate because we have an isomorphism
$$
\mathcal{O}_{\Gamma_a} \otimes_{\mathcal{O}_{\square_{\widehat{X}} ^n}} \mathcal{O}_{\square_Y^n} \simeq 
\mathcal{O}_{\Gamma_b} \otimes_{\mathcal{O}_{\square_{\widehat{X}} ^n}} \mathcal{O}_{\square_Y^n}
$$
of $\mathcal{O}_{\square_Y^n}$-algebras.

\medskip

Going back to the proof of \eqref{eqn:KS graph}, let $d_i:= \bar{c}_i \in k^{\times}$ for $2 \leq i \leq n$. Via the obvious injective homomorphism $k^{\times} \hookrightarrow k[[t]]^{\times}$, regard $d_i \in k[[t]]^{\times}$. Then we have
$$
1 + t^m c \equiv 1, \  \ c_2 \equiv d_2 , \  \ \cdots ,  \ \ c_n \equiv d_n \ \mod (t^m) k[[t]].
$$
Thus by the Claim, we have
\begin{equation}\label{eqn:KS graph-1}
\Gamma_{ ( 1 + t^m c, c_2, \cdots, c_n )} \equiv \Gamma_{(1, d_2, \cdots, d_n ) } \ \mbox{ in } \BGH^n (\Spec (k_m), n).
\end{equation}
However, $\square= \mathbb{P}^1 \setminus \{1 \}$ so that $\Gamma_{ (1, d_2, \cdots, d_n) } = \emptyset$ in $\square_{\widehat{X}} ^n$. Thus \eqref{eqn:KS graph-1} proves \eqref{eqn:KS graph}. This induces \eqref{eqn:Milnor-1}.\end{proof}

\bigskip

The goal of this article, Theorem \ref{thm:main intro}, is to prove that \eqref{eqn:Milnor-1} is an isomorphism when $|k|\gg 0$. It will be completed in Theorem \ref{thm:k_m main}.

\subsection{Regulators via rigid analytic spaces}\label{sec:suslin}
In \S \ref{sec:suslin}, we construct a homomorphism
\begin{equation}\label{eqn:cycle K regulator}
 \widehat{\Upsilon}: \CH^n (\Spf (k[[t]]), n) \to K_n ^M ( k\llp t\rlp ).
 \end{equation}

Under the additional assumption that $|k| \gg 0$, the homomorphism $K_n ^M (k[[t]]) \to K_n ^M (k\llp t\rlp )$ is injective (Gersten conjecture for $K_n ^M$ proven by M. Kerz \cite[Proposition 10, p.181]{Kerz finite}). We will see that the image of $\widehat{\Upsilon}$ lies in $K_n ^M (k[[t]])$.

\bigskip

To facilitate our discussions, we introduce a rigid analytic analogue of higher Chow groups in the Milnor range (see also J. Ayoub \cite[Notations 2.4.14, p.297 and beyond]{Ayoub}):

\begin{defn}\label{defn:HCG rigid}
Let $K$ be a complete discrete valued field with a nontrivial non-archimedean norm. Let $K^{\circ}$ be its valuation ring and suppose $k \subset K^{\circ}$. We define the rigid analytic analogue of higher Chow groups for $\Sp (K)$ for the Milnor range, but its generalization to the off-Milnor ranges is similar.

We consider the rigid analytic space $\Sp (K) \times \square_k ^n$. 
(The fiber product exists by Bosch-G\"untzer-Remmert \cite[\S 7.1.4, Proposition 4, p.268]{BGR}.) We define $z^n (\Sp (K), n)$ to be the free abelian group on the codimension $n$ points not lying on $\{y_i = 0, 1, \infty\}$ for $1 \leq i \leq n$, and define $\un{z}^n (\Sp (K) , n+1)$ be the free abelian group on the codimension $n$ integral closed subsets that intersect all faces of $\Sp (K) \times \square_k ^{n+1}$ properly, where the faces are similarly defined as in \S \ref{sec:HC}. We define $z^n (\Sp (K), *)$ for $*=n, n+1$ by modding out $\un{z}^n (\Sp (K), *)$ by the degenerate cycles. Via the closed immersions $\iota_i ^{\epsilon}: \Sp (K) \times \square_k ^{n} \hookrightarrow \Sp (K) \times \square_k ^{n+1}$ for $1 \leq i \leq n+1$ and $\epsilon = 0, \infty$, we have the natural boundary map $\partial: {z}^n (\Sp (K), n+1) \to z ^n (\Sp (K), n)$ defined by $\partial = \sum_{i=1} ^{n+1} (-1)^i ( \partial_i ^{\infty} - \partial_i ^0)$, where $\partial_i ^{\epsilon} = (\iota_i ^{\epsilon})^*$. Its cokernel is denoted by $\CH ^n (\Sp (K), n)$. \qed
\end{defn}

\begin{exm}\label{exm:loc}
If we take $K= k\llp t\rlp $ and an integral cycle $\mathfrak{Z} \in \CH^n (\Spf (k[[t]]),n)$, then the generic fiber $\mathfrak{Z}_{\eta}$ gives a member of ${\CH} ^n ( \Sp (k\llp t\rlp ), n)$. With Corollaries \ref{cor:no F_1} and \ref{cor:no F_1 generic} in mind, note that the localization homomorphisms $k[[t]] \{ y_1, \cdots, y_n \} \to k\llp t\rlp \{ y_1, \cdots, y_n \}$ over $n \geq 1$ induce the commutative diagram
$$
\xymatrix{
z^n (\Spf (k[[t]]), n+1) \ar[d] ^{\partial} \ar[r] & z^n ({\rm Sp} (k\llp t\rlp ), n+1) \ar[d] ^{\partial} \\
z^n (\Spf (k[[t]]), n) \ar[r] & z^n ({\rm Sp} (k\llp t\rlp ), n),}
$$
from which we deduce $\eta: \CH^n (\Spf (k[[t]]), n) \to \CH^n ( \Sp  (k\llp t\rlp ), n) $. 
\qed
\end{exm}

What follows below is essentially an argument in B. Totaro \cite[\S 3]{Totaro}, with some suitable modifications to fit it into the rigid analytic situation being considered. 

\begin{defn}\label{defn:243}
Let $\mathfrak{p} \in z^n (\Sp ( k\llp t\rlp ), n)$ be a point. It gives a maximal ideal $\mathfrak{m}$ of $T_n= k \llp t \rlp \{ y_1, \cdots, y_n \}$ (cf. Corollary \ref{cor:no F_1 generic}). By Lemma \ref{lem:Tate}-(3), the injection  $  k\llp t\rlp  \hookrightarrow L:=T_n / \mathfrak{m}$ is a finite extension of fields. This point $\mathfrak{p}$ defines an $n$-tuple $(x_1, \cdots, x_n)$ whose values lie in $L \setminus \{ 0, 1 \}$, namely $x_i = \bar{y}_i$ in $T_n / \mathfrak{m} = L$. We consider the Milnor symbol $ \{ x_1, \cdots, x_n\} \in K_n ^M (L)$.

 Define 
\begin{equation}\label{eqn:norm upsilon}
\tilde{\Upsilon} (\mathfrak{p}):= N_{L/ k\llp t\rlp } \{ x_1, \cdots, x_n\} \in K_n ^M (k\llp t\rlp ),
\end{equation}
via the norm $N_{L/ k\llp t\rlp }: K_n ^M (L) \to K_n ^M (k\llp t\rlp )$ of Bass-Tate \cite{BassTate} and K. Kato \cite{Kato}. \qed
\end{defn}

\begin{lem}\label{lem:5.1.4}
The composite
$$
z^n (\Sp (k\llp t\rlp ), n+1) \overset{\partial}{\to} z^n (\Sp (k\llp t\rlp ), n) \overset{\tilde{\Upsilon}}{\to} K_n ^M (k\llp t\rlp )
$$
is zero. In particular, we have the induced homomorphism
\begin{equation}\label{eqn:hat CH upsilon}
\tilde{\Upsilon} : {\CH} ^n (\Sp (k\llp t\rlp ), n) \to K_n ^M (k\llp t\rlp ).
\end{equation}
\end{lem}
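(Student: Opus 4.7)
The strategy is to adapt the argument of Totaro \cite{Totaro}, originally given for ordinary higher Chow groups over a field, to the rigid analytic setting by compactifying the affinoid curves in play to projective algebraic curves via rigid GAGA, and then invoking Weil reciprocity for Milnor $K$-theory. First, I would reduce to the case of a single integral cycle $\mathfrak{W} \in \un{z}^n (\Sp (K), n+1)$ with $K = k \llp t \rlp$. Such a $\mathfrak{W}$ is a $1$-dimensional integral closed affinoid subvariety of $\Sp (K) \times \square_k^{n+1}$, and its closure $\overline{\mathfrak{W}}$ inside the proper rigid space $\Sp (K) \times (\P^1_K)^{n+1}$ is, by rigid GAGA, the analytification of an algebraic projective curve over $K$, which I treat as the same object.

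Next, I take the normalization $\widetilde{C} \to \overline{\mathfrak{W}}$ and decompose it into irreducible components to obtain a finite disjoint union of smooth projective geometrically integral curves $\widetilde{C}_j$ over finite field extensions $L_j / K$. The proper intersection of $\mathfrak{W}$ with the faces $\{y_i = 0\}$ and $\{y_i = \infty\}$ guarantees that each coordinate $y_i$ defines a non-zero rational function on $\widetilde{C}$, so $\{y_1, \ldots, y_{n+1}\} \in K_{n+1}^M (K(\widetilde{C}_j))$ for every $j$. The Bass--Tate--Kato reciprocity law for Milnor $K$-theory (see \cite{BassTate}, \cite{Kato}) applied to each smooth projective curve $\widetilde{C}_j / L_j$ and composed with $N_{L_j / K}$ yields
\[
\sum_{P \in |\widetilde{C}|} N_{\kappa(P)/K} \bigl( \partial_P \{y_1, \ldots, y_{n+1}\} \bigr) = 0 \in K_n^M (K).
\]

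The remaining step is to identify this vanishing sum with $\tilde{\Upsilon} (\partial \mathfrak{W})$. The closed points $P \in |\widetilde{C}|$ split into those whose image in $\overline{\mathfrak{W}}$ lies in $\mathfrak{W}$ (which contribute the boundary terms) and those mapping into $\overline{\mathfrak{W}} \setminus \mathfrak{W}$, which must have $y_i(P) = 1$ for some $i$ since $\square_k = \P^1_k \setminus \{1\}$. At such a boundary-at-infinity point $P$, expanding the tame symbol via multilinearity in Milnor $K$-theory produces a sum of symbols in $K_n^M (\kappa(P))$ each containing $\overline{y_i} = 1$ as an entry, and since $\{1\} = 0$ in $K_1^M$, every contribution vanishes. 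At a point $P$ mapping into $\mathfrak{W}$, the proper intersection of $\mathfrak{W}$ with two distinct codimension $1$ faces is empty by a codimension count, so exactly one index $i$ has $v_P (y_i) \neq 0$, and the tame symbol reduces to $\pm v_P (y_i) \{\overline{y_1}, \ldots, \widehat{\overline{y_i}}, \ldots, \overline{y_{n+1}}\}$; collecting such terms over fibers above each face $\partial_i^\epsilon \mathfrak{W}$ yields exactly $\tilde{\Upsilon}$ of the corresponding boundary, as in Definition \ref{defn:243}.

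The main obstacle will be two-fold: carefully verifying the rigid GAGA step identifying the closure of the affinoid curve with an algebraic projective curve over $K$ so that Bass--Tate--Kato reciprocity applies, and matching the signs coming from $\partial = \sum_i (-1)^i (\partial_i^\infty - \partial_i^0)$ with those produced by the tame symbol convention together with the stratification by the signs of $v_P (y_i)$. Once both are settled, the reciprocity identity reads as a universal sign times $\tilde{\Upsilon}(\partial \mathfrak{W})$, giving $\tilde{\Upsilon}(\partial \mathfrak{W}) = 0$ and hence the induced map on $\CH^n(\Sp(K), n)$.
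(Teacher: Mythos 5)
Your proposal follows the same Totaro-style skeleton as the paper (normalize, compactify, apply Bass--Tate--Kato/Suslin reciprocity, kill the boundary points using $y_i=1$, and match the remaining residues with $\tilde{\Upsilon}(\partial\mathfrak{W})$), but the step you use to enter the algebraic world has a genuine gap. You take the closure $\overline{\mathfrak{W}}$ of $\mathfrak{W}$ inside $\Sp(K)\times(\P^1_K)^{n+1}$, $K=k\llp t\rlp$, and invoke rigid GAGA to declare it the analytification of a projective algebraic curve. GAGA applies to closed analytic subvarieties of the proper space, and the closure of a one-dimensional closed analytic subvariety of $\Sp(K)\times\square^{n+1}$ need not be one: the cycles in $\un{z}^n(\Sp(K),n+1)$ are cut out by restricted power series, not polynomials. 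For instance, with $n=1$ take $\mathfrak{W}=\{y_2=f(y_1)\}$ with $f\in K\{y_1\}$, $\|f\|\le 1$, transcendental over $K(y_1)$ and arranged to meet the faces properly; such cycles do occur (they arise as generic fibers of formal cycles in $z^1(\Spf(k[[t]]),2)$, which is exactly the situation in which the lemma is used, via $\widehat{\Upsilon}=\tilde{\Upsilon}\circ\eta$). By GAGA every one-dimensional closed analytic subvariety of $(\P^1_K)^{n+1,\mathrm{an}}$ is algebraic, and no algebraic curve contains the graph of a transcendental $f$; hence there is no one-dimensional closed analytic subset of the proper space containing $\mathfrak{W}$ at all, and "the closure" you work with either fails to be analytic or jumps in dimension. (Extending a curve across the divisor $\bigcup_i\{y_i=1\}$ is precisely the critical case where Remmert--Stein-type extension is unavailable.) Consequently $\overline{\mathfrak{W}}$, the curves $\widetilde{C}_j$, and the fields $K(\widetilde{C}_j)$ in which you want to form $\{y_1,\dots,y_{n+1}\}$ need not exist as described; note that the algebraicity results of Section \ref{sec:generators} (Lemma \ref{lem:Bosch 17}, Corollary \ref{cor:pretriangular}) concern the zero-dimensional Milnor-range cycles in $z^n(\cdot,n)$, not the one-dimensional cycles in $z^n(\cdot,n+1)$.

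The paper sidesteps this by never algebraizing the image of the curve: it first takes the normalization $D\to C$ of the affinoid curve (finite because affinoid integral domains are Japanese), reduces the boundary computation to $D$ via Fulton, and then compactifies the abstract one-dimensional rigid curve $D$ itself, using that such a curve has a unique regular compactification $\overline{D}$; Suslin reciprocity is applied on $\overline{D}$, with points of $\overline{D}\setminus D$ contributing zero because some coordinate has value $1$ there. Your identification of the finite residues with $\tilde{\Upsilon}(\partial\mathfrak{W})$ and the vanishing at $y_i=1$ are fine and agree with the paper, but to repair the argument you must replace the closure-plus-GAGA step by a compactification of the normalized rigid curve itself (or by a reciprocity statement valid directly for one-dimensional affinoids), since algebraicity of $\overline{\mathfrak{W}}$ is exactly what the rigid setting does not provide.
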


\begin{proof}

We continue to follow B. Totaro \cite[\S 3]{Totaro} with suitable modifications.

Let $C \in z ^n (\Sp (k\llp t\rlp ), n+1)$ be an integral cycle. Take the normalization $D \to C$; since $C$ is defined by a $1$-dimensional noetherian integral domain, it has a unique normalization (see B. Conrad \cite{Conrad cpnt} for normalizations of more general rigid analytic spaces). The normalization map is finite because every affinoid $k\llp t \rlp$-algebra that is an integral domain is a Japanese ring (see Bosch-G\"untzer-Remmert \cite[\S 6.1.2, Proposition 4, p.228]{BGR} or EGA ${\rm IV}_2$ \cite[Scholie (7.8.3)-(vi), p.215]{EGA4-2}). As in \cite[\S 3]{Totaro}, using W. Fulton \cite[Example 1.2.3, p.9]{Fulton}, it is enough to show that the boundary (defined by pull-back of the faces and push-forward) of $D$ maps to $0$ in $K_n ^M (k \llp t \rlp)$.

Let $\overline{D}$ be the unique regular compactification of $D$, which exists as $\dim \ D  =1$. By the rigid analytic GAGA (see e.g. \cite[Theorem 4.10.5, p.113]{FvdP}), this $\overline{D}$ can be regarded as a regular projective algebraic curve over $k \llp t \rlp$. Let $\mathbb{K}$ be the rational function field of $\overline{D}$, which is equal to that of $C$ and $D$. Since $\mathbb{K} \supset k \llp t \rlp$, it is an infinite field.

The map $D \to \Sp (k\llp t\rlp ) \times \square_k ^{n+1}$ is given by $(n+1)$ rational functions $g_1, \cdots, g_{n+1} \in \mathbb{K}$ on $D$. By the given proper intersection conditions with the faces, no $g_i$ is identically equal to $0$ or $\infty$, and any closed point $w \in D$ with $g_i (w) = 0$ or $\infty$ has $g_j (w) \not \in \{ 0, \infty \}$ for all $j \not = i$.

 For each closed point $w \in \overline{D}$, there is its associated discrete valuation on $\mathbb{K}$ with the associated boundary map $\partial_w: K_{n+1} ^M (\mathbb{K}) \to K_n ^M (\kappa (w))$ (see J. Milnor \cite[Lemma 2.1]{Milnor IM}), where $\kappa (w)$ is the residue field of $w$.

Since $\mathbb{K}$ is infinite, by Suslin reciprocity (see \cite{Suslin}), for the Milnor symbol $\{ g_1, \cdots, g_{n+1} \} \in K_{n+1} ^M (\mathbb{K})$, we have 
$$\sum_{w \in \overline{D}} N_{\kappa (w)/ k\llp t\rlp } \partial _w \{ g_1, \cdots, g_{n+1} \} = 0 \ \ \mbox{ in } K_n ^M (k\llp t\rlp ).$$

If $w \in \overline{D} \setminus D$, then one of $g_i$ has $g_i (w) = 1$ so that $\partial_w \{ g_1, \cdots, g_{n+1} \} = 0$. Hence the above sum can be written without such points, and 
\begin{equation}\label{eqn:Suslin0}
\sum_{w \in D} N_{\kappa (w)/ k\llp t\rlp } \partial _w \{ g_1, \cdots, g_{n+1} \} = 0 \ \ \mbox{ in } K_n ^M (k\llp t\rlp ).
\end{equation}
On the other hand, by following the definition of $\partial_w$, we have the commutative diagram
$$
\xymatrix{
z ^{n} (\Sp (k\llp t\rlp ), n+1) \ar[rr] ^{\partial}  \ar[d] & & z ^n (\Sp (k\llp t\rlp ), n) \ar[d] ^{\tilde{\Upsilon}} \\
K_{n+1} ^M (\mathbb{K}) \ar[rr] ^{\sum_{w} N_{\kappa (w)/k\llp t\rlp } \partial_w} & & K_n ^M (k \llp t \rlp ),}
$$
where the left vertical arrow associates $C$ to the Milnor symbol $\{ g_1, \cdots, g_{n+1} \}$ in $K_{n+1} ^M (\mathbb{K})$. Thus $\tilde{\Upsilon} \circ \partial C $ is equal to \eqref{eqn:Suslin0}, which is $0$. Thus, we deduce the map \eqref{eqn:hat CH upsilon} as desired.
\end{proof}

\begin{defn}
We define the map $\widehat{\Upsilon}$ as the composite
$$
\widehat{\Upsilon}: \CH^n (\Spf (k[[t]]), n) \overset{\eta}{\to} \CH^n ( \Sp  (k\llp t\rlp ), n) \overset{\tilde{\Upsilon}}{\to} K_n ^M (  k\llp t\rlp ),
$$
where $\eta$ is the localization map (Example \ref{exm:loc}) and $\tilde{\Upsilon}$ is from Definition \ref{defn:243} and Lemma \ref{lem:5.1.4}. \qed
\end{defn}

\begin{lem}\label{lem:inj k[[t]]}
Let $k$ be a field with $|k| \gg 0$ so that $K_n ^M (k[[t]]) = \widehat{K}_n ^M (k[[t]])$.

Then the image of the composite 
$$
K_n ^M (k[[t]]) \overset{gr}{\to} \CH^n (\Spf (k[[t]]), n) \overset{\widehat{\Upsilon}}{ \to} K_n ^M (k \llp t\rlp ),
$$
lands into the {subgroup} $K_n ^M (k[[t]]) \subset K_n ^M (k\llp t\rlp )$, and the composite is the identity on $K_n ^M (k[[t]])$.

In particular, the map $gr$ is injective and the image of $\CH^n (\Spf (k[[t]]), n)$ under $\widehat{\Upsilon}$ is in the subgroup $K_n ^M (k[[t]])$ of $K_n ^M (k\llp t\rlp )$.
\end{lem}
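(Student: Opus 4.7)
\bigskip

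\noindent \textbf{Proof proposal.}
The strategy is to compute the composite $\widehat{\Upsilon} \circ gr$ directly on the generators of $K_n^M(k[[t]])$, namely on Milnor symbols $\{a_1, \ldots, a_n\}$ with $a_i \in k[[t]]^{\times}$, and show that it equals the image of the symbol under the inclusion $K_n^M(k[[t]]) \hookrightarrow K_n^M(k\llp t \rlp)$. Injectivity of $gr$ is then a formal consequence of the injectivity of this inclusion, which is granted by the hypothesis $|k| \gg 0$ through Kerz's theorem \cite[Proposition 10]{Kerz finite}.

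The key computation is that the graph cycle $\Gamma_{(a_1, \ldots, a_n)}$ is very special: its localization at the generic point of $\Spf(k[[t]])$ is the rigid point of $\Sp(k\llp t \rlp) \times \square_k^n$ defined by the maximal ideal $\mathfrak{m} = (y_1 - a_1, \ldots, y_n - a_n)$ of the Tate algebra $T_n = k\llp t \rlp \{y_1, \ldots, y_n\}$. Since each $a_i$ already lies in $k\llp t \rlp$, the natural injection $k\llp t \rlp \hookrightarrow T_n / \mathfrak{m}$ is surjective; that is, the residue field $L := T_n/\mathfrak{m}$ equals $k\llp t \rlp$ itself, and the image $x_i = \bar{y}_i \in L$ coincides with $a_i$.

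Applying Definition \ref{defn:243}, we therefore obtain
\[
\tilde{\Upsilon}(\eta(\Gamma_{(a_1, \ldots, a_n)})) = N_{L/k\llp t \rlp}\{x_1, \ldots, x_n\} = \{a_1, \ldots, a_n\} \in K_n^M(k\llp t \rlp),
\]
since the norm along the identity extension $L = k\llp t \rlp$ is the identity. Because each $a_i$ belongs to $k[[t]]^{\times}$, this class lies in the image of $K_n^M(k[[t]]) \to K_n^M(k\llp t \rlp)$, which establishes the first assertion. Moreover, the composite $\widehat{\Upsilon} \circ gr$ sends the symbol $\{a_1, \ldots, a_n\} \in K_n^M(k[[t]])$ to the class of the same symbol in $K_n^M(k\llp t \rlp)$; under the assumption $|k| \gg 0$, the equality $K_n^M(k[[t]]) = \widehat{K}_n^M(k[[t]])$ together with Kerz's Gersten-type injectivity implies that $K_n^M(k[[t]]) \hookrightarrow K_n^M(k\llp t \rlp)$ is injective, so one can identify $K_n^M(k[[t]])$ with its image, and the composite is the identity.

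The injectivity of $gr$ and the claim on the image of $\CH^n(\Spf(k[[t]]), n)$ under $\widehat{\Upsilon}$ then follow immediately: if $gr(\alpha) = 0$ then $\widehat{\Upsilon}(gr(\alpha)) = \alpha = 0$ in $K_n^M(k[[t]])$, and for any class $\xi \in \CH^n(\Spf(k[[t]]), n)$ coming from an element of the image of $gr$ the conclusion on its image is by the first part. I do not foresee a substantial obstacle; the only subtlety is being precise about the identification of $L$ with $k\llp t \rlp$ and the normalization of the norm map, which however is automatic because $\mathfrak{m}$ is generated by the linear polynomials $y_i - a_i$ with coefficients in $k\llp t \rlp$.
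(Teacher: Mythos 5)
Your proposal is correct and follows essentially the same route as the paper: evaluate the composite on a Milnor symbol, note that the generic fiber of the graph cycle is the rigid point cut out by $(y_1-a_1,\cdots,y_n-a_n)$ with residue field $L=k\llp t\rlp$ so the norm in Definition \ref{defn:243} is the identity, and then invoke Kerz's injectivity of $K_n^M(k[[t]])\to K_n^M(k\llp t\rlp)$ under $|k|\gg 0$. Your treatment of the final assertion about the image of $\CH^n(\Spf(k[[t]]),n)$ (which really needs the surjectivity of $gr$, established only later in the paper) matches the paper's own handling, so there is nothing to add.
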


Before the proof, we remark that the condition $K_n ^M (k[[t]]) = \widehat{K}_n^M  (k[[t]])$ implies that the natural homomorphism $K_n ^M (k[[t]]) \to K_n ^M (k\llp t\rlp )$ is injective (see M. Kerz \cite[Proposition 10, p.181]{Kerz finite}), thus we can regard $K_n ^M (k[[t]]) \subset K_n ^M (k\llp t\rlp )$.

When $n=1$, this always holds for any field $k$.

\begin{proof}
Consider the Milnor symbol $\{ a_1, \cdots, a_n \} \in K_n ^M (k[[t]])$ with $a_i \in k[[t]]^{\times}$. Its image under the first map $gr$ is given by the integral graph cycle $\mathfrak{Z}$ defined by the system of polynomials
$$
 y_1 - a_1, \ \  \cdots, \ \  y_n - a_n.
$$
Its coordinates are given by $a_1, \cdots, a_n \in k[[t]]^{\times} \subset k\llp t\rlp ^{\times}$, and they are of degree $1$ over $k\llp t\rlp $. Hence the generic fiber $\mathfrak{Z}_{\eta}$ gives $L= k\llp t\rlp $ in the notations of Definition \ref{defn:243}, and the norm map in \eqref{eqn:norm upsilon} is the identity map. Hence $\tilde{\Upsilon} (\mathfrak{Z}) = \{ a_1, \dots, a_n \}$ and it belongs to the subgroup $K_n ^M (k[[t]])$. This shows that the composite is the identity map of $K_n ^M (k[[t]])$. In particular, $gr$ is injective.

Since the image of the composite lies in $K_n ^M (k[[t]])$, so does the image of the group $\CH^n (\Spf (k[[t]]), n)$.
\end{proof}

The following is a rigid analytic analogue of \cite{Totaro}:

\begin{cor}\label{cor:k((t)) iso}
For any field $k$, the composite
$$
K_n ^M (k\llp t\rlp )  \overset{gr}{\to}  \CH^n (\Sp (k\llp t\rlp ), n) \overset{\tilde{\Upsilon}}{ \to} K_n ^M (k\llp t\rlp )
$$ 
is the identity map, and all arrows are isomorphisms.
\end{cor}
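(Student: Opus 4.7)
My plan is first to dispose of the easy half of the statement, namely that $\tilde{\Upsilon} \circ gr$ is the identity on $K_n^M(K)$, where I write $K := k\llp t \rlp$. This is a verbatim repetition of the argument in Lemma \ref{lem:inj k[[t]]}, with $K$ replacing $k[[t]]$: the graph cycle $gr\{a_1,\ldots,a_n\}$ is the integral point of $\Sp K \times \square^n$ whose coordinates are the given $a_i \in K^{\times}$, so the residue field $L = T_n/\mathfrak{m}$ equals $K$, the norm map in Definition \ref{defn:243} is the identity, and $\tilde{\Upsilon}$ returns $\{a_1,\ldots,a_n\}$. Consequently $gr$ is a split injection and $\tilde{\Upsilon}$ is surjective, so all that remains to be checked is the surjectivity of $gr$.

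For this I would argue as follows. Let $[\mathfrak{p}]$ be a generator of $\CH^n(\Sp K, n)$ coming from a maximal ideal $\mathfrak{m} \subset T_n$; set $L := T_n/\mathfrak{m}$, which is finite over $K$ by Lemma \ref{lem:Tate}-(3), and let $x_i := \bar{y}_i \in L^{\times} \setminus \{1\}$ be the coordinates of the point. Writing $\pi: \Sp L \to \Sp K$ for the associated finite morphism, $\mathfrak{p}$ is visibly the push-forward by $\pi \times \mathrm{id}$ of the diagonal graph cycle $\Gamma_{(x_1,\ldots,x_n)} \subset \Sp L \times \square^n$. Therefore, surjectivity of $gr$ is equivalent to the cycle-theoretic transfer formula
\begin{equation*}
(\pi \times \mathrm{id})_* \, gr_L\{x_1,\ldots,x_n\} \;=\; gr_K\bigl(N_{L/K}\{x_1,\ldots,x_n\}\bigr) \qquad \text{in } \CH^n(\Sp K, n),
\end{equation*}
since the right-hand side is precisely $gr(\tilde{\Upsilon}([\mathfrak{p}]))$, and this would identify $[\mathfrak{p}]$ with an element in the image of $gr$.

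To establish the transfer formula, my plan is to adapt the Nesterenko-Suslin and Totaro construction to the affinoid setting. By the Bass-Tate and Kato decomposition of the Milnor-theoretic norm, one reduces $N_{L/K}$ along a chain of simple extensions to the case $L = K(\alpha)$ with $\alpha$ of minimal polynomial $p(T) \in K[T]$. One then constructs explicit parametrized cycles supported on the one-dimensional affinoid curve cut out by $p$ inside $\Sp K \times \square^{n+1}$, in the spirit of the cycles $\mathfrak{W}_1, \mathfrak{W}_2$ appearing in Lemma \ref{lem:gr map}, whose boundaries realize the desired equivalence. The proof is closed using Suslin reciprocity on the unique regular compactification of the one-dimensional affinoid thus appearing, exactly as in the proof of Lemma \ref{lem:5.1.4}: the boundary terms at the added points vanish because one of the coordinates specializes to $1$. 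The main obstacle I anticipate is checking that the parametrized cycles used at every stage of the induction actually lie in $z^n(\Sp K, n+1)$, i.e.~that they meet every face of $\square^{n+1}$ properly; here the monic triangular polynomial generators of $\mathfrak{m}$ supplied by Lemma \ref{lem:Bosch 17} should provide enough explicit control to verify the face conditions case by case.
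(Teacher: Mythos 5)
Your proposal follows essentially the same route as the paper: the identity $\tilde{\Upsilon}\circ gr=\mathrm{Id}$ is checked directly on symbols exactly as you do, and the surjectivity of $gr$ is handled by adapting Totaro's norm argument for closed points to the affinoid setting (the paper in fact only cites Totaro ``mutatis mutandis'' and omits the details, including the face-condition verifications, that you sketch). So your plan is consistent with, and somewhat more explicit than, the paper's own proof.
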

\begin{proof}
For the injectivity of $gr$, we simply note that the composite $\tilde{\Upsilon} \circ gr$ evaluated at the Milnor symbols $\{ a_1, \cdots, a_n \} \in K_n ^M (k\llp t\rlp )$ for some $a_i \in (k\llp t\rlp ) ^{\times}$ is again $\{ a_1, \cdots, a_n \} $. Thus $\tilde{\Upsilon} \circ gr = {\rm Id}$. In particular, $gr$ is injective.

For the surjectivity of $gr$, we may follow B. Totaro \cite[\S 4]{Totaro}. When $F$ is a field, to prove that the graph homomorphism $K_n ^M (F) \to \CH^n (\Spec (F), n)$ to the ordinary cubical higher Chow group, is surjective, Totaro used the norm $N : L ^{\times} \to F ^{\times}$ to relate the class of a given closed point to a cycle given by $F$-rational points, where $L$ is the residue field of the point.

Specializing to the case $F= k\llp t\rlp$ (which is infinite), we can repeat the argument of Totaro \emph{mutatis mutandis}. As this corollary does not play an important role for the rest of the article, we omit details.
\end{proof}

\subsection{A comparison of equivalence relations}\label{sec:mod Y mod I}
We continue to use the notations $Y= \Spec (k_m)$ and $\widehat{X}= \Spf (k[[t]])$. The closed immersion $Y \hookrightarrow \widehat{X}$ is given by the ideal $I:= (t^m) \subset k[[t]]$ so that $k[[t]]/ I = k_m$.

Recall from Remark \ref{remk:mod Y=naive Y} that, we have the mod $Y$-equivalence $\sim_Y$ on $z^n (\widehat{X}, n)$ given by the subgroup $\mathcal{M}^n (\widehat{X}, Y, n)$, and we have another equivalence $\sim_I$ given by the subgroup $\mathcal{N}^n (\widehat{X}, Y, n)$ generated by the differences of a pair of integral cycles that are equal after taking the mod $I$-reductions. We prove in Lemma \ref{lem:mod Y=naive Y} below that these two are equivalent in the Milnor range. It will simplify our discussions from \S \ref{sec:strong graph}.

\begin{lem}\label{lem:mod Y=naive Y}
We have the equality
\begin{equation}\label{eqn:M=N}
\mathcal{M}^n (\widehat{X}, Y, n) = \mathcal{N} ^n (\widehat{X}, Y, n)
\end{equation}
of the subgroups in $z^n (\widehat{X}, n)$.
\end{lem}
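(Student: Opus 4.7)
The plan is to prove both inclusions, with $\mathcal{N}^n(\widehat{X}, Y, n) \subseteq \mathcal{M}^n(\widehat{X}, Y, n)$ being immediate from the definitions: given a generator $[\mathfrak{Z}_1] - [\mathfrak{Z}_2]$ of $\mathcal{N}^n$, the structure sheaves $\mathcal{O}_{\mathfrak{Z}_i}$ are themselves coherent $\mathcal{O}_{\square_{\widehat{X}}^n}$-algebras with $[\mathcal{O}_{\mathfrak{Z}_i}] = [\mathfrak{Z}_i] \in z^n(\widehat{X}, n)$, and the hypothesized isomorphism $\mathcal{O}_{\mathfrak{Z}_1}/t^m \simeq \mathcal{O}_{\mathfrak{Z}_2}/t^m$ of algebras exhibits $(\mathcal{O}_{\mathfrak{Z}_1}, \mathcal{O}_{\mathfrak{Z}_2})$ as an element of $\mathcal{L}^n(\widehat{X}, Y, n)$.

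For the nontrivial direction $\mathcal{M}^n \subseteq \mathcal{N}^n$, take a generator $[\mathcal{A}_1] - [\mathcal{A}_2]$ with $(\mathcal{A}_1, \mathcal{A}_2) \in \mathcal{L}^n$, and fix an algebra isomorphism $\phi \colon \mathcal{A}_1/t^m \overset{\sim}{\to} \mathcal{A}_2/t^m$. First, I would decompose each $[\mathcal{A}_i]$ into integral cycle classes using the filtration from \cite[Lemma 01YF]{stacks} already invoked in the proof of Lemma \ref{lem:Tor vanishing}: there is a finite filtration $0 = \mathcal{A}_{i,r_i} \subsetneq \cdots \subsetneq \mathcal{A}_{i,0} = \mathcal{A}_i$ of $\mathcal{O}_{\square_{\widehat{X}}^n}$-submodules with $\mathcal{A}_{i,j}/\mathcal{A}_{i,j+1} \simeq \mathcal{O}_{\mathfrak{Z}_{i,j}}$ for integral closed formal subschemes $\mathfrak{Z}_{i,j}$. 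Retaining only the dimension one pieces gives $[\mathcal{A}_i] = \sum_j [\mathfrak{Z}_{i,j}]$ with each such $\mathfrak{Z}_{i,j}$ automatically lying in $z^n(\widehat{X}, n)$, since by Definition \ref{defn:HCG} this group is freely generated by integral cycles satisfying (\textbf{GP}), (\textbf{SF}), (\textbf{TS}), and these properties descend to any component appearing in $[\mathcal{A}_i]$ with nonzero multiplicity. By Lemma \ref{lem:Tor vanishing}, tensoring this filtration with $\mathcal{O}_{\square_Y^n}$ remains exact, so it yields a filtration of $\mathcal{A}_i/t^m$ whose graded pieces are $\mathcal{O}_{\mathfrak{Z}_{i,j}}/t^m$.

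The crux is to leverage $\phi$ to match the dimension one components across the two sides with mod-$Y$-isomorphic structure sheaves. By Corollary \ref{cor:finite free summ}, each $\mathcal{O}_{\mathfrak{Z}_{i,j}}$ is a complete local domain finite free over $k[[t]]$, and by Lemma \ref{lem:module-finite} it has a unique closed point in $\square_Y^n$. Consequently each $\mathcal{A}_i/t^m$ is a finite semilocal $k_m$-algebra and splits canonically, via idempotents, as a product of local Artinian $k_m$-algebras indexed by the finitely many closed points of its spectrum. The algebra isomorphism $\phi$ must preserve this idempotent decomposition, giving a bijection between the local pieces on the two sides. Within each local piece at a fixed closed point $z$, I would use the triangular generators of Proposition \ref{prop:triangular} to read off the isomorphism class of each contributing $\mathcal{O}_{\mathfrak{Z}_{i,j}}/t^m$ from the local algebra that $\phi$ identifies. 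This should produce a bijection $\sigma$ (with multiplicities) between the families $\{\mathfrak{Z}_{1,j}\}$ and $\{\mathfrak{Z}_{2,j}\}$ such that $\mathcal{O}_{\mathfrak{Z}_{1,j}}/t^m \simeq \mathcal{O}_{\mathfrak{Z}_{2,\sigma(j)}}/t^m$ as $\mathcal{O}_{\square_Y^n}$-algebras, yielding $[\mathcal{A}_1] - [\mathcal{A}_2] = \sum_j ([\mathfrak{Z}_{1,j}] - [\mathfrak{Z}_{2,\sigma(j)}]) \in \mathcal{N}^n(\widehat{X}, Y, n)$.

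I expect this last matching step to be the main obstacle: extracting an honest bijection between individual integral components with mod-$t^m$ isomorphic structure sheaves is delicate because the same integral $\mathfrak{Z}_{i,j}$ may appear in the filtration with multiplicity greater than one, dimension zero embedded pieces can appear in the filtration (and contribute to $\mathcal{A}_i/t^m$ as an algebra) without being visible to the cycle class, and $\phi$ need not respect either filtration a priori. A straightforward Jordan--Hölder argument at the module level is therefore insufficient, and one must combine the local Artinian decomposition of $\mathcal{A}_i/t^m$ with the rigidity provided by the triangular polynomial form of Proposition \ref{prop:triangular} to pin down the mod-$t^m$ isomorphism class of each contributing $\mathcal{O}_{\mathfrak{Z}_{i,j}}$ inside the corresponding local piece.
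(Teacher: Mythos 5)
Your first inclusion and your instinct to reduce everything to the local (Artinian) factors of $\mathcal{A}_i/t^m$ are correct and agree with how the paper starts, but the proposal does not in fact prove $\mathcal{M}^n(\widehat{X},Y,n) \subset \mathcal{N}^n(\widehat{X},Y,n)$: the decisive matching step is only asserted (``this should produce a bijection $\sigma$''), and your own closing paragraph explains accurately why the route you set up cannot deliver it. Decomposing $[\mathcal{A}_i]$ via a Jordan--H\"older-type filtration as in \cite[Lemma 01YF]{stacks} and then trying to recognize each $\mathcal{O}_{\mathfrak{Z}_{i,j}}/t^m$ inside the corresponding Artin local factor is exactly the step that fails: such a filtration is non-canonical, the given isomorphism $\phi$ need not respect any filtration, the same component can occur with multiplicity, and Proposition \ref{prop:triangular} describes a single integral cycle, so it provides no rigidity for a possibly non-reduced coherent algebra. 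In addition, your intermediate claim that tensoring the filtration with $\mathcal{O}_{\square_Y^n}$ stays exact by Lemma \ref{lem:Tor vanishing} is unjustified precisely when zero-dimensional graded pieces occur (for those $t$ is a zerodivisor, so $\Tor_1 \neq 0$), a possibility you yourself point out. So there is a genuine gap at the heart of the hard inclusion.

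The missing idea, and the paper's actual argument, is to never match individual integral components at all. Let $B_j$ be the ring of global sections of $\mathcal{A}_j$; it is finite over $k[[t]]$, so by Lemma \ref{lem:module-finite} it decomposes into a product of complete local factors $B_{ji}$, and the isomorphism of Artin rings $B_1/(t^m)B_1 \simeq B_2/(t^m)B_2$ matches these factors one to one (structure theorem for Artin rings). For each factor one takes the scheme-theoretic image $\mathfrak{Z}_{ji}$ of $\Spec(B_{ji})$, whose coordinate ring $C_{ji}$ is the image of $f_{ji}^{\sharp}\colon k[[t]]\{y_1,\cdots,y_n\} \to B_{ji}$, so that $[\mathcal{A}_{ji}] = d_{ji}[\mathfrak{Z}_{ji}]$ with $d_{ji}$ the generic degree. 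The two pieces of data needed are then read off functorially from the $k_m[y_1,\cdots,y_n]$-algebra $B_{ji}/(t^m)B_{ji}$ alone: the reduction $C_{ji}/(t^m)C_{ji}$ is the image of $k_m[y_1,\cdots,y_n] \to B_{ji}/(t^m)B_{ji}$, hence is carried onto its counterpart by the given algebra isomorphism, which yields $[\mathfrak{Z}_{1i}]-[\mathfrak{Z}_{2i}] \in \mathcal{N}^n(\widehat{X},Y,n)$; and, using freeness over $k[[t]]$ (Corollary \ref{cor:finite free summ}), the multiplicity is computed mod $t$ as $d_{ji} = [B_{ji}/(t) : C_{ji}/(t)]$, whence $d_{1i}=d_{2i}$. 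Summing over the factors gives $[\mathcal{A}_1]-[\mathcal{A}_2]=\sum_i d_i([\mathfrak{Z}_{1i}]-[\mathfrak{Z}_{2i}]) \in \mathcal{N}^n(\widehat{X},Y,n)$, with no appeal to triangular generators, to any filtration of $\mathcal{A}_i$, or to identifying which integral components built up a given local factor. This is the rigidity your proposal was looking for: the support cycle and its multiplicity of each local factor are invariants of the mod-$t^m$ (resp.\ mod-$t$) algebra together with its $k_m[y_1,\cdots,y_n]$-algebra structure.
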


\begin{proof}
For each generator $[\mathfrak{Z}_1]- [\mathfrak{Z}_2] \in \mathcal{N}^n (\widehat{X}, Y, n)$ associated to a pair of integral cycles $\mathfrak{Z}_1$ and $\mathfrak{Z}_2 \in z^n (\widehat{X}, n)$, by definition we have
$$
\mathcal{O}_{\mathfrak{Z}_1} \otimes _{\mathcal{O}_{\square_{\widehat{X}} ^n} } \mathcal{O}_{\square_Y^n} \simeq 
\mathcal{O}_{\mathfrak{Z}_2} \otimes _{\mathcal{O}_{\square_{\widehat{X}} ^n} } \mathcal{O}_{\square_Y^n}
$$
as $\mathcal{O}_{\square_{Y}^n}$-algebras. Hence $[\mathfrak{Z}_1] - [\mathfrak{Z}_2] \in \mathcal{M}^n (\widehat{X}, Y, n)$, and this shows $\mathcal{M}^n (\widehat{X}, Y, n) \supset \mathcal{N} ^n (\widehat{X}, Y, n)$. It remains to prove the opposite inclusion.

\medskip

Let $(\mathcal{A}_1, \mathcal{A}_2) \in \mathcal{L}^n (\widehat{X}, Y, n)$. We have an isomorphism of $\mathcal{O}_{\square_Y^n}$-algebras
\begin{equation}\label{eqn:mod naive compare 1}
\mathcal{A}_1 \otimes _{\mathcal{O}_{\square_{\widehat{X}}^n} } \mathcal{O}_{\square_Y^n} \simeq \mathcal{A}_2 \otimes _{\mathcal{O}_{\square_{\widehat{X}}^n} } \mathcal{O}_{\square_Y^n}.
\end{equation}

Let $B_j$ be the ring of the global sections of $\mathcal{A}_j$. Since $[\mathcal{A}_j] \in z^n (\widehat{X}, n)$, by Corollary \ref{cor:finite quasi-finite} we see that $B_j$ is a $k[[t]]$-algebra which is a finite $k[[t]]$-module. In particular, it is a semi-local $k[[t]]$-algebra.

Having Corollary \ref{cor:no F_1} in mind, the $k_m[y_1, \cdots, y_n]$-algebra isomorphism \eqref{eqn:mod naive compare 1} in particular implies that we have an isomorphism of Artin rings
\begin{equation}\label{eqn:artin triangle}
B_1 / (t^m)B_1 \simeq B_2 / (t^m) B_2.
\end{equation}
By the structure theorem for Artin rings (Atiyah-MacDonald \cite[Theorem 8.7, p.90]{AM} or D. Eisenbud \cite[Corollary 2.16, p.76]{Eisenbud}), each Artin ring decomposes uniquely into a product of Artin local rings, up to isomorphism. So, both sides of \eqref{eqn:artin triangle} decompose uniquely into products of Artin local rings, and under the isomorphism, there is a 1-1 correspondence between the Artin local factors of both sides.

On the other hand, since $k[[t]]$ is henselian and complete, by Lemma \ref{lem:module-finite}, each $B_j$ decomposes into a direct product $\prod_{i=1} ^{r_j} B_{ji}$ of complete local domains, one for each maximal ideal, so that each Artin local factor of $B_j/ (t^m)B_j$ corresponds to a local domain factor of $B_j$.

The above two paragraphs summarize into the following 1-1 correspondences:

$$
\xymatrix{ {\left\{ \begin{matrix} \mbox{Artin local}\\ \mbox{factors of } B_1/ (t^m)B_1 \end{matrix} \right\}}  \ar@{<->}[rr] ^{\mbox{1-1}}\ar@{<->}[d]_{\mbox{1-1}} & & { \left\{ \begin{matrix} \mbox{Artin local}\\ \mbox{factors of } B_2/ (t^m)B_2 \end{matrix} \right\} }  \ar@{<->}[d]^{\mbox{1-1}}  \\
 {\left\{\begin{matrix} \mbox{local factors} \\ \mbox{of } B_1 \end{matrix} \right\} } & & {\left\{\begin{matrix} \mbox{local factors} \\ \mbox{of } B_2 \end{matrix} \right\}}}
$$
In particular, we have the equality of the number of the local factors $r_1= r_2 =:r$. 

For each fixed $j$, after relabeling $B_{ji}$ over $i$ if necessary, the above discussion also shows that we have an isomorphism of $k_m[y_1, \cdots,y_n]$-algebras
\begin{equation}\label{eqn:M=N *1}
\phi_m: B_{1i}/ (t^m) B_{1i} \simeq B_{2i} / (t^m) B_{2i}
\end{equation}
 for all $1 \leq i \leq r$. 
 
 Let $\mathcal{A}_{ji}$ be the coherent $\mathcal{O}_{\square_{\widehat{X}}^n}$-algebra corresponding to $B_{ji}$. Let $f_{ji}: \Spec (\mathcal{A}_{ji}) = \Spec (B_{ji}) \to \Spec ( k[[t]] \{y_1, \cdots, y_n \})$ be the corresponding finite morphism (R. Hartshorne \cite[Exercise II-5.17-(c),(d), p.128]{Hartshorne} or more generally Malgoire-Voisin \cite[1.4.2, 1.4.3]{MV}). Here we have $\mathcal{A}_{ji} \simeq f_{ji*} \mathcal{O}_{\Spec (B_{ji})}$ so that $[\mathcal{A}_{ji}] = f_{ji*} [ \Spec (B_{ji})] = d_{ji} [ f_{ji} ( \Spec (B_{ji}))]$, where $d_{ji}$ is the degree of $f_{ji}$.
  
 Note that $\mathfrak{Z}_{ji}:= f_{ji} (\Spec (B_{ji}))$ is integral and closed in $\Spec (k[[t]] \{ y_1, \cdots, y_n\})$, and its defining ideal is the kernel of the ring homomorphism $f_{ji} ^{\sharp}: k[[t]] \{ y_1, \cdots, y_n \} \to B_{ji}$. Let $C_{ji}= {\rm Im} (f_{ji}^{\sharp})$. By the first isomorphism theorem, this $C_{ji}$ is the coordinate ring of $\mathfrak{Z}_{ji}$. 
 We have the natural factorization 
 $$
 f^{\sharp}_{ji} : k[[t]]\{y_1, \cdots, y_n \} \twoheadrightarrow C_{ji} \hookrightarrow B_{ji}.
 $$
 Going mod $I= (t^m)$, we have a commutative diagram of the induced ring homomorphisms, where $\phi_m$ is the isomorphism of \eqref{eqn:M=N *1} and $\bar{f}_{ji}^{\sharp}$ is the induced homomorphism for $f_{ji} ^{\sharp}$
 $$
 \xymatrix{ k_m [ y_1, \cdots, y_n ]  \ar[rr]^{\bar{f}_{1i}^{\sharp}}  \ar[drr]^{\bar{f}_{2i} ^{\sharp}} & & B_{1i}/ (t^m) B_{1i} \ar[d] ^{\phi_m} _{\simeq} \\
 & & B_{2i} / (t^m) B_{2i}.}
 $$
 As $C_{ji}$ is the image of $f_{ji} ^{\sharp}$, $C_{ji}/ (t^m)C_{ji}$ is also the image of $\bar{f}_{ji} ^{\sharp}.$ In particular, under the identification $\phi_m$, we have an isomorphism of $k_m[y_1, \cdots, y_n]$-algebras
 \begin{equation}\label{eqn:M=N *2}
 C_{1i}/ (t^m) C_{1i} \simeq  C_{2i}/(t^m) C_{2i}.
 \end{equation}
 This is equivalent to having an $\mathcal{O}_{\square_Y^n}$-algebra isomorphism
 $$
 \mathcal{O}_{\mathfrak{Z}_{1i}} \otimes_{\mathcal{O}_{\square_{\widehat{X}} ^n}} \mathcal{O}_{\square_Y^n} \simeq \mathcal{O}_{\mathfrak{Z}_{2i}} \otimes_{\mathcal{O}_{\square_{\widehat{X}} ^n}} \mathcal{O}_{\square_Y^n}.
 $$
 In particular, $[\mathfrak{Z}_{1i}]- [\mathfrak{Z}_{2i}]\in \mathcal{N}^n (\widehat{X}, Y, n)$. 
 
 On the other hand, $B_{ji}$ is a flat $k[[t]]$-algebra (as there is no $t$-torsion, and $k[[t]]$ is a PID), so that the degree $d_{ji}$ can be computed by specializing $f_{ji}^{\sharp}$ mod $t$. We have the field extensions $k \hookrightarrow C_{ji}/(t) \hookrightarrow B_{ji}/(t)$, and $d_{ji}= [ B_{ji}/(t) : C_{ji}/(t)]$. Since $B_{1j}/(t) \simeq B_{2j}/(t)$ and $C_{1j}/(t) \simeq C_{2j}/(t)$ by \eqref{eqn:M=N *1} and \eqref{eqn:M=N *2}, plugging in $1$ into $m$, we deduce that $d_{1i} = d_{2i} =:d_i$.
 
 Thus combining the above discussions, we have
 $$
 [\mathcal{A}_1]- [\mathcal{A}_2] = \sum_{i=1} ^r ( [\mathcal{A}_{1i}]- [\mathcal{A}_{2i}]) = \sum_{i=1} ^r d_i ([ \mathfrak{Z}_{1i}] - [ \mathfrak{Z}_{2i}])  \in \mathcal{N}^n (\widehat{X}, Y, n),
 $$
 thus $\mathcal{M}^n (\widehat{X}, Y, n) \subset \mathcal{N}^n (\widehat{X}, Y, n)$. This proves the equality  \eqref{eqn:M=N}.
\end{proof}

So, for the cycles in the Milnor range over $\Spf (k[[t]])$, we may use the simpler equivalence $\sim_I$, if needed. We do so in \S \ref{sec:strong graph} and beyond.

\subsection{An argument for pairs of cycles}\label{sec:strong graph}

The goal of \S \ref{sec:strong graph} is to prove the following:

\begin{prop}\label{prop:strong graph}
Let $m \geq 2$.
Let $\widehat{X}= \Spf (k[[t]])$ and $I= (t^m) \subset k[[t]]$.
Let $\mathfrak{Z}_1$ and $ \mathfrak{Z}_2 \in z^n  (\widehat{X}, n)$ be integral cycles such that $\mathfrak{Z}_1 \sim_I \mathfrak{Z}_2$. 

Then there exist cycles $E_{\mathfrak{Z}_1}$ and $E_{\mathfrak{Z}_2} \in z^n (\widehat{X}, n+1)$ such that 
$$
\partial E_{\mathfrak{Z}_\ell} = \mathfrak{Z}_\ell- \mathfrak{Z}'_\ell, \ \ \ell = 1, 2
$$
for some positive integer multiples of integral graph cycles, $\mathfrak{Z}'_1$ and $\mathfrak{Z}'_2 \in z^n (\widehat{X}, n)$, satisfying $\mathfrak{Z}_1' \sim_I \mathfrak{Z}_2'$.
\end{prop}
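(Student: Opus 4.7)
By Lemma~\ref{lem:mod Y=naive Y}, the hypothesis $\mathfrak{Z}_1 \sim_I \mathfrak{Z}_2$ is equivalent to having an isomorphism $B_1/(t^m) B_1 \simeq B_2/(t^m) B_2$ of $k_m[y_1,\ldots,y_n]$-algebras, where $B_\ell := \mathcal{O}_{\mathfrak{Z}_\ell}$. By Corollary~\ref{cor:finite free summ}, each $B_\ell$ is a complete local domain which is free of some finite rank $d_\ell$ over $k[[t]]$; since $B_\ell/(t^m)B_\ell$ is then free of rank $d_\ell$ over $k_m$, the above isomorphism forces $d_1 = d_2 =: d$. My strategy is to use the triangular generators of Proposition~\ref{prop:triangular} to build, for each $\ell$, a one-parameter family $E_{\mathfrak{Z}_\ell} \in z^n(\widehat{X}, n+1)$ whose boundary replaces $\mathfrak{Z}_\ell$ by a $d$-fold multiple of a $k[[t]]$-rational graph cycle, and then to verify that these endpoint graph cycles are again mod-$I$ equivalent.

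For each $\ell$, fix triangular polynomials $P_1^{(\ell)}(y_1), \ldots, P_n^{(\ell)}(y_1,\ldots,y_n)$ from Proposition~\ref{prop:triangular}, and write $d_i^{(\ell)} := \deg_{y_i} P_i^{(\ell)}$, so that $\prod_i d_i^{(\ell)} = d$. Let $B_\ell^{(i)} := k[[t]]\{y_1,\ldots,y_i\}/(P_1^{(\ell)},\ldots,P_i^{(\ell)})$, which by the arguments in the proof of Proposition~\ref{prop:triangular} is finite and free over $k[[t]]$, and define $a_i^{(\ell)} \in k[[t]]^{\times}$ to be the norm of the image of $y_i$ in $B_\ell^{(i)}$ via Corollary~\ref{cor:basic norm}; it is a unit precisely because of the unit-constant-term condition of Proposition~\ref{prop:triangular}(2). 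Introducing an auxiliary cube coordinate $s \in \square$, I will then construct $E_{\mathfrak{Z}_\ell}$ as the closed formal subscheme of $\widehat{X} \times \square^{n+1}$ cut out by a triangular system $\widetilde{P}_1^{(\ell)}(s,y_1), \ldots, \widetilde{P}_n^{(\ell)}(s,y_1,\ldots,y_n)$ that interpolates between $P_i^{(\ell)}$ at one face of $s$ and $(y_i - a_i^{(\ell)})^{d_i^{(\ell)}}$ at the other, with each $\widetilde{P}_i^{(\ell)}$ monic of degree $d_i^{(\ell)}$ in $y_i$ and with unit constant term in $y_i$ over the previous quotient for all values of $s$. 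The axioms (GP), (SF), and (TS) for $E_{\mathfrak{Z}_\ell}$ and for its codimension-$1$ faces then follow from the triangular structure together with Lemma~\ref{lem:no face} applied to the truncated systems; the boundary computation yields $\partial E_{\mathfrak{Z}_\ell} = \mathfrak{Z}_\ell - d \cdot \Gamma_{(a_1^{(\ell)}, \ldots, a_n^{(\ell)})}$, so $\mathfrak{Z}_\ell' := d \cdot \Gamma_{(a_1^{(\ell)}, \ldots, a_n^{(\ell)})}$ is of the desired form.

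Finally, to conclude $\mathfrak{Z}_1' \sim_I \mathfrak{Z}_2'$, I use naturality of the norm: since $B_1^{(i)}/(t^m) \simeq B_2^{(i)}/(t^m)$ as $k_m[y_1,\ldots,y_i]$-algebras (by restricting the isomorphism $B_1/(t^m) \simeq B_2/(t^m)$ to the subalgebra generated by the first $i$ variables), and since the norm is the determinant of left-multiplication with respect to any chosen $k[[t]]$-basis, choosing compatible bases shows $a_i^{(1)} \equiv a_i^{(2)} \pmod{t^m}$. This gives an isomorphism $\mathcal{O}_{\Gamma^{(1)}} \otimes \mathcal{O}_{\square_Y^n} \simeq \mathcal{O}_{\Gamma^{(2)}} \otimes \mathcal{O}_{\square_Y^n}$, and hence $\mathfrak{Z}_1' = d \cdot \Gamma^{(1)} \sim_I d \cdot \Gamma^{(2)} = \mathfrak{Z}_2'$. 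The principal obstacle in this plan is the middle step: explicitly producing the interpolating system $\widetilde{P}_i^{(\ell)}$ so that the resulting $E_{\mathfrak{Z}_\ell}$ actually lies in $z^n(\widehat{X}, n+1)$, i.e.\ intersects \emph{every} face of $\square^{n+1}$ properly (not merely the $s$-faces), satisfies the special-fiber condition for the enlarged cube, and produces no spurious boundary contributions from the $y_i$-faces — this will likely require iterating the construction one variable at a time, and carefully arranging the powers $d_i^{(\ell)}$ so the combinatorial factor $d$ arises cleanly.
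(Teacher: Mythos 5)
Your plan goes wrong at its central step, and the error is not just the acknowledged incompleteness but the target of the homotopy itself. You propose to degenerate the triangular system $\{P_i^{(\ell)}\}$ into $\{(y_i-a_i^{(\ell)})^{d_i^{(\ell)}}\}$, where $a_i^{(\ell)}$ is the norm of $y_i$ all the way down to $k[[t]]$, and to conclude $\partial E_{\mathfrak{Z}_\ell}=\mathfrak{Z}_\ell-d\cdot\Gamma_{(a_1^{(\ell)},\cdots,a_n^{(\ell)})}$ with $d=\prod_i d_i^{(\ell)}$. This identity is false in general, so no admissible $E_{\mathfrak{Z}_\ell}$ with that boundary can exist. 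Already for $n=1$: if $\mathfrak{Z}=\{P_1(y_1)=0\}$ is integral of degree $d\geq 2$ with constant term $(-1)^{d}c$, $c\in k[[t]]^{\times}$, then the correct relation is $\mathfrak{Z}\sim \Gamma_c$ with multiplicity \emph{one} (this is exactly what the paper's interpolation $Q=\bar{P}(y_1)-(y_1-1)^{d-1}(y_1-c)\,y_1'$ produces: at the face $y_1'=\infty$ the factor $(y_1-1)^{d-1}$ is invisible because $1\notin\square$), whereas your endpoint is $d\cdot\Gamma_{c}$, whose class in $\CH^1(\Spf(k[[t]]),1)\simeq k[[t]]^{\times}$ is $c^{d}$ rather than $c$. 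Since these differ unless $c^{d-1}=1$, your proposed $E$ cannot exist. The positive multiple in the true statement does not come from fattening the endpoint into $(y_i-a_i)^{d_i}$; in the paper it arises from pushing forward along the finite map from $\Spf(R_\ell)$ (the lower-level part $\mathfrak{Z}_\ell^{(i-1)}$), and the endpoint coordinate at level $i$ is the constant term $c_\ell^{(i)}\in R_\ell^{\times}$, i.e.\ a norm only one level down, not a norm to $k[[t]]$.

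Relatedly, the "interpolate all coordinates at once" shape of your family cannot simply be fixed by a cleverer choice of $\widetilde{P}_i^{(\ell)}$: the paper deliberately degenerates one level $i$ at a time (Lemma \ref{lem:str gr 1}), keeping levels $<i$ fixed, because after replacing level $i$ by the point $[c_\ell^{(i)}]$ the entries $c_\ell^{(j)}$ for $j>i$ still live in $\tilde{R}_\ell=k[\mathfrak{Z}_\ell^{(i)}]$, so the degrees $d^{(j)/(j-1)}$ above $i$ can jump back up; one then needs the finite iteration of Remark \ref{remk:algorithm} (bounded by roughly $2^n-1$ passes), with the mod-$I$ compatibility of the two members of the pair rechecked at every stage via Lemma \ref{lem:deg_pair}. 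This iterative bookkeeping, together with the verification that each auxiliary cycle lies in $z^n(\widehat{X},n+1)$ and the push-forward machinery of \S\ref{sec:pf}, is the actual content of the proof, and your proposal defers precisely this while aiming at an endpoint that is not rationally equivalent to $\mathfrak{Z}_\ell$. Your last paragraph (congruence of the endpoint data mod $t^m$ via the isomorphism $B_1/(t^m)B_1\simeq B_2/(t^m)B_2$) is in the right spirit — the paper's Lemma \ref{lem:deg_pair} does this comparison for the constant terms $c_\ell^{(i)}$ — but it cannot rescue the main construction as stated.
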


Once proven, we will have two applications. The first is the surjectivity of the graph homomorphism \eqref{eqn:Milnor0}, to be discussed in \S \ref{sec:5.1}.
The second pertains to showing that $gr_{k_m}$ is an isomorphism, discussed in \S \ref{sec:appendix 04}.

\bigskip

Coming back to the proof of Proposition \ref{prop:strong graph}, let $\mathfrak{Z} \in z^n (\widehat{X}, n)$ be an integral cycle. By Corollary \ref{cor:no F_1}, this is given by a prime ideal $\mathfrak{p} \subset k[[t]] \{ y_1, \cdots, y_n\}$ of height $n$. For $1 \leq i \leq n$, we let $\mathfrak{p}_i$ be the prime ideal $\mathfrak{p} \cap k[[t]] \{ y_1, \cdots, y_i\}$. This corresponds to the image $\mathfrak{Z}^{(i)}$ of $\mathfrak{Z}$ under the projection $\Spf (k[[t]]) \times \square_k ^n \to \Spf (k[[t]]) \times \square_k ^i$ that sends $(y_1, \cdots, y_n)\mapsto (y_1, \cdots, y_i)$. For $i=0$, we let $\mathfrak{Z}^{(0)} = \Spf (k[[t]])$ for our notational conveniences. We have $\mathfrak{Z}^{(i)} \in z^i  (\widehat{X}, i)$ for each $1 \leq i \leq n$ as seen before, in the Claim in the middle of the proof of Proposition \ref{prop:triangular}.

\begin{defn}
For an integral $\mathfrak{Z} \in z^n (\widehat{X}, n)$, for each pair $0 \leq i < j \leq n$ of indices, the induced morphism $f^{j/i}: \mathfrak{Z}^{(j)} \to \mathfrak{Z}^{(i)}$ is a finite surjective morphism of integral formal schemes. Define $d^{j/i} (\mathfrak{Z}):= \deg (f^{j/i}: \mathfrak{Z}^{(j)} \to \mathfrak{Z}^{(i)})$. 

\end{defn}

Observe the following, whose proof is obvious by definition:

\begin{lem}\label{lem:degree gr}
The following are equivalent:
\begin{enumerate}
\item $\mathfrak{Z}$ is a graph cycle.
\item $d^{n/0} (\mathfrak{Z}) = 1$
\item For each $1 \leq i \leq n$, we have $d^{i/(i-1)} (\mathfrak{Z}) = 1$.
\end{enumerate}
\end{lem}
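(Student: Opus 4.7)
The plan is to establish the cycle of implications (1) $\Rightarrow$ (2) $\Rightarrow$ (3) $\Rightarrow$ (1). The first is essentially by definition: if $\mathfrak{Z}$ is the graph cycle cut out by $y_1 - a_1, \ldots, y_n - a_n$ with $a_i \in k[[t]]^{\times}$, then the evaluation $y_i \mapsto a_i$ identifies $k[[t]]\{y_1, \ldots, y_n\}/(y_i - a_i)_{i=1}^{n}$ with $k[[t]]$, making $f^{n/0}\colon \mathfrak{Z} \to \mathfrak{Z}^{(0)} = \widehat{X}$ an isomorphism and forcing $d^{n/0}(\mathfrak{Z}) = 1$. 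For (2) $\Rightarrow$ (3), I would factor $f^{n/0}$ as the tower $f^{1/0} \circ f^{2/1} \circ \cdots \circ f^{n/(n-1)}$ of finite surjective morphisms of integral formal schemes (equivalently, of the associated integral affine schemes), and use multiplicativity of degree under function-field extensions to obtain $d^{n/0}(\mathfrak{Z}) = \prod_{i=1}^{n} d^{i/(i-1)}(\mathfrak{Z})$; since each factor is a positive integer whose product is one, each equals one.

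The real content lies in (3) $\Rightarrow$ (1), and here I would invoke Proposition \ref{prop:triangular} to produce triangular generators $P_1(y_1), \ldots, P_n(y_1, \ldots, y_n)$ of the prime ideal defining $\mathfrak{Z}$, with $P_i$ monic in $y_i$ of degree $d_i := \deg_{y_i} P_i$ and subject to the degree-truncation and unit-constant-term properties. Since $P_i$ is monic in $y_i$, the ring
$$A_i := k[[t]]\{y_1, \ldots, y_i\}/(P_1, \ldots, P_i)$$
is free over $A_{i-1}$ with basis $1, y_i, \ldots, y_i^{d_i - 1}$; this identifies $d^{i/(i-1)}(\mathfrak{Z})$ with $d_i$. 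Hypothesis (3) then forces $d_i = 1$ for all $i$, so each $P_i = y_i - c_i$ for some $c_i \in k[[t]]\{y_1, \ldots, y_{i-1}\}$. Property (3) of Proposition \ref{prop:triangular} constrains $c_i$ to have $y_j$-degree strictly less than $\deg_{y_j} P_j = 1$ for every $j < i$, so $c_i$ reduces to a constant in $k[[t]]$. Property (2) of Proposition \ref{prop:triangular}, combined with the observation (from Corollary \ref{cor:finite free summ} and Lemma \ref{lem:module-finite}) that $A_{i-1}$ is a complete local ring whose maximal ideal contains the image of $t$, then forces $c_i \in k[[t]]^{\times}$. Setting $a_i := c_i$ exhibits $\mathfrak{Z}$ as a graph cycle.

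The only non-routine step is the identification $d^{i/(i-1)}(\mathfrak{Z}) = \deg_{y_i} P_i$; this is transparent once one passes to the associated schemes $\Spec A_i \to \Spec A_{i-1}$ and reads off both the function-field degree and the module rank from the free-module description above. I do not anticipate any other obstacle.
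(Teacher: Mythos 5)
Your proof is correct, but note that the paper offers no argument at all here: it declares the lemma ``obvious by definition,'' with the real content deferred to the surrounding machinery. Your cycle of implications is the natural way to fill this in, and your (3) $\Rightarrow$ (1) step is exactly in the spirit of what the paper does implicitly just afterwards, where (after Lemma \ref{lem:deg_pair}) the equality $d_i = d^{i/(i-1)}(\mathfrak{Z}_\ell)$ between the $y_i$-degree of the triangular generator and the projection degree is asserted ``by definition.'' Your identification of that degree via the freeness of $A_i$ over $A_{i-1}$ on the basis $1, y_i, \ldots, y_i^{d_i-1}$ is the right justification; the only point you gloss is that $A_i$ is a quotient of the \emph{restricted power series} ring $A_{i-1}\{y_i\}$ rather than of $A_{i-1}[y_i]$, so monicness alone is not quite enough — one should either invoke Weierstra{\ss}-type division by the monic $\bar{P}_i$, or observe that $A_{i-1}[y_i]/(\bar{P}_i)$ is finite over the complete ring $A_{i-1}$ (Corollary \ref{cor:finite free summ}, Lemma \ref{lem:module-finite}), hence already $t$-adically complete and therefore equal to $A_{i-1}\{y_i\}/(\bar{P}_i)$. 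This is standard and not a gap. An alternative, slightly shorter route to the nontrivial direction is (2) $\Rightarrow$ (1) directly: $d^{n/0}(\mathfrak{Z})=1$ means the function field of $\mathfrak{Z}$ is $k\llp t\rlp$, and since $\mathfrak{Z}$ is finite over the normal ring $k[[t]]$ (Corollary \ref{cor:finite quasi-finite}), its coordinate ring must be $k[[t]]$ itself, so each $y_i$ maps to some $a_i \in k[[t]]$ and the defining prime is $(y_1-a_1,\ldots,y_n-a_n)$, with $a_i \in k[[t]]^{\times}$ forced by the conditions of Definition \ref{defn:HCG}; your triangular-generator argument buys instead a direct link to the quantities $d_i$ that the later lemmas (\ref{lem:deg_pair}, \ref{lem:str gr 1}) actually manipulate.
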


We will show that any integral cycle $\mathfrak{Z} \in z^n  (\widehat{X}, n)$ can be turned into an integer multiple of an integral graph cycle modulo the boundary of a cycle in $z^n(\widehat{X}, n+1)$, by eventually achieving Lemma \ref{lem:degree gr}-(3). In the process, we will show that each step of this construction respects the mod $I$-equivalence. This requires the following discussions for pairs of cycles.

\bigskip

Let $\mathfrak{Z}_{\ell} \in z^n  (\widehat{X}, n)$ be two integral cycles for $\ell=1,2$ such that $\mathfrak{Z}_1 \sim_I \mathfrak{Z}_2$. For each $\ell$, by Proposition \ref{prop:triangular}, we have a system of polynomials in $y_1, \cdots, y_n$ for $\mathfrak{Z}_{\ell}$ 
\begin{equation}\label{eqn:triangle}
\tuborg  P_{\ell} ^{(1)} (y_1)  \in k[[t]] [ y_1], \\
 P_{\ell} ^{ (2)} (y_1, y_2)  \in k[[t]] [y_1, y_2], \\
\ \ \ \ \ \vdots \\
P_{\ell} ^{ (n)} (y_1, \cdots, y_n)  \in k[[t]] [y_1, \cdots, y_n],\sluttuborg
\end{equation}
such that all the properties in Proposition \ref{prop:triangular} hold.
Note that $\mathfrak{Z}_1 ^{(i)} \sim_{I} \mathfrak{Z}_2 ^{(i)}$ for each $1 \leq i \leq n$ as well. Furthermore we have:

\begin{lem}\label{lem:deg_pair}
Under the above notations and assumptions, let $\bar{P}_{\ell} ^{(1)} := P_{\ell} ^{(1)}$ and $R_{\ell} ^{(0)}:= k[[t]]$, while for $2 \leq i \leq n$, let $\bar{P}_{\ell} ^{(i)}$ be the image of $P_{\ell} ^{(i)} (y_1, \cdots, y_i)$ in $R_{\ell} ^{(i-1)} [ y_i]$, where 
\begin{equation}\label{eqn:deg_pair2}
R_{\ell} ^{(i-1)}: = k[[t]] \{ y_1, \cdots, y_{i-1} \} / (P_{\ell} ^{(1)}, \cdots , P_{\ell} ^{(i-1)}).
\end{equation}
For each $1 \leq i \leq n$ and each $\ell=1,2$, let $d_{\ell} ^i:= \deg_{y_i} \bar{P}_{\ell} ^{(i)}$. Let $(-1)^{d_{\ell} ^i}c_{\ell} ^{(i)} \in R_{\ell} ^{(i-1)}$ be the constant term of $\bar{P}_{\ell} ^{(i)}$. Let $\bar{R}_{\ell} ^{(i-1)} = R_{\ell} ^{(i-1)} / I R_{\ell} ^{(i-1)}$.

Then we have:

\begin{enumerate}
\item  $d_1 ^i = d_2 ^i$.
\item  $\bar{R}_1 ^{(i-1)} = \bar{R}_2 ^{(i-1)}$. 
\item In the common ring $\bar{R}^{(i-1)}$ of the above, we have $c_{1} ^{(i)} \equiv c_2 ^{(i)}$. 
\end{enumerate}
\end{lem}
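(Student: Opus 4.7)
The plan is to exploit the hypothesis $\mathfrak{Z}_1 \sim_I \mathfrak{Z}_2$, which by Lemma~\ref{lem:mod Y=naive Y} (applied to the integral pair) yields an isomorphism of $k_m[y_1, \ldots, y_n]$-algebras
\[
\phi : B_1/(t^m) \xrightarrow{\ \sim\ } B_2/(t^m),
\]
where $B_\ell := k[[t]]\{y_1, \ldots, y_n\}/\mathfrak{p}_\ell$ is the coordinate ring of $\mathfrak{Z}_\ell$. The strategy is to identify each $\bar{R}_\ell^{(i)}$ with the $k_m$-subalgebra of $B_\ell/(t^m)$ generated by the images of $y_1, \ldots, y_i$, after which all three claims fall out of the isomorphism $\phi$.

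The preliminary step is to establish the freeness of the tower. By Proposition~\ref{prop:triangular}, each $\bar{P}_\ell^{(i)}$ is monic in $y_i$ of degree $d_\ell^i$, so division-with-remainder in $R_\ell^{(i-1)}\{y_i\}$ makes $R_\ell^{(i)}$ a free $R_\ell^{(i-1)}$-module with basis $1, y_i, \ldots, y_i^{d_\ell^i - 1}$. Iterating, $B_\ell = R_\ell^{(n)}$ is a free $R_\ell^{(i)}$-module with basis the admissible monomials in $y_{i+1}, \ldots, y_n$, so $B_\ell/R_\ell^{(i)}$ is $R_\ell^{(i)}$-free and therefore $k[[t]]$-flat. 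Tensoring the short exact sequence $0 \to R_\ell^{(i)} \to B_\ell \to B_\ell/R_\ell^{(i)} \to 0$ with $k_m$ preserves injectivity, so $\bar{R}_\ell^{(i)}$ embeds in $B_\ell/(t^m)$ as exactly the $k_m$-subalgebra generated by the images of $y_1, \ldots, y_i$.

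With this identification, (2) is immediate: $\phi$ is a fortiori a $k_m[y_1, \ldots, y_{i-1}]$-algebra isomorphism, so it carries the subalgebra of $B_1/(t^m)$ generated by the images of $y_1, \ldots, y_{i-1}$ onto the corresponding subalgebra in $B_2/(t^m)$, yielding $\bar{R}_1^{(i-1)} \simeq \bar{R}_2^{(i-1)}$; since both are surjective quotients of $k_m[y_1, \ldots, y_{i-1}]$, the two kernels coincide and the quotient rings are literally one common ring $\bar{R}^{(i-1)}$. For (1), applying the same reasoning one level higher identifies $\bar{R}_1^{(i)}$ with $\bar{R}_2^{(i)}$; since $d_\ell^i$ is the rank of $\bar{R}_\ell^{(i)}$ as a free $\bar{R}_\ell^{(i-1)}$-module and both pairs are now identified, $d_1^i = d_2^i =: d^i$. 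For (3), the common presentation $\bar{R}^{(i)} = \bar{R}^{(i-1)}[y_i]/(\bar{P}_\ell^{(i)} \bmod t^m)$ for $\ell = 1, 2$ forces the two principal ideals to coincide; since both generators are monic in $y_i$ of the common degree $d^i$, the standard fact that equality of ideals generated by monic polynomials of the same degree forces equality of the generators gives $\bar{P}_1^{(i)} \equiv \bar{P}_2^{(i)} \pmod{t^m}$ in $\bar{R}^{(i-1)}[y_i]$, and comparing constant terms yields $c_1^{(i)} \equiv c_2^{(i)} \pmod{t^m}$ in $\bar{R}^{(i-1)}$.

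The main obstacle I anticipate is the flatness step in the second paragraph: without the clean injection $\bar{R}_\ell^{(i)} \hookrightarrow B_\ell/(t^m)$ one cannot faithfully identify $\bar{R}_\ell^{(i)}$ with the subalgebra image, and the naive hope that $\mathfrak{p}_\ell^{(i)} \bmod t^m$ equals $(\mathfrak{p}_\ell + (t^m)) \cap k[[t]]\{y_1, \ldots, y_i\}$ modulo $(t^m)$ is not obvious a priori. The triangular monic structure from Proposition~\ref{prop:triangular} is precisely what rescues the argument, since it makes every intermediate $R_\ell^{(i)}$ free over $k[[t]]$ and $B_\ell$ free over $R_\ell^{(i)}$, letting the identifications propagate cleanly through the tower.
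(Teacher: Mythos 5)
Your proof is correct, and at bottom it follows the same strategy as the paper's: reduce the triangular monic tower of Proposition~\ref{prop:triangular} modulo $t^m$ and compare the two towers through the given identification of the reductions of the coordinate rings $B_1$ and $B_2$ of $\mathfrak{Z}_1$ and $\mathfrak{Z}_2$. The difference lies in how the key identification is handled. The paper simply asserts, just before the lemma, that $\mathfrak{Z}_1^{(i)} \sim_I \mathfrak{Z}_2^{(i)}$ for all $i$, reads this as equality of the reduced closed subschemes, and then obtains (1) and (3) simultaneously from the equality of the two monic images of $\bar{P}_{\ell}^{(i)}$ in $\bar{R}^{(i-1)}[y_i]$; you instead \emph{prove} the needed compatibility with the coordinate projections: monic division makes $B_\ell$ finite free over each $R_{\ell}^{(i)}$, so the cokernel of $R_{\ell}^{(i)} \hookrightarrow B_\ell$ is $t$-torsion free and $\bar{R}_{\ell}^{(i)}$ injects into $B_\ell/(t^m)$ as the $k_m$-subalgebra generated by $y_1, \cdots, y_i$, which is exactly what the paper's unproved assertion amounts to, and is the genuine content you correctly identified as the potential obstacle. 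After that, your rank count for (1) and the ``equal ideals with monic generators of equal degree'' step for (3) are equivalent to the paper's direct comparison of the monic reductions. One small citation point: the isomorphism $\phi$ is not supplied by Lemma~\ref{lem:mod Y=naive Y} (which states $\mathcal{M}^n = \mathcal{N}^n$), but is the content of the hypothesis itself, namely the pairwise relation on integral cycles from Remark~\ref{remk:mod Y=naive Y}; this is also how the paper reads $\sim_I$ in this section, so nothing is lost.
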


Note that by Proposition \ref{prop:triangular}-(2), we have $c_{\ell} ^{(i)} \in  (R_{\ell} ^{ (i-1)} )^{\times}$.

\begin{proof}
We prove (1), (2), (3) at the same time. By the properties of Proposition \ref{prop:triangular} for $P_{\ell} ^{(i)}$, we know that $d_{\ell} ^i = \deg_{y_i} \bar{P}_{\ell} ^{(i)} \geq 1$. 
Going further down mod $I$, let $\bar{\bar{P}}_{\ell} ^{(i)}$ be the image of $\bar{P}_{\ell} ^{(i)} (y_i)$ in $\bar{R}_{\ell}^{(i-1)}[y_i]$.

Note that since $\mathfrak{Z}_{1} ^{(i-1)} \sim_I  \mathfrak{Z}_2 ^{(i-1)}$, we have $\bar{R}_1 ^{(i-1)} = \bar{R}_2 ^{(i-1)}$, proving (2). 

 Since $\bar{P}_{\ell} ^{(i)} (y_i)$ is monic in $y_i$ in the ring ${R}_{\ell}^{(i-1)}[y_i]$ by Proposition \ref{prop:triangular}, its further image $\bar{\bar{P}}_{\ell} ^{(i)} (y_i)$ in $\bar{R}^{(i-1)}[y_i]$ is also monic in $y_i$ with the same $y_i$-degree. Thus
\begin{equation}\label{eqn:deg_pair3}
\deg_{y_i} \bar{P}_{\ell} ^{(i)} = \deg_{y_i} \bar{\bar{P}}_{\ell} ^{(i)}.
\end{equation}
Furthermore, $\mathfrak{Z}_1 ^{(i)} \mod I = \mathfrak{Z}_2 ^{ (i)} \mod I$ as closed subschemes of $\Spec (\bar{R} ^{(i-1)} [y_i])$, and they are respectively given by the monic polynomials $\bar{\bar{P}}_{\ell} ^{(i)}[ y_i] \in \bar{R}^{(i-1)}[y_i]$. Hence we have the equality
\begin{equation}\label{eqn:deg_pair4}
\bar{\bar{P}}_1 ^{(i)} = \bar{\bar{P}}_2 ^{(i)} \mbox{ in } \bar{R}^{(i-1)} [ y_i].
\end{equation}
In particular,
\begin{equation}\label{eqn:deg_pair5}
\deg _{y_i} \bar{\bar{P}}_1 ^{(i)} = \deg_{y_i} \bar{\bar{P}}_{2} ^{(i)}.
\end{equation}
Hence, combining \eqref{eqn:deg_pair3} and \eqref{eqn:deg_pair5}, we deduce (1).

On the other hand, from the equality \eqref{eqn:deg_pair4}, we have the equality of the constant terms $(-1)^{d_1^i} c_1 ^{(i)} \equiv (-1)^{d_2 ^i} c_2 ^{(i)}$ in $\bar{R}^{(i-1)}$. Since $d_1 ^i = d_2 ^i$ by (1), we now have $c_1 ^{(i)} \equiv c_2 ^{(i)}$ in $\bar{R}^{(i-1)}$, proving (3).
\end{proof}

\begin{defn}
By Lemma \ref{lem:deg_pair}-(1), $d_1 ^i = \deg_{y_i} \bar{P}_1 ^{ (i)} = \deg_{y_i} \bar{P}_2 ^{ (i)} = d_2 ^i$. We now simply call it $d_i$. 

We remark that by definition, we have $d_i = d^{i/(i-1)} (\mathfrak{Z}_{\ell})$ for $\ell = 1,2$.\qed
\end{defn}

The following procedure will be used repeatedly.

\begin{lem}\label{lem:str gr 1}
Suppose $1 \leq i \leq n$. Let $\ell= 1, 2$. Let $\mathfrak{Z}_1$ and $\mathfrak{Z}_2 \in z^n (\widehat{X}, n)$ be two integral cycles such that $\mathfrak{Z}_1 \sim_I \mathfrak{Z}_2$, and
\begin{enumerate}
\item [$(\star)$] they have the generators as in \eqref{eqn:triangle} satisfying the properties of Proposition \ref{prop:triangular}. 
\end{enumerate}

Suppose $d^{j/(j-1)} (\mathfrak{Z}_\ell) = 1$ for $j > i$. (We allow $i=n$ as well, in which case there is no condition.) 

Then there exist cycles $C_{\mathfrak{Z}_\ell} \in z^n (\widehat{X}, n+1)$ and positive integer multiples of integral cycles, $\mathfrak{Z}'_1$ and $ \mathfrak{Z}'_2 \in z^n (\widehat{X}, n)$, such that for $\ell = 1,2$, we have $\mathfrak{Z}_\ell - \mathfrak{Z}'_\ell = \partial (C_{\mathfrak{Z}_\ell})$, $\mathfrak{Z}_1' \sim_I \mathfrak{Z}_2'$, the above $(\star)$ holds for $\mathfrak{Z}_{\ell} '$, and
$$
 d^{ j/ (j-1)}  (\mathfrak{Z}_\ell ') = \tuborg 
 \mbox{not necessarily }1 &\mbox{ for } j\geq i+1,\\
1 & \mbox{ for } j = i, \\
 d^{ j/ (j-1)} (\mathfrak{Z}_\ell)& \mbox{ for } j < i \sluttuborg
$$
\end{lem}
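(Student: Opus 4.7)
The plan is to mimic a Nesterenko--Suslin/Totaro ``norm reduction'' one variable at a time, carried out compatibly on the pair $(\mathfrak{Z}_1, \mathfrak{Z}_2)$. By $(\star)$, for each $\ell$ the polynomial $\bar{P}_\ell^{(i)}(y_i) \in R_\ell^{(i-1)}[y_i]$ is monic of degree $d_i = d^{i/(i-1)}(\mathfrak{Z}_\ell)$ with constant term $(-1)^{d_i} c_\ell^{(i)}$, where $c_\ell^{(i)} \in (R_\ell^{(i-1)})^\times$, while $\bar{P}_\ell^{(j)}$ is linear in $y_j$ for $j>i$ by the upper-degree hypothesis. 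The goal is to replace $\bar{P}_\ell^{(i)}(y_i)$ by the single linear factor $y_i - c_\ell^{(i)}$ modulo the boundary of a parametrized cycle, leaving the lower polynomials $P_\ell^{(1)},\dots,P_\ell^{(i-1)}$ untouched.

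I would append a new cube coordinate $x$ and take $C_{\mathfrak{Z}_\ell} \subset \widehat{X}\times\square^{n+1}$ to be the integral closed formal subscheme defined by $P_\ell^{(1)},\dots,P_\ell^{(i-1)}$, a bivariate polynomial $G_\ell(y_i,x) \in R_\ell^{(i-1)}[y_i,x]$, and the remaining linear $\bar{P}_\ell^{(j)}$ for $j>i$ (suitably re-expressed). The polynomial $G_\ell$ is to be designed so that (i) $G_\ell(y_i,0) = \bar{P}_\ell^{(i)}(y_i)$, giving $\partial_{n+1}^0 C_{\mathfrak{Z}_\ell} = \mathfrak{Z}_\ell$; (ii) the leading $x$-coefficient of $G_\ell$ is a unit times $(y_i - c_\ell^{(i)})^e$ for some $e\geq 1$, so that $\partial_{n+1}^\infty C_{\mathfrak{Z}_\ell}$ is $e$ times an integral cycle $\mathfrak{Z}'_\ell$ on which $y_i = c_\ell^{(i)}$, forcing $d^{i/(i-1)}(\mathfrak{Z}'_\ell)=1$; and (iii) $G_\ell(0,x)$ and the leading $y_i$-coefficient of $G_\ell$ are units in $R_\ell^{(i-1)}[x]$ whose vanishing loci (if any) avoid $\square$, so that the faces $\{y_i = 0\}$ and $\{y_i = \infty\}$ of $C_{\mathfrak{Z}_\ell}$ are empty. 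A natural candidate is $G_\ell(y_i,x) = \bar{P}_\ell^{(i)}(y_i) + x\cdot u(y_i)\cdot(y_i - c_\ell^{(i)})$ for a carefully chosen unit polynomial $u$ of $y_i$-degree $d_i - 1$; the unit constant term supplied by Proposition~\ref{prop:triangular}-(2) is precisely what makes (iii) feasible.

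With $G_\ell$ chosen, the verification splits into: (a) $C_{\mathfrak{Z}_\ell} \in z^n(\widehat{X}, n+1)$, which follows from the codimension count, the engineered emptiness of all codimension-one faces other than $\{x = 0,\infty\}$, and reduction mod $(t)$ for (\textbf{SF}) and (\textbf{TS}) using the unit image of $c_\ell^{(i)}$ in $\bar{R}_\ell^{(i-1)}$; (b) the boundary identity $\partial C_{\mathfrak{Z}_\ell} = \pm(\mathfrak{Z}_\ell - \mathfrak{Z}'_\ell)$, using (i), (ii), and Lemma~\ref{lem:no face} to rule out parasitic lower faces; (c) re-applying Proposition~\ref{prop:triangular} to $\mathfrak{Z}'_\ell$ to restore $(\star)$, with $d^{j/(j-1)}(\mathfrak{Z}'_\ell) = d^{j/(j-1)}(\mathfrak{Z}_\ell)$ for $j<i$ (lower generators untouched), $= 1$ for $j = i$, and uncontrolled for $j>i$ since substituting $y_i = c_\ell^{(i)}$ and re-triangulating may reshuffle their degrees; and (d) the mod-$I$ compatibility $\mathfrak{Z}'_1 \sim_I \mathfrak{Z}'_2$, which follows from Lemma~\ref{lem:deg_pair}: modulo $I$ one has $\bar{P}_1^{(i)} \equiv \bar{P}_2^{(i)}$ and $c_1^{(i)} \equiv c_2^{(i)}$ in the common ring $\bar{R}^{(i-1)}$, hence $G_1 \equiv G_2 \pmod{I}$, the multiplicities $e_1 = e_2$ match since $d_1 = d_2$ by Lemma~\ref{lem:deg_pair}(1), and Lemma~\ref{lem:mod Y=naive Y} converts the resulting $\mathcal{O}_{\square_Y^n}$-algebra isomorphism into the desired $\sim_I$-equivalence. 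The main obstacle I expect is engineering $G_\ell$ so that all non-intended faces of $C_{\mathfrak{Z}_\ell}$ genuinely vanish; preventing parasitic components at $\{y_i = 0\}$ (which would simultaneously violate (\textbf{GP}) and contribute spurious boundary terms) is the central technical issue, and exploiting the unit constant term from Proposition~\ref{prop:triangular}-(2) is exactly what makes this avoidable.
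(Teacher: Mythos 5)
Your proposal is correct and follows essentially the same route as the paper: the paper's interpolating cycle uses the polynomial $Q_{\mathfrak{Z}_\ell}(y_i,y_i') = \bar{P}_\ell^{(i)}(y_i) - (y_i-1)^{d_i-1}(y_i - c_\ell^{(i)})\,y_i'$, i.e.\ your ``unit polynomial $u$ of $y_i$-degree $d_i-1$'' is exactly $(y_i-1)^{d_i-1}$ (Totaro's trick), and the face computations, the use of the unit constant term from Proposition~\ref{prop:triangular}-(2) to empty the $\{y_i=0,\infty\}$ faces, and the mod-$I$ matching via Lemma~\ref{lem:deg_pair} all coincide with the paper's argument. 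The only cosmetic difference is that the paper organizes the cycle as a $\boxtimes$-product over $\Spf(R_\ell)$ and applies the finite push-forward $\pi_{\ell,*}$ of Lemma~\ref{lem:pf} to keep track of the integer multiplicities, whereas you describe the same cycle directly by its defining system in $\widehat{X}\times\square^{n+1}$.
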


\begin{remk}\label{remk:algorithm}
Before we prove Lemma \ref{lem:str gr 1}, let us give an illustration on how one can use it repeatedly to transform an integral cycle $\mathfrak{Z}$ into a positive integer multiple of an integral graph cycle, $\mathfrak{Z}'$. For instance, take $n=3$, and suppose that for an integral cycle $\mathfrak{Z}$, we had the ``degree vector" $(d^{3/2}, d^{2/1}, d^{1/0})^t = (3, 2, 2)^t$, where the superscript $t$ denotes the transpose of the matrix. 

The following shows how the ``degree vectors" change when we repeatedly apply Lemma \ref{lem:str gr 1}, and also how to apply it. 

The numbers over the arrows are the indices $i$ for which we apply Lemma \ref{lem:str gr 1}, and the entries $*$ are the numbers not known with certainty:
$$
\begin{matrix}
\mbox{3rd}\to\\
\mbox{2nd}\to\\
\mbox{1st}\to
\end{matrix}
\begin{bmatrix} 3\\ 2\\ 2 \end{bmatrix} \overset{i=3}{\to}
\begin{bmatrix} \textbf{1}\\ 2\\ 2 \end{bmatrix} \overset{i=2}{\to}
\begin{bmatrix} *\\ \textbf{1}\\ 2 \end{bmatrix} \overset{i=3}{\to}
\begin{bmatrix} \textbf{1}\\ 1\\ 2 \end{bmatrix} \overset{i=1}{\to}
\begin{bmatrix} *\\ *\\ \textbf{1} \end{bmatrix} \overset{i=3}{\to}
\begin{bmatrix} \textbf{1}\\ *\\ 1 \end{bmatrix} \overset{i=2}{\to}
\begin{bmatrix} *\\ \textbf{1}\\ 1 \end{bmatrix} \overset{i=3}{\to}
\begin{bmatrix} \textbf{1}\\ 1 \\ 1 \end{bmatrix}.
$$
In general, for $n\geq 1$, if we are given $(d^{n/ (n-1)}, \cdots, d^{1/0}) ^t= (a_n, \cdots, a_1)^t$ with $a_i \in \mathbb{N}$, then after a finite number of applications of Lemma \ref{lem:str gr 1}, we will get to $(1, \cdots, 1)^t$. The upper bound for the number of times this operation is needed to get to $(1, \cdots, 1)^t$ may be computed by solving a recurrence relation. We guess we need at most $1 + 2 + \cdots + 2^{n-1} = 2^n -1$ times. 
\qed
\end{remk}

\begin{proof}[Proof of Lemma \ref{lem:str gr 1}]
This is a bit technical, although its basic philosophical ideas go back to part of the arguments of \cite{Totaro}.

Since $i$ is fixed here, let $R_{\ell}:= k[ \mathfrak{Z}_{\ell} ^{(i-1)}]$ for $\ell=1,2$. They were previously called $R_{\ell} ^{(i-1)}$ in Lemma \ref{lem:deg_pair}. These are complete local domains finite and free over $k[[t]]$ (Lemma \ref{lem:module-finite}). 
Similarly, let $\tilde{R}_{\ell}:= k[  \mathfrak{Z}_{\ell} ^{ (i)}]$. They were previously called $R_{\ell} ^{(i)}$. 
Here, $R_{\ell} \hookrightarrow \tilde{R}_{\ell}$ is the finite extension of rings corresponding to the projection $f_{\ell} ^{ i/(i-1)}:  \mathfrak{Z}_{\ell} ^{ (i)} \to  \mathfrak{Z}_{\ell} ^{ (i-1)}$.

By the given assumption that $d^{ j/ (j-1)} ( \mathfrak{Z}_{\ell}) = 1$ for $j > i$, the images $\bar{P}_{\ell} ^{ (j)}$ of the polynomials $P_{\ell} ^{ (j)} (y_1, \cdots, y_j)$ in $\tilde{R}_{\ell} [ y_{i+1}, \cdots, y_n]$ are given by
\begin{equation}\label{eqn:y_j const}
\bar{P}_{\ell} ^{ (j)} (y_{i+1}, \cdots, y_n) = y_j - c_{\ell} ^{ (j)} \ \mbox{ for } i < j \leq n,
\end{equation}
where we recall that for $j>i$, we have $d_j = 1$ and $(-1)^{d_j} c_{\ell} ^{(j)} = - c_{\ell} ^{(j)}$ is the constant term of $\bar{P}_{\ell} ^{(j)}$.

On the other hand, recall that $(-1)^{d_i} c_{\ell} ^{(i)}$ is the constant term of the polynomial $\bar{P}_{\ell} ^{(i)} \in R_{\ell}[y_i]$, and we have $c_{\ell} ^{(i)} \in R_{\ell}^{\times}$ by Proposition \ref{prop:triangular}-(2). Let $ (y_i,  y_i ')$ be the coordinates of $\square^2$ in $ \Spf (R_{\ell}) \times \square^{2}$, and consider the closed formal subscheme $C_{\ell} \subset \Spf (R_{\ell}) \times \square ^{2}$ given by the polynomial (\emph{cf.} \cite[Lemma 2]{Totaro})
$$
Q_{\mathfrak{Z}_{\ell}}  (y_i, y_i'):= \bar{P}_{\ell} ^{(i)}  (y_i) - (y_i -1)^{d_i-1} (y_i - c_{\ell}^{(i)} ) y_i ' \in R_\ell [ y_i, y_{i}'].
$$

 Its codimension $1$ faces are 
\begin{equation}\label{eqn:**0}
\tuborg
C_{\ell} \cap \{ y_i = 0 \} = \{ (-1)^{d_i} c_{\ell} ^{(i)} ( 1- y_i') = 0 \} = \{ y_i' = 1 \} = \emptyset, \\
C_{\ell} \cap \{ y_i = \infty \} =^{\dagger} \{ 1 - y_i' = 0 \} = \emptyset,\\
C_{\ell} \cap \{ y_i' = 0 \} = \{ \bar{P}_{\ell} ^{(i)} (y_i) = 0 \},\\
C_{\ell} \cap \{ y_i ' = \infty \} = \{ ( y_i -1)^{d_i -1} (y_i - c_{\ell} ^{(i)}) = 0 \} = \{ y_i = c_{\ell} ^{(i)} \},\sluttuborg
\end{equation}
where $\dagger$ uses that the polynomial $\bar{P}_{\ell} ^{(i)}  (y_i) \in R_{\ell} [ y_i]$ is monic in $y_i$. 
Since these faces in \eqref{eqn:**0} are of codimension $\geq 1$ in $\Spf (R_{\ell}) \times \square ^1$, where they intersect no additional face (cf. Lemma \ref{lem:no face}), we see that $C_{\ell}$ intersects no codimension $\geq 2$ face. Thus we have the condition (\textbf{GP}) of Definition \ref{defn:HCG} for $C_{\ell}$. The condition (\textbf{SF}) of Definition \ref{defn:HCG} is immediate. Thus we have $C_{\ell} \in z^1 (\Spf (R_{\ell}), 2)$. Furthermore, from the calculations in \eqref{eqn:**0}, we deduce that
\begin{equation}\label{eqn:Cell}
\partial C_{\ell} = - \{ \bar{P} ^{(i)}_{\ell} (y_i) = 0 \} + [ c_{\ell} ^{(i)}] \in z^1(\Spf (R_{\ell}), 1).
\end{equation}

For $1 \leq j \leq n$, let $\alpha_{\ell} ^{ (j)} $ be the image of $y_j$ in $k[[t]] \{ y_1, \cdots, y_n \} /  I (\mathfrak{Z}_{\ell})$. For $j > i$, we are given that $d^{j/(j-1)} (\mathfrak{Z}_{\ell}) = 1$ so that we have
\begin{equation}\label{eqn:ss0}
k[[t]] \{ y_1, \cdots, y_n\} /  I (\mathfrak{Z}_{\ell}) = \tilde{R}_{\ell}.
\end{equation}
Under this identification, for $j>i$, $\alpha_{\ell} ^{(j)}$ coincides with the previously defined $c_{\ell} ^{ (j)}$ as used in \eqref{eqn:y_j const}. However these may differ for $j \leq i$.

Let $(Y_1, \cdots, Y_{i-1}, Y_i,  Y_i ', Y_{i+1}, \cdots, Y_n )$ be the coordinates of $\square^{n+1}$ in $  \Spf (R_{\ell}) \times  \square^{n+1}$. Define the cycle $\tilde{C}_{\mathfrak{Z}_{\ell}} \in z^n  (\Spf (R_{\ell}), n+1)$ given by the system of polynomials 
$$
\tilde{C}_{\mathfrak{Z}_{\ell}} :
\tuborg
P_{\ell} ^{(1)} (Y_1), \\
\ \ \ \ \vdots\\
P_{\ell} ^{(i-1)} (Y_1, \cdots, Y_{i-1}), \\
Q_{\mathfrak{Z}_{\ell}} (Y_i, Y_i '), \\
P_{\ell} ^{ (i+1)} (Y_1, \cdots, Y_{i+1}),\\
\ \ \ \ \vdots\\
P_{\ell} ^{ (n)} (Y_1, \cdots, Y_n)\sluttuborg
$$
in $R_{\ell} \{ Y_1, \cdots, Y_{i-1}, Y_i, Y_i', Y_{i+1}, \cdots, Y_n\}$.
Taking \eqref{eqn:y_j const} into account, we use the concatenation notations as in
$$ 
\tilde{C}_{\mathfrak{Z}_{\ell}}= \mathfrak{Z}_{\ell} ^{ (i-1)} \boxtimes C_{\ell} \boxtimes  [ c_{\ell} ^{(i+1)}]\boxtimes \cdots \boxtimes [c_{\ell} ^{(n)}].
$$

Define $C_{\mathfrak{Z}_{\ell}} \in z^n (\widehat{X}, n+1)$ to be
$$
C_{\mathfrak{Z}_{\ell}}:= (-1)^{i}\pi_{\ell, *} \left(\tilde{C}_{\mathfrak{Z}_{\ell}}\right),
$$
where $\pi_{\ell}:= f_{\ell} ^{ (i-1)/0}: \Spf (R_{\ell}) \to \widehat{X}$ is the finite surjective morphism, and 
$$
\pi_{\ell,*}: z^n (\Spf (R_{\ell}), *) \to z^n (\widehat{X}, *)
$$
is the finite push-forward as in Lemma \ref{lem:pf}.

We can compute the boundary of $C_{\mathfrak{Z}_{\ell}}$ by looking at the boundary of $\tilde{C}_{\mathfrak{Z}_{\ell}}$. There, note that both  $\mathfrak{Z}_{\ell} ^{ (i-1)}$ and $[c_{\ell} ^{ (j)}]$ for $j>i$ have no proper face (cf. Lemma \ref{lem:no face}). Consequently, combined with \eqref{eqn:Cell}, we can write
\begin{eqnarray}\label{eqn:bigface}
\notag  &  \ \ \ \ \ \  & \partial \tilde{C}_{\mathfrak{Z}_{\ell}} = (-1)^{i-1} \mathfrak{Z}_{\ell} ^{ (i-1)} \boxtimes ( \partial  C_{\ell}) \boxtimes  [ c_{\ell} ^{(i+1)}]\boxtimes \cdots \boxtimes [c_{\ell} ^{(n)}] \\
   & \ \ \ \ \ \ =&   (-1)^{i} \mathfrak{Z}_{\ell} ^{ (i-1)} \boxtimes   \{ \bar{P}_{\ell} ^{ (i)}= 0 \} \boxtimes [ c_{\ell} ^{(i+1)}]\boxtimes \cdots \boxtimes [c_{\ell} ^{(n)}]  \\
\notag   & \ \ \ \  \ \ -& (-1)^i \mathfrak{Z}_{\ell} ^{ (i-1)} \boxtimes [ c_{\ell} ^{(i)}] \boxtimes [ c_{\ell} ^{(i+1)}]\boxtimes \cdots \boxtimes [c_{\ell} ^{(n)}] \in z^n  (\Spf (R_{\ell}), n).
\end{eqnarray}

We analyze the two terms of \eqref{eqn:bigface}. Consider the closed formal subscheme $\tilde{\mathfrak{Z}}_{\ell} \subset  \Spf (\tilde{R}_{\ell} ) \times \square ^n$ given by the system of polynomials in $\tilde{R}_{\ell} \{ Y_1, \cdots, Y_n \} $
$$
 \tilde{\mathfrak{Z}}_{\ell}: \ \ \{ Y_1- \alpha_{\ell} ^{(1)}, \cdots, Y_n - \alpha_{\ell} ^{ (n)} \}.
$$
 Since $\tilde{R}_{\ell}$ is an integral $k$-domain and all $\alpha_{\ell} ^{ (j)} \in \tilde{R}_{\ell}$ for $1 \leq j \leq n$ under the identification \eqref{eqn:ss0}, the formal scheme $\tilde{\mathfrak{Z}}_{\ell}$ is integral. Then by definition, the first term of \eqref{eqn:bigface} without the sign
 $$
 \mathfrak{Z}_{\ell} ^{ (i-1)} \boxtimes   \{ \bar{P}_{\ell} ^{ (i)}= 0 \} \boxtimes [ c_{\ell} ^{(i+1)}]\boxtimes \cdots \boxtimes [c_{\ell} ^{(n)}]
 $$
  is just ${f_{\ell, *}^{i/(i-1)} } [ \tilde{\mathfrak{Z}}_{\ell}]$ because $\alpha_{\ell} ^{(j)} = c_{\ell} ^{(j)}$ for $j > i$. By definition, we have $f_{\ell, *} ^{i/ 0} (\tilde{\mathfrak{Z}}_{\ell}) = \mathfrak{Z}_{\ell}$  as well. Since $f_{\ell} ^{(i-1)/ 0} \circ f_{\ell} ^{i/ (i-1)}= f_{\ell} ^{ i/0}$ and $f_{\ell} ^{ (i-1)/0}= \pi_{\ell}$, we thus have $\pi_{\ell, *} (\mathfrak{Z}_{\ell} ^{ (i-1)} \boxtimes   \{ \bar{P}_{\ell} ^{ (i)}= 0 \} \boxtimes [ c_{\ell} ^{(i+1)}]\boxtimes \cdots \boxtimes [c_{\ell} ^{(n)}] ) = \mathfrak{Z}_{\ell}$.

This time, consider the closed formal subscheme $\tilde{\mathfrak{Z}}_{\ell}' \subset \Spf (\tilde{R}_{\ell}) \times \square^n$ given by the polynomials in $\tilde{R}_{\ell} \{ Y_1, \cdots, Y_n\}$
$$
\tilde{\mathfrak{Z}}_{\ell}': \ \ \{ Y_1 - \alpha_{\ell} ^{ (1)}, \cdots, Y_{i-1} - \alpha_{\ell} ^{ (i-1)}, Y_i - c_{\ell} ^{(i)}, Y_{i+1} - c_{\ell} ^{ (i+1)}, \cdots, Y_{n} - c_{\ell} ^{ (n)} \}.
$$
Here all $\alpha_{\ell} ^{(j)}$ for $j<i$, and $c_{\ell} ^{(j)}$ for $j \geq i$ are in $\tilde{R}_{\ell}$ so that $\tilde{\mathfrak{Z}}_{\ell}'$ is an integral formal scheme. Note that $c_{\ell} ^{(i)} \in R_{\ell}$ ($\subset \tilde{R}_{\ell}$), and it may not be equal to $\alpha_{\ell} ^{(i)}$ in $\tilde{R}_{\ell}$. By definition 
$$
f_{\ell, *} ^{ i/ (i-1)} (\tilde{\mathfrak{Z}}'_{\ell}) = \mathfrak{Z}_{\ell} ^{ (i-1)} \boxtimes [ c_{\ell} ^{(i)}] \boxtimes [ c_{\ell} ^{(i+1)}]\boxtimes \cdots \boxtimes [c_{\ell} ^{(n)}],
$$
 which is the second term of \eqref{eqn:bigface} without the sign, so this is an integral cycle. Hence applying $\pi_{\ell, *} = f_{\ell, *} ^{ (i-1)/0}$, we obtain a cycle $\pi_{\ell, *} (\mathfrak{Z}_{\ell} ^{ (i-1)} \boxtimes [ c_{\ell} ^{(i)}] \boxtimes [ c_{\ell} ^{(i+1)}]\boxtimes \cdots \boxtimes [c_{\ell} ^{(n)}])$, which is a positive integer multiple of an integral cycle.

Since the boundary operators commute with push-forwards (Lemma \ref{lem:pf}), we have
$$
\partial C_{\mathfrak{Z}_{\ell}} = (-1)^{i} \pi_{\ell,*} (\partial \tilde{C}_{\mathfrak{Z}_{\ell}}) = \mathfrak{Z}_{\ell} - \pi_{\ell, *} ( f_{\ell,*} ^{ i/ (i-1)}(\tilde{\mathfrak{Z}}'_{\ell})) = \mathfrak{Z}_{\ell} - f_{\ell, *} ^{ i/0} (\tilde{\mathfrak{Z}}'_{\ell}).
$$
So, letting $\mathfrak{Z}_{\ell}' := f_{\ell, *} ^{ i/0} (\tilde{\mathfrak{Z}}'_{\ell})$, we have $\partial C_{\mathfrak{Z}_{\ell}} = \mathfrak{Z}_{\ell} - \mathfrak{Z}'_{\ell} $.

\medskip

We now claim that this cycle $\mathfrak{Z}_{\ell}'$ satisfies the desired properties. This is a positive integer multiple of an integral cycle. We can still define $d^{j/(j-1)} (\mathfrak{Z}_{\ell}')$, ignoring the integer multiplicity. By definition, for $j < i$, we have $(\mathfrak{Z}'_{\ell}) ^{(j)} = \mathfrak{Z}_{\ell} ^{ (j)}$, thus $d^{ j/ (j-1)} (\mathfrak{Z}'_{\ell}) = d^{ j/ (j-1)} (\mathfrak{Z}_{\ell})$. 

Since $c_{\ell} ^{(i)} \in R_{\ell}$ and $(\mathfrak{Z}_{\ell}')^{ (i)} =  \mathfrak{Z}_{\ell} ^{ (i-1)} \boxtimes [ c_{\ell} ^{(i)}]$, we have $k[ (\mathfrak{Z}'_{\ell})^{(i)}] = k[ (\mathfrak{Z}'_{\ell}) ^{(i-1)}] = R_{\ell}$. So, the projection $ (\mathfrak{Z}'_{\ell}) ^{ (i)} \to (\mathfrak{Z}_{\ell}')^{(i-1)} = \mathfrak{Z}_{\ell} ^{ (i-1)}$ is of degree $1$, i.e. $d^{i/ (i-1)} (\mathfrak{Z}'_{\ell}) = 1$.

On the other hand, we have $c_{\ell} ^{ (j)}  \in \tilde{R}_{\ell} = k[ \mathfrak{Z}_{\ell} ^{ (i)}]$ for $j > i$, so that $d^{(i+1)/i} (\mathfrak{Z}_{\ell}')$ may not be equal to $1$.

Finally, from that $\mathfrak{Z}_1 \sim_{I} \mathfrak{Z}_2$, by Lemma \ref{lem:deg_pair}, we know that $\mathfrak{Z}_1 ^{ (i-1)} \sim_{I} \mathfrak{Z}_2 ^{ (i-1)}$ so that $c_1 ^{(i)} \equiv c_2 ^{(i)}$ in $R_1/ I R_1 = R_2/IR_2$, while we also know that $\mathfrak{Z}_1 ^{ (i)} \sim_{I} \mathfrak{Z}_2 ^{ (i)}$ so that $c_1 ^{(j)} \equiv c_2 ^{(j)}$ in $\tilde{R}_1 / I \tilde{R}_1 = \tilde{R}_2/ I \tilde{R}_2$. Hence $\mathfrak{Z}_1' \sim_I \mathfrak{Z}_2'$.
This finishes the proof of Lemma \ref{lem:str gr 1}.
\end{proof}

\bigskip

\begin{proof}[Proof of Proposition \ref{prop:strong graph}]
We begin to apply Lemma \ref{lem:str gr 1} to a pair of integral cycles $\mathfrak{Z}_{\ell}$ in $z^n (\widehat{X}, n)$ such that $\mathfrak{Z}_1 \sim_I \mathfrak{Z}_2$. By Proposition \ref{prop:triangular}, they satisfy the assumptions of Lemma \ref{lem:str gr 1}. So we can apply it repeatedly.  

As seen in Remark \ref{remk:algorithm}, in finite number of applications of Lemma \ref{lem:str gr 1}, we eventually get to a pair of integral  cycles with the equal multiplicities, whose degree vectors are $(d^{n/(n-1)} , \cdots, d^{1/0})^t =(1, \cdots, 1)^t$. By Lemma \ref{lem:degree gr}, they are positive integer multiples of integral graph cycles in $z^n (\widehat{X}, n)$. 

Take the sums, say $C_{\ell}$, of the cycles in $z^n (\widehat{X}, n+1)$ appearing in repeated applications of Lemma \ref{lem:str gr 1}. Taking their boundaries, we have the cancellations of all the intermediate $\mathfrak{Z}'_{\ell}$'s, and we have $\partial ( C_{\ell}) = \mathfrak{Z}_{\ell} - \mathfrak{Z}_{\ell}'$, where $\mathfrak{Z}_{\ell}$ is the given cycle in $z^n (\widehat{X}, n)$, and the resulting $\mathfrak{Z}_{\ell}'$ is a positive integer multiple of an integral graph cycle, such that $\mathfrak{Z}_1 ' \sim_I \mathfrak{Z}_2 '$. 
This proves Proposition \ref{prop:strong graph}.
\end{proof}

\section{The proofs of the main theorems}\label{sec:5}

\subsection{The isomorphism over $k[[t]]$} \label{sec:5.1}

The first application of Proposition \ref{prop:strong graph} is the following:

\begin{lem}\label{lem:surj k[[t]]}
Let $k$ be a field. Then the graph homomorphism $gr: K_n ^M (k[[t]]) \to \CH^n (\Spf (k[[t]]), n)$ of \eqref{eqn:Milnor0} is surjective.
\end{lem}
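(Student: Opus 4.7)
The plan is to deduce the lemma directly from Proposition \ref{prop:strong graph} by applying it to the trivial pair $\mathfrak{Z}_1 = \mathfrak{Z}_2 = \mathfrak{Z}$. First I would observe that $\CH^n(\Spf(k[[t]]), n)$ is generated as an abelian group by classes $[\mathfrak{Z}]$ of integral cycles $\mathfrak{Z} \in z^n(\widehat{X}, n)$, and by Lemma \ref{lem:no face} each such $\mathfrak{Z}$ is automatically in $\ker \partial$, hence represents a cycle class. So it suffices to exhibit every integral class $[\mathfrak{Z}]$ as lying in the image of $gr$.

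Given an integral $\mathfrak{Z}$, I would invoke Proposition \ref{prop:strong graph} with $\mathfrak{Z}_1 = \mathfrak{Z}_2 = \mathfrak{Z}$, noting that the mod $I$-equivalence is vacuous for a cycle with itself. The proposition supplies $E \in z^n(\widehat{X}, n+1)$ and a cycle $\mathfrak{Z}' \in z^n(\widehat{X}, n)$ with $\partial E = \mathfrak{Z} - \mathfrak{Z}'$, where $\mathfrak{Z}'$ is a positive integer multiple of an integral graph cycle. The next thing to verify is that the resulting graph cycle has the form $\Gamma_{(a_1, \ldots, a_n)}$ with $a_i \in k[[t]]^{\times}$. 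Tracing the iterative construction in Lemma \ref{lem:str gr 1}, one sees that the coordinates of $\mathfrak{Z}'$ are built from the constant terms $c^{(i)}$ of the triangular generators produced by Proposition \ref{prop:triangular}; by Proposition \ref{prop:triangular}(2) these are units in the corresponding coordinate rings, and once the degree vector has been reduced to $(1, \ldots, 1)^t$ as in Remark \ref{remk:algorithm}, the cycle lives over $\Spf(k[[t]])$ itself, so its coordinates are units in $k[[t]]^{\times}$.

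Writing $\mathfrak{Z}' = m \cdot \Gamma_{(a_1, \ldots, a_n)}$ for some $m \in \Z_{>0}$ and $a_i \in k[[t]]^{\times}$, the conclusion in $\CH^n(\Spf(k[[t]]), n)$ would then be
\[
[\mathfrak{Z}] \;=\; [\mathfrak{Z}'] \;=\; m\,[\Gamma_{(a_1, \ldots, a_n)}] \;=\; gr\bigl( m \cdot \{ a_1, \ldots, a_n \} \bigr),
\]
which exhibits every generator of $\CH^n(\Spf(k[[t]]), n)$ in the image of $gr$. The substantive work has already been carried out in Proposition \ref{prop:strong graph}, so the main obstacle has been cleared there; all that remains is the bookkeeping in Lemma \ref{lem:str gr 1} needed to pin down the shape of the final graph cycle and confirm that its coordinates lie in $k[[t]]^{\times}$, which is immediate from Proposition \ref{prop:triangular}(2).
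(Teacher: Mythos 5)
Your proposal is correct and follows essentially the same route as the paper: apply Proposition \ref{prop:strong graph} to the identical pair $(\mathfrak{Z},\mathfrak{Z})$, get $\partial E_{\mathfrak{Z}} = \mathfrak{Z} - \mathfrak{Z}'$ with $\mathfrak{Z}'$ a positive multiple of an integral graph cycle, and conclude that $[\mathfrak{Z}]$ lies in the image of $gr$. Your extra bookkeeping that the coordinates of the final graph cycle lie in $k[[t]]^{\times}$ (forced by membership in $z^n(\widehat{X},n)$ via the face and special-fiber conditions) is a sound, if implicit in the paper, addition.
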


\begin{proof}
Proposition \ref{prop:strong graph} is stated for pairs of integral cycles that are mod $I$-equivalent. We apply it to the identical pair $(\mathfrak{Z}, \mathfrak{Z})$ for each integral cycle $\mathfrak{Z} \in z^n (\Spf (k[[t]]), n)$. In particular, for a cycle $E_{\mathfrak{Z}} \in z^n (\Spf (k[[t]]), n+1)$ and a positive integer multiple of a graph cycle, call it $\Gamma$, we have $\partial E_{\mathfrak{Z}} = \mathfrak{Z} - \Gamma$. Since $\Gamma$ belongs to the image of the graph map $gr$ and $\mathfrak{Z} \equiv \Gamma$ in $\CH^n (\Spf (k[[t]]), n)$, this shows that $gr$ is surjective.
\end{proof}

\begin{remk}
One may give another proof of Lemma \ref{lem:surj k[[t]]} based on arguments of B. Totaro \cite[\S 4]{Totaro}. We outline what extra modifications are needed in addition to the arguments of \emph{loc.cit.} 

In \emph{loc.cit.}, when $F$ is a field, to prove that the graph homomorphism $gr: K_n ^M (F) \to \CH^n (\Spec (F), n)$ is an isomorphism,  Totaro used the norm $N : L ^{\times} \to F ^{\times}$, where $L$ is the residue field of a given point and $F  \subset L$ is a finite extension of fields. 

Specializing to the case $F= k\llp t\rlp$, what we need is that this norm operation for fields is compatible with the finiteness of our cycles over $k[[t]]$. Indeed if a finite ring extension $k[[t]] \to B$ describes an integral cycle $\mathfrak{Z} \in z^n (\Spf (k[[t]]), n)$, then $B$ is a free $k[[t]]$-module and there is the norm map $N: B^{\times} \to k[[t]]^{\times}$ by Corollary \ref{cor:finite free summ} and Definition \ref{defn:free norm}. This is compatible with the norm $N: L ^{\times} \to k\llp t\rlp ^{\times}$, when $L = {\rm Frac}(B)$, in that the following diagram commutes:
\begin{equation}\label{eqn:the norms}
\xymatrix{ B^{\times} \ar@{^(->}[r] \ar[d] ^N & L ^{\times} \ar[d] ^N \\
 k[[t]] ^{\times} \ar@{^(->}[r] & k\llp t\rlp ^{\times}.}
\end{equation}

So, the norms of elements of $B^{\times}$ belong to $k[[t]]^{\times}$. Following along the argument of \cite[\S 4]{Totaro}, together with the above, we may prove that the graph map of Lemma \ref{lem:surj k[[t]]} is surjective. We do not attempt to give details as this is redundant. 
 \qed
\end{remk}

\begin{cor}\label{cor:k[[t]] iso}
Let $k$ be a field with $|k| \gg 0$ so that $K_n ^M (k[[t]]) = \widehat{K}_n ^M (k[[t]])$. 

Then the graph map
$$ 
gr:K_n ^M (k[[t]]) \to \CH^n (\Spf (k[[t]]), n)
$$ 
is an isomorphism.
\end{cor}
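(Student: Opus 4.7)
The plan is to combine the two preceding lemmas that together give injectivity and surjectivity of the graph map, under the stated hypothesis on $k$.

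First, I would invoke Lemma \ref{lem:surj k[[t]]}, which states that for an arbitrary field $k$, the graph homomorphism
\[
gr: K_n^M(k[[t]]) \to \CH^n(\Spf(k[[t]]), n)
\]
is surjective. This part requires no extra assumption on $|k|$, as its proof rests on Proposition \ref{prop:strong graph} applied to identical pairs $(\mathfrak{Z}, \mathfrak{Z})$ of integral generators of $z^n(\Spf(k[[t]]), n)$, transforming each such $\mathfrak{Z}$ into a positive integer multiple of a graph cycle modulo the boundary of a cycle in $z^n(\Spf(k[[t]]), n+1)$.

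Next, for injectivity I would appeal to Lemma \ref{lem:inj k[[t]]}. Here is precisely where the hypothesis $|k| \gg 0$ (ensuring $K_n^M(k[[t]]) = \widehat{K}_n^M(k[[t]])$, and thus the injectivity of $K_n^M(k[[t]]) \hookrightarrow K_n^M(k\llp t \rlp)$ via Kerz's form of the Gersten conjecture) enters. The lemma shows that the composite
\[
K_n^M(k[[t]]) \xrightarrow{gr} \CH^n(\Spf(k[[t]]), n) \xrightarrow{\widehat{\Upsilon}} K_n^M(k\llp t \rlp)
\]
lands in the subgroup $K_n^M(k[[t]])$ and equals the identity on $K_n^M(k[[t]])$. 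This forces $gr$ to be injective.

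Combining these two statements immediately yields that $gr$ is an isomorphism under the hypothesis on $|k|$. There is no real obstacle here: the work has already been done in the preceding sections, and this corollary is simply the synthesis of the injectivity statement (Lemma \ref{lem:inj k[[t]]}) with the surjectivity statement (Lemma \ref{lem:surj k[[t]]}). One small remark worth including is that under the standing hypothesis, the regulator $\widehat{\Upsilon}$ restricted to $\CH^n(\Spf(k[[t]]), n)$ provides a two-sided inverse to $gr$, since $gr \circ \widehat{\Upsilon} = \operatorname{Id}$ follows from surjectivity of $gr$ together with $\widehat{\Upsilon} \circ gr = \operatorname{Id}$.
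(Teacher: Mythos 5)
Your proposal is correct and is essentially identical to the paper's own proof, which simply cites Lemma \ref{lem:surj k[[t]]} for surjectivity and Lemma \ref{lem:inj k[[t]]} for injectivity (the latter being where the hypothesis $|k|\gg 0$ enters, exactly as you say). The closing remark that $\widehat{\Upsilon}$, viewed as landing in $K_n^M(k[[t]])$, is then a two-sided inverse is a harmless and correct addition, though not needed.
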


\begin{proof}
The map $gr$ is surjective by Lemma \ref{lem:surj k[[t]]} and injectivity by Lemma \ref{lem:inj k[[t]]}. 
\end{proof}

We deduce the following formal-rigid geometric analogue of the Gersten conjecture for higher Chow groups (cf. S. Bloch \cite[Theorem (10.1)]{Bloch HC}):

\begin{cor}\label{cor:formal Gersten}
Let $k$ be a field with $|k| \gg 0$. Then the localization map
$$
\CH^n (\Spf (k[[t]]), n) \to \CH^n (\Sp (k\llp t\rlp ), n)
$$
is injective.
\end{cor}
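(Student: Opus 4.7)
The plan is to deduce this from the two isomorphism statements already established, namely Corollary \ref{cor:k[[t]] iso} on the formal side and Corollary \ref{cor:k((t)) iso} on the rigid side, together with the Gersten-type injectivity for improved Milnor $K$-theory of Gabber--Kerz that underlies the hypothesis $|k| \gg 0$.

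First I would assemble the commutative square
\begin{equation*}
\xymatrix{
K_n^M(k[[t]]) \ar[r]^-{gr}_-{\sim} \ar[d] & \CH^n(\Spf(k[[t]]), n) \ar[d]^{\eta} \\
K_n^M(k\llp t\rlp) \ar[r]^-{gr}_-{\sim} & \CH^n(\Sp(k\llp t\rlp), n),
}
\end{equation*}
whose commutativity is immediate from the definitions: a Milnor symbol $\{a_1,\ldots,a_n\}$ with $a_i \in k[[t]]^\times$ is sent to the graph cycle $\Gamma_{(a_1,\ldots,a_n)}$, whose generic fiber is the graph cycle in $\Sp(k\llp t\rlp)\times \square^n$ defined by the same equations $y_i = a_i$ (now viewed in $k\llp t\rlp^\times$). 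Under the bottom horizontal $gr$, this is precisely the image of the Milnor symbol $\{a_1,\ldots,a_n\}$ in $K_n^M(k\llp t\rlp)$, so the square commutes on generators and hence everywhere.

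Next I would invoke the hypothesis: since $|k| \gg 0$ ensures $K_n^M(k[[t]]) = \widehat{K}_n^M(k[[t]])$, the Gersten conjecture for improved Milnor $K$-theory, due to M.~Kerz \cite[Proposition 10, p.181]{Kerz finite}, gives the injectivity of the left vertical arrow $K_n^M(k[[t]]) \hookrightarrow K_n^M(k\llp t\rlp)$. The top horizontal is an isomorphism by Corollary \ref{cor:k[[t]] iso}, and the bottom horizontal is an isomorphism by Corollary \ref{cor:k((t)) iso}. A trivial diagram chase then shows that the right vertical arrow $\eta$ is injective: if $\alpha \in \CH^n(\Spf(k[[t]]), n)$ satisfies $\eta(\alpha) = 0$, write $\alpha = gr(\xi)$ for a unique $\xi \in K_n^M(k[[t]])$; commutativity gives $gr(\xi) = 0$ in $\CH^n(\Sp(k\llp t\rlp), n)$, hence $\xi = 0$ in $K_n^M(k\llp t\rlp)$ by the isomorphism at the bottom, hence $\xi = 0$ in $K_n^M(k[[t]])$ by Kerz's injectivity, hence $\alpha = 0$.

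The argument is essentially formal once the two horizontal isomorphisms are in hand; the only nontrivial external input is Kerz's result. There is no serious obstacle here, only the small bookkeeping check that the localization map $\eta$ of Example \ref{exm:loc} is genuinely compatible with $gr$ on the graph-cycle generators, which as noted above is visible directly from the defining equations.
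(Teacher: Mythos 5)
Your proof is correct and follows essentially the same route as the paper: the paper's own (one-line) proof cites exactly Corollary \ref{cor:k((t)) iso}, Corollary \ref{cor:k[[t]] iso}, and Kerz's Gersten-type injectivity of $K_n^M(k[[t]]) \to K_n^M(k\llp t\rlp)$, and your commutative square with the chase on graph generators is just the explicit form of that argument.
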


\begin{proof}
This follows from Corollaries \ref{cor:k((t)) iso} and \ref{cor:k[[t]] iso}, and the Gersten conjecture for $K_n ^M$ of $k[[t]]$ (see M. Kerz \cite[Proposition 10, p.181]{Kerz finite}).
\end{proof}

Lemma \ref{lem:surj k[[t]]} for $k[[t]]$ also implies the surjectivity for $k_m$:

\begin{cor}\label{cor:gr k_m surj}
Let $m \geq 2$.
Let $k$ be a field, $k_m:= k[t]/(t^m)$, and $Y:= \Spec (k_m)$.

Then the graph homomorphism $gr_{k_m} : K_n ^M (k_m) \to \BGH^n (k_m, n)$ of \eqref{eqn:Milnor-1} is surjective.
\end{cor}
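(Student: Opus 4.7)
\bigskip

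\emph{Proposal.} The plan is to deduce this directly from Lemma \ref{lem:surj k[[t]]} by chasing the obvious commutative square
$$
\xymatrix{
K_n^M(k[[t]]) \ar[r]^{gr} \ar@{->>}[d] & \CH^n(\Spf(k[[t]]), n) \ar@{->>}[d]^{\pi} \\
K_n^M(k_m) \ar[r]^{gr_{k_m}} & \BGH^n(k_m, n),
}
$$
which commutes by the very construction of $gr_{k_m}$ in \eqref{eqn:Milnor-1}.

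First I would check the surjectivity of the two vertical arrows. For the left one, since $k[[t]]$ is a local ring and $k[[t]] \twoheadrightarrow k_m$ is a surjection of local rings, the induced map on units $k[[t]]^{\times} \twoheadrightarrow k_m^{\times}$ is surjective (as invoked in the proof of \eqref{eqn:Milnor-1}); the induced map on Milnor $K$-groups, generated by symbols of units, is then surjective as well. For the right vertical arrow, by Lemma \ref{lem:no face} every cycle in $z^n(\widehat{X}, n)$ is a cycle (the boundary vanishes), so
$$
\CH^n(\Spf(k[[t]]), n) \;=\; \frac{z^n(\widehat{X}, n)}{\partial z^n(\widehat{X}, n+1)},
$$
and by Definition \ref{defn:defn Milnor} we have
$$
\BGH^n(k_m, n) \;=\; \frac{z^n(\widehat{X}, n)}{\partial z^n(\widehat{X}, n+1) + \mathcal{M}^n(\widehat{X}, Y, n)}.
$$
Hence $\pi$ is the canonical quotient by the image of $\mathcal{M}^n(\widehat{X}, Y, n)$, and is in particular surjective.

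Finally, I would combine these with Lemma \ref{lem:surj k[[t]]}, which states that the top horizontal map $gr$ is surjective. Given any class $\alpha \in \BGH^n(k_m, n)$, lift it to a class $\tilde{\alpha} \in \CH^n(\Spf(k[[t]]), n)$ via $\pi$, then find $\beta \in K_n^M(k[[t]])$ with $gr(\beta) = \tilde{\alpha}$ by Lemma \ref{lem:surj k[[t]]}, and push $\beta$ down to $K_n^M(k_m)$; commutativity of the square then gives $gr_{k_m}$ applied to this image equal to $\alpha$. Since there is no real obstacle here, this is essentially a bookkeeping argument; the substantive input is Lemma \ref{lem:surj k[[t]]}, whose proof uses Proposition \ref{prop:strong graph}.
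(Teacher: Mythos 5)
Your proposal is correct and is essentially the paper's own argument: the same commutative square, the surjectivity of the vertical maps via $k[[t]]^{\times}\twoheadrightarrow k_m^{\times}$ and the quotient description of $\BGH^n(k_m,n)$, with Lemma \ref{lem:surj k[[t]]} as the substantive input. (In fact the diagram chase only needs the right vertical map and the top map to be surjective, so your verification of the left vertical arrow is harmless extra bookkeeping.)
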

\begin{proof}
Recall from \eqref{eqn:defn Milnor} that
$$
\BGH^n (k_m, n)= \frac{ z^n (\Spf (k[[t]]), n)}{\partial  (z^n (\Spf (k[[t]]), n+1)) + \mathcal{M}^n (\Spf (k[[t]]), Y, n)}.
$$ 
Thus together with Lemma \ref{lem:no face}, we have the natural surjective map 
$$
\CH^n (\Spf (k[[t]]), n) \to \BGH^n (k_m, n),
$$
 and we deduce the commutative diagram
$$
\xymatrix{
K_n ^M (k[[t]]) \ar[d] \ar[rr]^{ gr \ \ \ \ } & &  \CH^n (\Spf (k[[t]]), n) \ar[d] \\
K_n ^M (k_m) \ar[rr] ^{gr_{k_m}} &&  \BGH^n (k_m, n).}
$$
Since the top horizontal arrow is surjective by Lemma \ref{lem:surj k[[t]]}, so is the bottom arrow.
\end{proof}

\subsection{The isomorphism over $k_m$}\label{sec:appendix 04}

In Corollary \ref{cor:gr k_m surj}, we saw that $gr_{k_m}$ is surjective. We want to show that it is an isomorphism when $|k|\gg 0$. For this, we want to construct its inverse:

\begin{prop}\label{prop:k_m regulator}
Let $m \geq 2$.
Let $k$ be a field with $|k|  \gg 0$ so that $K_n ^M (k[[t]]) = \widehat{K}_n ^M (k[[t]])$. 

Then there is a homomorphism of groups
\begin{equation}\label{eqn:regulator upsilon_m}
\Upsilon_m: \BGH^n (k_m, n) \to K_n ^M (k_m)
\end{equation}
such that the composite $ \Upsilon_m \circ gr_{k_m}: K_n ^M (k_m) \to \BGH^n (k_m, n) \to K_n ^M (k_m)$ is the identity map.
\end{prop}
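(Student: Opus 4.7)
The plan is to construct $\Upsilon_m$ by descending the composite
$$
\CH^n(\Spf(k[[t]]), n) \xrightarrow{\widehat{\Upsilon}} K_n^M(k[[t]]) \twoheadrightarrow K_n^M(k_m)
$$
through the quotient $z^n(\widehat{X}, n) \twoheadrightarrow \BGH^n(k_m, n)$. By Lemma \ref{lem:inj k[[t]]}, the image of $\widehat{\Upsilon}$ lies in $K_n^M(k[[t]]) \subset K_n^M(k\llp t\rlp)$ under the standing hypothesis on $|k|$, so the composite is defined on $\CH^n(\Spf(k[[t]]), n)$. Once the descent is established, the retraction identity $\Upsilon_m \circ gr_{k_m} = \id$ will follow from the identity $\widehat{\Upsilon} \circ gr = \id_{K_n^M(k[[t]])}$ of Lemma \ref{lem:inj k[[t]]} together with the surjectivity of $K_n^M(k[[t]]) \twoheadrightarrow K_n^M(k_m)$ (see \cite[Lemma 5.2]{HP}).

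The essential step is to show that the composite annihilates $\mathcal{M}^n(\widehat{X}, Y, n)$. By Lemma \ref{lem:mod Y=naive Y} I may replace $\mathcal{M}^n$ by $\mathcal{N}^n$, and so it suffices to treat a difference $[\mathfrak{Z}_1] - [\mathfrak{Z}_2]$ for a pair of integral cycles $\mathfrak{Z}_1, \mathfrak{Z}_2 \in z^n(\widehat{X}, n)$ with $\mathfrak{Z}_1 \sim_I \mathfrak{Z}_2$. Proposition \ref{prop:strong graph} then supplies $E_{\mathfrak{Z}_\ell} \in z^n(\widehat{X}, n+1)$ with $\partial E_{\mathfrak{Z}_\ell} = \mathfrak{Z}_\ell - \mathfrak{Z}_\ell'$, where each $\mathfrak{Z}_\ell'$ is a positive integer multiple of an integral graph cycle and $\mathfrak{Z}_1' \sim_I \mathfrak{Z}_2'$. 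Unwinding the algorithm of Lemma \ref{lem:str gr 1}, I expect to write $\mathfrak{Z}_\ell' = m\,\Gamma_{(a_1^{(\ell)}, \ldots, a_n^{(\ell)})}$ with a common multiplicity $m \geq 1$ and with $a_i^{(1)} \equiv a_i^{(2)} \pmod{t^m}$ for each $1 \leq i \leq n$: this is extracted from Lemma \ref{lem:deg_pair}, which already tracks the constants $c_\ell^{(i)}$ and degrees through each reduction step, combined with the local-factor matching of Lemma \ref{lem:mod Y=naive Y}. Evaluating the regulator on a graph cycle then gives $\widehat{\Upsilon}(\mathfrak{Z}_\ell') = m\,\{a_1^{(\ell)}, \ldots, a_n^{(\ell)}\} \in K_n^M(k[[t]])$, and these two Milnor symbols become equal in $K_n^M(k_m)$ by the congruences on the $a_i^{(\ell)}$. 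Hence $\widehat{\Upsilon}[\mathfrak{Z}_1]$ and $\widehat{\Upsilon}[\mathfrak{Z}_2]$ have the same image in $K_n^M(k_m)$, and the descent is established.

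The main obstacle I anticipate is the last bookkeeping: translating the abstract relation $\mathfrak{Z}_1' \sim_I \mathfrak{Z}_2'$, which is a statement about algebra isomorphisms after reducing modulo $(t^m)$, into explicit mod-$t^m$ congruences of the coordinates $a_i^{(\ell)}$ of the graph cycles, together with the equality of multiplicities. The key input should be the decomposition into local factors for complete semi-local $k[[t]]$-algebras via Lemma \ref{lem:module-finite}, applied exactly as in the proof of Lemma \ref{lem:mod Y=naive Y}, so that the local factors of the two graph cycles are paired bijectively with matching multiplicities. With this in hand, the construction of $\Upsilon_m$ and the verification of $\Upsilon_m \circ gr_{k_m} = \id$ on Milnor symbols $\{b_1, \ldots, b_n\}$ with $b_i \in k_m^\times$ (each lifted to a unit of $k[[t]]$) are both immediate.
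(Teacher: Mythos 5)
Your proposal is correct and follows essentially the same route as the paper: the paper defines $\Upsilon_m$ as the composite $z^n(\widehat{X},n)\twoheadrightarrow \CH^n(\Spf(k[[t]]),n)\xrightarrow{gr^{-1}}K_n^M(k[[t]])\twoheadrightarrow K_n^M(k_m)$ (equivalent to your use of $\widehat{\Upsilon}$, since $\widehat{\Upsilon}\circ gr=\id$ and $gr$ is surjective), kills boundaries because the map factors through $\CH^n$, and handles $\mathcal{M}^n(\widehat{X},Y,n)$ exactly as you do, via Lemma \ref{lem:mod Y=naive Y}, Proposition \ref{prop:strong graph}, and the evaluation on graph cycles. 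The bookkeeping you flag as the main obstacle is precisely what the paper packages into Lemmas \ref{lem:upsilon_m gr} and \ref{lem:graph upsilon_m} together with the $\sim_I$ conclusion of Proposition \ref{prop:strong graph}, so it is not a gap relative to the paper's own argument.
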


To construct the homomorphism \eqref{eqn:regulator upsilon_m}, first consider the composite
\begin{equation}\label{eqn:regulator upsilon_m defn}
\Upsilon_m: z^n (\Spf (k[[t]]), n) \twoheadrightarrow \CH^n (\Spf (k[[t]]), n)
\end{equation}
$$ 
\underset{\sim}{\overset{gr ^{-1}}{\to}} K_n ^M (k[[t]]) \twoheadrightarrow K_n ^M (k_m).
$$

Recall that by Lemma \ref{lem:no face} the first arrow of \eqref{eqn:regulator upsilon_m defn} is the quotient map, while the map $gr$ is an isomorphism by Corollary \ref{cor:k[[t]] iso}, so that the inverse $gr^{-1}$ exists. For the graph cycles, we have the following:

\begin{lem}\label{lem:upsilon_m gr} 
Let $m \geq 2$. For $\Gamma_{(a_1, \cdots, a_n)} \in z^n (\Spf (k[[t]]), n)$ with $a_i \in k[[t]]^{\times}$ for $1 \leq i \leq n$, we have
$$
\Upsilon_m (\Gamma_{(a_1, \cdots, a_n)}) = \{ \bar{a}_1, \cdots, \bar{a}_n\} \ \mbox{ in } K_n ^M (k_m),
$$
where $\bar{a}_i$ is the image of $a_i$ under $ k[[t]]^{\times} \to k_m^{\times}$. 
\end{lem}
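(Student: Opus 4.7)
The proof is essentially a direct unwinding of the definition of $\Upsilon_m$ given in \eqref{eqn:regulator upsilon_m defn}. The plan is as follows.

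First, I would recall that by the construction in Lemma \ref{lem:gr map}, the graph map $gr: K_n^M(k[[t]]) \to \CH^n(\Spf(k[[t]]),n)$ sends the Milnor symbol $\{a_1, \ldots, a_n\}$ (with $a_i \in k[[t]]^\times$) precisely to the class of the integral graph cycle $\Gamma_{(a_1,\ldots,a_n)}$, defined by the system of linear polynomials $y_i - a_i$ for $1 \le i \le n$. By Corollary \ref{cor:k[[t]] iso}, under the hypothesis $|k| \gg 0$, this map $gr$ is an isomorphism, so the inverse $gr^{-1}$ is well-defined, and therefore
$$
gr^{-1}\bigl( [\Gamma_{(a_1,\ldots,a_n)}] \bigr) = \{a_1, \ldots, a_n\} \ \mbox{ in } K_n^M(k[[t]]).
$$

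Next, I would trace $\Gamma_{(a_1,\ldots,a_n)}$ through the four arrows of the composite defining $\Upsilon_m$ in \eqref{eqn:regulator upsilon_m defn}. Under the quotient $z^n(\Spf(k[[t]]),n) \twoheadrightarrow \CH^n(\Spf(k[[t]]),n)$ the cycle goes to its class $[\Gamma_{(a_1,\ldots,a_n)}]$; under $gr^{-1}$ it maps to $\{a_1,\ldots,a_n\}$ by the preceding step; and finally under the canonical projection $K_n^M(k[[t]]) \twoheadrightarrow K_n^M(k_m)$ induced by the surjection $k[[t]]^\times \twoheadrightarrow k_m^\times$, $a_i \mapsto \bar{a}_i$, this Milnor symbol maps to $\{\bar{a}_1, \ldots, \bar{a}_n\}$. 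Putting these together yields the claimed equality.

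There is no real obstacle here: the lemma is essentially a bookkeeping check that the composite $\Upsilon_m$ sends a graph cycle to the expected Milnor symbol reduced modulo $t^m$. The only nontrivial ingredient invoked is the isomorphism $gr$ of Corollary \ref{cor:k[[t]] iso}, which in turn depends on the surjectivity (Lemma \ref{lem:surj k[[t]]}) and the injectivity via the regulator (Lemma \ref{lem:inj k[[t]]}); both have already been established.
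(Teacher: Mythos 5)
Your proposal is correct and follows the same route as the paper: the paper's proof simply notes that $gr(\{a_1,\ldots,a_n\})$ is the class of $\Gamma_{(a_1,\ldots,a_n)}$, so unwinding the composite in \eqref{eqn:regulator upsilon_m defn} gives the symbol $\{\bar{a}_1,\ldots,\bar{a}_n\}$ after reduction mod $t^m$. Your more explicit tracing of the cycle through each arrow, with the isomorphism of Corollary \ref{cor:k[[t]] iso} justifying $gr^{-1}$, is exactly the intended bookkeeping.
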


\begin{proof}
Since $gr (\{a_1, \cdots, a_n \})$ is the class $[ \Gamma_{(a_1, \cdots, a_n)}]$ in $\CH^n (\Spf (k[[t]]), n)$, we have $gr^{-1} ( [ \Gamma_{(a_1, \cdots, a_n)}]) = \{ a_1, \cdots, a_n \}$ in $K_n ^M (k[[t]])$. The lemma follows immediately.
\end{proof}

\bigskip

To show that \eqref{eqn:regulator upsilon_m defn} induces \eqref{eqn:regulator upsilon_m}, we need to check that $\Upsilon_m$ respects the equivalence relations coming from the boundary $\partial (z^n (\Spf (k[[t]]), n+1))$ and the subgroup $\mathcal{M}^n (\Spf (k[[t]]), Y, n)$. We check these two reciprocity results in Lemma \ref{lem:reciprocity} and Corollary \ref{cor:upsilon t^m well}, respectively:

\begin{lem}\label{lem:reciprocity}
The composite 
$$ 
z^n (\Spf (k[[t]]), n+1) \overset{\partial}{\to} z^n (\Spf (k[[t]]), n) \overset{\Upsilon_m}{\to} K_n ^M (k_m)
$$
is $0$.
\end{lem}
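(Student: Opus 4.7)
The plan is to observe that Lemma \ref{lem:reciprocity} is essentially immediate from the very construction of $\Upsilon_m$, once we invoke Lemma \ref{lem:no face} and Corollary \ref{cor:k[[t]] iso}.

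First, I would recall that by definition, $\Upsilon_m$ is the composite
$$
z^n (\Spf (k[[t]]), n) \twoheadrightarrow \CH^n (\Spf (k[[t]]), n) \overset{gr^{-1}}{\underset{\sim}{\to}} K_n ^M (k[[t]]) \twoheadrightarrow K_n ^M (k_m),
$$
where the first arrow is the canonical quotient map. Thus it suffices to show that for every $W \in z^n (\Spf (k[[t]]), n+1)$, the class of $\partial W$ vanishes already in $\CH^n (\Spf (k[[t]]), n)$.

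Next, by Definition \ref{defn:HCG1-1}, the group $\CH^n (\Spf (k[[t]]), n)$ is the $n$-th homology of the complex $(z^n (\widehat{X}, \bullet), \partial)$, that is, $\ker (\partial : z^n (\widehat{X}, n) \to z^n (\widehat{X}, n-1)) / \partial (z^n (\widehat{X}, n+1))$. However, by Lemma \ref{lem:no face}, every integral generator $\mathfrak{Z} \in z^n (\widehat{X}, n)$ satisfies $\partial \mathfrak{Z} = 0$, so the entire group $z^n (\widehat{X}, n)$ coincides with the kernel of $\partial$. Therefore
$$
\CH^n (\Spf (k[[t]]), n) = \frac{z^n (\Spf (k[[t]]), n)}{\partial (z^n (\Spf (k[[t]]), n+1))},
$$
and the first arrow in the definition of $\Upsilon_m$ is literally the projection modulo $\partial (z^n (\Spf (k[[t]]), n+1))$.

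Consequently, for any $W \in z^n (\Spf (k[[t]]), n+1)$, the image $\partial W$ lies in the subgroup being quotiented out, so its class in $\CH^n (\Spf (k[[t]]), n)$ is zero, and hence $\Upsilon_m (\partial W) = 0$ in $K_n ^M (k_m)$. There is no real obstacle here: the entire content of the lemma has already been absorbed into Lemma \ref{lem:no face} (which ensures $z^n (\widehat{X}, n)$ itself is already in the kernel of $\partial$, so that the quotient defining $\CH^n$ is exactly by $\partial (z^n (\widehat{X}, n+1))$) and Corollary \ref{cor:k[[t]] iso} (which guarantees $gr^{-1}$ makes sense). The proof is therefore a one-line verification chasing definitions.
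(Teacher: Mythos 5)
Your proof is correct and follows the same route as the paper: the paper's argument is simply that $\Upsilon_m$ factors through $\CH^n (\Spf (k[[t]]), n)$ by construction (with Lemma \ref{lem:no face} already invoked right after \eqref{eqn:regulator upsilon_m defn} to see that the first arrow is the quotient by $\partial(z^n(\Spf(k[[t]]), n+1))$), so $\Upsilon_m \circ \partial = 0$. You have merely spelled out the same definition-chase in slightly more detail.
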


\begin{proof}
By definition, the map $\Upsilon_m$ of \eqref{eqn:regulator upsilon_m defn} already factors through the group $\CH^n (\Spf (k[[t]]), n)$. From this we deduce that $\Upsilon_m \circ \partial = 0$.
\end{proof}

 Note that we have:

\begin{lem}\label{lem:graph upsilon_m}
Let $m \geq 2$.
For two integral graph cycles $\Gamma_1 = \Gamma_{(a_1, \cdots, a_n)}$ and $ \Gamma_2 = \Gamma_{(a_1', \cdots, a_n')}$ in $z^n (\Spf (k[[t]]), n)$ with $a_i, a_i ' \in k[[t]]^{\times}$, we have $\Gamma_1 \sim_I \Gamma_2$ if and only if $a_i \equiv a_i' \mod t^m$ for all $1 \leq i \leq n$.
\end{lem}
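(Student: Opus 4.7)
The plan is to use Lemma \ref{lem:mod Y=naive Y} to replace $\sim_Y$ by the simpler relation $\sim_I$, after which the claim reduces to a direct computation with the structure sheaves of graph cycles.

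First I would compute the mod $Y$-reduction of the structure sheaf of a graph cycle $\Gamma = \Gamma_{(a_1,\ldots,a_n)}$: it is
\[
\mathcal{O}_{\Gamma} \otimes_{\mathcal{O}_{\square_{\widehat{X}}^n}} \mathcal{O}_{\square_Y^n} \;\simeq\; k_m[y_1,\ldots,y_n]/(y_1 - \bar{a}_1, \ldots, y_n - \bar{a}_n),
\]
where $\bar{a}_i \in k_m^{\times}$ is the image of $a_i$ modulo $t^m$. As an $\mathcal{O}_{\square_Y^n}$-algebra this is the ring $k_m$ equipped with the structure map $k_m[y_1,\ldots,y_n] \to k_m$ sending $y_i \mapsto \bar{a}_i$. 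The \emph{if} direction then follows at once: when $\bar{a}_i = \bar{a}_i'$ for every $i$, the two mod $Y$-reductions of $\mathcal{O}_{\Gamma_1}$ and $\mathcal{O}_{\Gamma_2}$ are literally identical as $\mathcal{O}_{\square_Y^n}$-algebras, so the pair $(\mathcal{O}_{\Gamma_1}, \mathcal{O}_{\Gamma_2})$ lies in $\mathcal{L}^n(\widehat{X}, Y, n)$ and consequently $[\Gamma_1] - [\Gamma_2] \in \mathcal{M}^n(\widehat{X}, Y, n)$.

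For the converse, suppose $\Gamma_1 \sim_Y \Gamma_2$. By Lemma \ref{lem:mod Y=naive Y}, $[\Gamma_1] - [\Gamma_2] \in \mathcal{N}^n(\widehat{X}, Y, n)$. I would declare two integral cycles $\mathfrak{Z}, \mathfrak{Z}' \in z^n(\widehat{X}, n)$ to be $\approx$-equivalent if there is an $\mathcal{O}_{\square_Y^n}$-algebra isomorphism $\mathcal{O}_{\mathfrak{Z}} \otimes \mathcal{O}_{\square_Y^n} \simeq \mathcal{O}_{\mathfrak{Z}'} \otimes \mathcal{O}_{\square_Y^n}$. This $\approx$ is manifestly an equivalence relation (reflexivity, symmetry, and transitivity via composition of isomorphisms), and by its definition $\mathcal{N}^n(\widehat{X}, Y, n)$ is the subgroup spanned by $[\mathfrak{Z}] - [\mathfrak{Z}']$ with $\mathfrak{Z} \approx \mathfrak{Z}'$. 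Since $z^n(\widehat{X}, n)$ is free abelian on integral cycles, picking a representative in each $\approx$-class gives an isomorphism $z^n(\widehat{X}, n)/\mathcal{N}^n(\widehat{X}, Y, n) \cong \bigoplus_{\alpha} \mathbb{Z}$, where the sum runs over $\approx$-classes. In particular, $[\Gamma_1] - [\Gamma_2] \in \mathcal{N}^n(\widehat{X}, Y, n)$ forces $\Gamma_1 \approx \Gamma_2$.

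It remains to unpack $\Gamma_1 \approx \Gamma_2$. Any $\mathcal{O}_{\square_Y^n}$-algebra isomorphism $\phi$ between the two explicit quotients computed above must in particular restrict to the identity on the subring $k_m \subset k_m[y_1, \ldots, y_n]$; since $\phi$ maps the ring $k_m$ to itself $k_m$-linearly, it is the identity. Compatibility with the two structure maps $y_i \mapsto \bar{a}_i$ and $y_i \mapsto \bar{a}_i'$ then forces $\bar{a}_i = \phi(\bar{a}_i) = \bar{a}_i'$, i.e. $a_i \equiv a_i' \pmod{t^m}$ for every $i$. The only step requiring genuine care is the identification of $z^n(\widehat{X}, n)/\mathcal{N}^n(\widehat{X}, Y, n)$ with the free abelian group on $\approx$-classes; but precisely because $\approx$ is already an equivalence relation (no chain-of-generators argument is needed), this identification is straightforward and constitutes the cleanest way to organize the proof.
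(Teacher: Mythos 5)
Your proof is correct and takes essentially the route the paper intends, which deduces the lemma directly from Lemma \ref{lem:mod Y=naive Y} together with the evident computation of the mod $Y$ reductions of graph cycles; your explicit free-abelian-group argument showing that $[\Gamma_1]-[\Gamma_2]\in\mathcal{N}^n(\widehat{X},Y,n)$ forces $\Gamma_1\approx\Gamma_2$ is precisely the step the paper leaves implicit. The only small point to add is that $z^n(\widehat{X},n)=\un{z}^n(\widehat{X},n)$ is indeed free abelian on the admissible integral cycles, since the degenerate subgroup vanishes in this bidegree by the same (\textbf{SF}) plus (\textbf{TS}) reasoning as in Lemma \ref{lem:no face}.
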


\begin{cor}\label{cor:upsilon t^m well}
Let $m \geq 2$.
For two cycles $\mathfrak{Z}_1$ and $\mathfrak{Z}_2$ in $z^n (\Spf (k[[t]]), n)$ such that $\mathfrak{Z}_1 \sim_Y \mathfrak{Z}_2$, we have $\Upsilon_m (\mathfrak{Z}_1) = \Upsilon_m (\mathfrak{Z}_2)$.
\end{cor}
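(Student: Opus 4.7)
The plan is to assemble three reciprocity-style inputs already on the table: Lemma \ref{lem:mod Y=naive Y} (which replaces $\sim_Y$ by the cruder $\sim_I$ on $z^n(\widehat{X},n)$), Proposition \ref{prop:strong graph} (which converts any $\sim_I$-pair of integral cycles into a $\sim_I$-pair of integer multiples of graph cycles modulo a $\partial$-boundary), and Lemma \ref{lem:reciprocity} ($\Upsilon_m$ kills $\partial z^n(\widehat{X},n+1)$). Together these reduce the corollary to an explicit computation on graph cycles, which is then handled by Lemmas \ref{lem:graph upsilon_m} and \ref{lem:upsilon_m gr}.

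Concretely, since $\mathcal{M}^n(\widehat{X},Y,n)=\mathcal{N}^n(\widehat{X},Y,n)$ by Lemma \ref{lem:mod Y=naive Y}, it suffices to show $\Upsilon_m([\mathfrak{Z}_1])=\Upsilon_m([\mathfrak{Z}_2])$ for a single generator, i.e.\ for a pair of integral cycles $\mathfrak{Z}_1,\mathfrak{Z}_2\in z^n(\widehat{X},n)$ with $\mathfrak{Z}_1\sim_I\mathfrak{Z}_2$. Apply Proposition \ref{prop:strong graph} to obtain $E_{\mathfrak{Z}_\ell}\in z^n(\widehat{X},n+1)$ and positive integer multiples of integral graph cycles $\mathfrak{Z}'_\ell$ with $\partial E_{\mathfrak{Z}_\ell}=\mathfrak{Z}_\ell-\mathfrak{Z}'_\ell$ and $\mathfrak{Z}'_1\sim_I\mathfrak{Z}'_2$. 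By Lemma \ref{lem:reciprocity}, $\Upsilon_m(\mathfrak{Z}_\ell)=\Upsilon_m(\mathfrak{Z}'_\ell)$ for $\ell=1,2$, so the problem reduces to comparing $\Upsilon_m$ on the two graph-type cycles.

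For the final step, write $\mathfrak{Z}'_\ell=d_\ell\,\Gamma_\ell$ with $\Gamma_\ell=\Gamma_{(a^{(\ell)}_1,\ldots,a^{(\ell)}_n)}$. Inspection of the iterative construction in Lemma \ref{lem:str gr 1} together with Lemma \ref{lem:deg_pair}-(1) shows that at each step the two members of the pair have the same degree $d_i=d^{i/(i-1)}(\mathfrak{Z}_\ell)$, so the multiplicities coincide: $d_1=d_2=:d$. The coordinates $a^{(\ell)}_i$ arise from the constant terms $c^{(i)}_\ell$ of the monic polynomials produced by Proposition \ref{prop:triangular}, and Lemma \ref{lem:deg_pair}-(3) gives $c^{(i)}_1\equiv c^{(i)}_2\pmod{t^m}$ in the common quotient ring. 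Hence $\Gamma_1\sim_Y\Gamma_2$ by Lemma \ref{lem:graph upsilon_m}, and by Lemma \ref{lem:upsilon_m gr},
$$
\Upsilon_m(\mathfrak{Z}'_\ell)=d\{\bar a^{(\ell)}_1,\ldots,\bar a^{(\ell)}_n\}\in K_n^M(k_m),
$$
which is independent of $\ell$ since each $\bar a^{(1)}_i=\bar a^{(2)}_i$ in $k_m^\times$.

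The main obstacle is the bookkeeping in the transition from $\mathfrak{Z}_\ell$ to $\mathfrak{Z}'_\ell$: Proposition \ref{prop:strong graph} is stated somewhat loosely as producing ``positive integer multiples of integral graph cycles," so one must trust (or re-verify through Remark \ref{remk:algorithm}) that (i) the two multiplicities $d_1,d_2$ coincide, and (ii) the underlying graph-cycle coordinates really agree modulo $t^m$ rather than just yielding some mild $\sim_I$ relation between sums. Both points are consequences of Lemma \ref{lem:deg_pair} being applied inductively across the degree-vector reduction; once this is granted, the computation above is immediate and concludes the proof.
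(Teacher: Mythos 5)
Your proposal is correct and follows essentially the same route as the paper: reduce to a $\sim_I$-pair of integral cycles via Lemma \ref{lem:mod Y=naive Y}, apply Proposition \ref{prop:strong graph} and Lemma \ref{lem:reciprocity} to replace them by the mod-$I$-equivalent multiples of graph cycles, and conclude with Lemmas \ref{lem:upsilon_m gr} and \ref{lem:graph upsilon_m}. The extra bookkeeping you flag (equal multiplicities and coordinates congruent mod $t^m$, extracted from Lemma \ref{lem:deg_pair} through the iteration of Lemma \ref{lem:str gr 1}) is exactly what the paper leaves implicit in its citation of those two lemmas, so your verification is a harmless refinement rather than a deviation.
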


\begin{proof}
By Lemma \ref{lem:mod Y=naive Y}, it is enough to show the statement for two integral cycles $\mathfrak{Z}_1$ and $\mathfrak{Z}_2$ such that  $\mathfrak{Z}_1 \sim_I \mathfrak{Z}_2$, i.e. when their mod $I$-reductions are equal. 

By Proposition \ref{prop:strong graph}, there is a pair $\mathfrak{Z}_1', \mathfrak{Z}_2'$ of positive integer multiples of integral graph cycles such that $\mathfrak{Z}_1' \sim_I \mathfrak{Z}_2'$, and $\mathfrak{Z}_{\ell} - \mathfrak{Z}_{\ell}' = \partial E_{\mathfrak{Z}_{\ell}}$ for some cycles $E_{\mathfrak{Z}_{\ell}} \in z^n( \Spf (k[[t]]) , n+1)$ for $\ell = 1,2$. 

Since $\Upsilon_m \circ \partial = 0$ by Lemma \ref{lem:reciprocity}, we deduce $\Upsilon_m (\mathfrak{Z}_{\ell}) - \Upsilon_m (\mathfrak{Z}_{\ell}') = \Upsilon_m (\partial E_{\mathfrak{Z}_{\ell}}) = 0$ for $\ell = 1,2$. However $\Upsilon_m (\mathfrak{Z}_1') = \Upsilon_m (\mathfrak{Z}_2')$ by Lemmas \ref{lem:upsilon_m gr} and \ref{lem:graph upsilon_m}. Hence 
$$
\Upsilon_m (\mathfrak{Z}_1) = \Upsilon_m (\mathfrak{Z}_1 ' ) = \Upsilon_m (\mathfrak{Z}_2') = \Upsilon_m (\mathfrak{Z}_2), 
$$
 as desired.
\end{proof}

\bigskip

\begin{proof}[Proof of Proposition \ref{prop:k_m regulator}]
By Lemma \ref{lem:reciprocity} and Corollary \ref{cor:upsilon t^m well}, the regulator $\Upsilon_m$ kills the boundaries of cycles and respects the mod $Y$-equivalence. Hence $\Upsilon_m$ of \eqref{eqn:regulator upsilon_m defn} is well-defined on $\BGH^n (k_m, n)$, inducing the homomorphism \eqref{eqn:regulator upsilon_m}.

On the other hand, for an integral graph cycle $\Gamma_{(a_1, \cdots, a_n) } \in z^n (\Spf (k[[t]]), n)$ for some $a_1, \cdots, a_n \in k[[t]]^{\times}$, by Lemma \ref{lem:upsilon_m gr}, we have $\Upsilon_m (\Gamma_{(a_1, \cdots, a_n)}) = \{ \bar{a}_1, \cdots, \bar{a}_n \} \in K_n ^M (k_m)$, where $\bar{a}_i$ is the image of $a_i$ in $k_m ^{\times}$. This implies that $\Upsilon_m \circ gr_{k_m}$ is the identity on $K_n ^M (k_m)$.
\end{proof}

The above discussions now summarize into the following. This proves the part (2) of the first main theorem, Theorem \ref{thm:main intro}:

\begin{thm}\label{thm:k_m main}
Let $m \geq 2$.
Let $k$ be a field with $|k| \gg 0$ so that $K_n ^M (k[[t]]) = \widehat{K}_n ^M (k[[t]])$. 

Then the graph homomorphism 
$$
gr_{k_m} : K_n ^M (k_m) \to \BGH^n (k_m, n)
$$
is an isomorphism.
\end{thm}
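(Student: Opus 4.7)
The plan is to prove the isomorphism by separately establishing surjectivity and injectivity, both of which follow from results already assembled in the preceding subsections under the standing hypothesis $K_n^M(k[[t]]) = \widehat{K}_n^M(k[[t]])$ guaranteed by $|k| \gg 0$.

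First, for surjectivity, I would invoke \corref{cor:gr k_m surj} directly. That corollary is deduced from \lemref{lem:surj k[[t]]}, which gives surjectivity of $gr$ over $\Spf(k[[t]])$ by using \propref{prop:strong graph} to transform an arbitrary integral cycle $\mathfrak{Z} \in z^n(\Spf(k[[t]]), n)$ into a positive integer multiple of an integral graph cycle modulo the boundary of some cycle in $z^n(\Spf(k[[t]]), n+1)$. Combined with surjectivity of $K_n^M(k[[t]]) \to K_n^M(k_m)$ arising from $k[[t]]^\times \to k_m^\times$, this descends to surjectivity of $gr_{k_m}$.

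Second, for injectivity, the plan is to exhibit a one-sided inverse. By \propref{prop:k_m regulator}, under the hypothesis on $|k|$ there is a homomorphism $\Upsilon_m : \BGH^n(k_m, n) \to K_n^M(k_m)$ such that $\Upsilon_m \circ gr_{k_m} = \mathrm{id}_{K_n^M(k_m)}$. Hence $gr_{k_m}$ admits a left inverse and is injective. Combined with surjectivity, this yields the asserted isomorphism.

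The main obstacle is not in this final assembly but in the construction of $\Upsilon_m$ given by \propref{prop:k_m regulator}, which defines it as the composite
\begin{equation*}
z^n(\Spf(k[[t]]), n) \twoheadrightarrow \CH^n(\Spf(k[[t]]), n) \overset{gr^{-1}}{\longrightarrow} K_n^M(k[[t]]) \twoheadrightarrow K_n^M(k_m).
\end{equation*}
This requires the invertibility of $gr$ over $k[[t]]$ (\corref{cor:k[[t]] iso}, which is where the hypothesis $K_n^M(k[[t]]) = \widehat{K}_n^M(k[[t]])$ is essential, via the injectivity from \lemref{lem:inj k[[t]]}) together with two reciprocity checks ensuring $\Upsilon_m$ descends to the quotient defining $\BGH^n(k_m,n)$: namely that $\Upsilon_m$ kills the image of $\partial\colon z^n(\Spf(k[[t]]), n+1) \to z^n(\Spf(k[[t]]), n)$ (\lemref{lem:reciprocity}, automatic since $\Upsilon_m$ factors through $\CH^n$) and that $\Upsilon_m$ respects the mod-$Y$ equivalence (\corref{cor:upsilon t^m well}, whose crux is again \propref{prop:strong graph} via the identification of $\sim_Y$ with $\sim_I$ from \lemref{lem:mod Y=naive Y}).
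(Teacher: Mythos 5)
Your proposal is correct and follows the paper's own proof essentially verbatim: surjectivity from Corollary \ref{cor:gr k_m surj} and injectivity via the left inverse $\Upsilon_m$ from Proposition \ref{prop:k_m regulator}, with the same supporting chain (Corollary \ref{cor:k[[t]] iso}, Lemma \ref{lem:reciprocity}, Corollary \ref{cor:upsilon t^m well}).
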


\begin{proof}
By Corollary \ref{cor:gr k_m surj}, the map $gr_{k_m}$ is surjective. On the other hand, under our assumption on the cardinality $|k|$, by Proposition \ref{prop:k_m regulator}, there is a homomorphism $\Upsilon_m$ such that the composite $\Upsilon_m \circ gr_{k_m}$ is the identity of $K_n ^M (k_m)$. In particular, the map $gr_{k_m}$ is injective. This proves the theorem.
\end{proof}

Recall that by the condition (\textbf{SF}) of Definition \ref{defn:HCG}, we have the Gysin map $ev_{\CH}: \BGH^n (k_{m+1}, n) \to \CH^n (k, n)$ given by the evaluation $t=0$. This is surjective because $gr_k: K_n ^M (k) \to \CH^n (k, n)$ is an isomorphism by \cite{NS} and \cite{Totaro}, and the map $ev_0$ in the following commutative diagram is surjective: 
\begin{equation}\label{eqn:final rel}
\xymatrix{
K_n ^M (k_{m+1}) \ar[d]_{\simeq} ^{gr_{k_{m+1} }} \ar[rr] ^{ev_0} & &  K_n ^M (k) \ar[d]_{\simeq} ^{gr_k} \\
\BGH^n (k_{m+1}, n) \ar[rr] ^{ev_{\CH}} & &  \CH^n (k, n).}
\end{equation}
Taking the kernels of the horizontal maps, we define the relative parts 
\begin{equation}\label{eqn:rel CH}
\tuborg K_n ^M (k_{m+1}, (t)):= \ker (ev_0)\ \ \mbox{and} \\
\BGH^n ((k_{m+1}, (t)), n) := \ker (ev_{\CH}).\sluttuborg
\end{equation}
We deduce the second main theorem, Theorem \ref{thm:main intro 2}:

\begin{thm}\label{thm:rulling}
Let $m \geq 1$.
Let $k$ be a field of characteristic $0$. 

Then we have an isomorphism of the big de Rham-Witt forms of $k$ of Hesselholt-Madsen \cite{HeMa} with the relative higher Chow groups
$$
\mathbb{W}_m \Omega_k ^{n-1} \simeq \BGH^n  ((k_{m+1}, (t)), n).
$$
\end{thm}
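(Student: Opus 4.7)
The plan is to deduce Theorem \ref{thm:rulling} directly from Theorem \ref{thm:k_m main} by restricting to relative parts, followed by an identification of the relative Milnor $K$-group $K_n^M(k_{m+1},(t))$ with the big de Rham-Witt group $\mathbb{W}_m\Omega_k^{n-1}$ which is essentially already in the literature.

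First I would apply Theorem \ref{thm:k_m main} with $m+1$ in place of $m$, so that $gr_{k_{m+1}}$ is an isomorphism; the map $gr_k$ for the residue field is the Nesterenko-Suslin--Totaro isomorphism \cite{NS,Totaro}. These two isomorphisms form the vertical arrows of the commutative square \eqref{eqn:final rel}, whose horizontal evaluations $ev_0$ (surjective because $k_{m+1}^{\times}\twoheadrightarrow k^{\times}$) and $ev_{\BGH}$ (surjective as noted right before the theorem) turn the two rows into short exact sequences, whose kernels are precisely the relative groups defined in \eqref{eqn:rel CH}. A standard diagram chase (the five lemma on kernels) then produces an induced isomorphism
$$
gr_{k_{m+1}}^{\mathrm{rel}}: K_n^M(k_{m+1}, (t)) \overset{\sim}{\to} \BGH^n((k_{m+1}, (t)), n).
$$

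It remains to supply a canonical isomorphism $\mathbb{W}_m\Omega_k^{n-1}\overset{\sim}{\to}K_n^M(k_{m+1}, (t))$. On Witt-differential generators this map takes the explicit form
$$
V^i([a])\, d\log b_1 \wedge \cdots \wedge d\log b_{n-1} \ \longmapsto\ \{1-a t^{i+1},\, b_1,\ldots, b_{n-1}\}
$$
for $a\in k$, $b_j \in k^{\times}$ and $0\le i \le m-1$, and is extended via the Verschiebung, Frobenius, and restriction structure of the big de Rham-Witt complex of Hesselholt-Madsen \cite{HeMa}. Under our standing hypothesis $|k|\gg 0$ (which ensures the coincidence of the naive and the Gabber-Kerz improved Milnor $K$-theories of $k[[t]]$), this map is well-defined and bijective; this is essentially a combination of R\"ulling's theorem \cite{R} identifying $\mathbb{W}_m\Omega_k^{n-1}$ with a relative additive higher Chow group in the Milnor range, together with the Milnor-range graph map identifying that additive Chow group with $K_n^M(k_{m+1}, (t))$ (as used in \cite{PU Milnor} in characteristic $0$).

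The main obstacle I expect is careful bookkeeping in this last step: namely ensuring that the existing identification $\mathbb{W}_m\Omega_k^{n-1}\simeq K_n^M(k_{m+1}, (t))$ is available in arbitrary characteristic once $|k|\gg 0$, which requires threading R\"ulling's theorem through the Gabber-Kerz improved Milnor $K$-theory in place of the characteristic-zero arguments of \cite{PU Milnor}, and verifying that all de Rham-Witt relations (Teichm\"uller, Verschiebung, Frobenius and restriction) lift cleanly to the relative Milnor group. Once this identification is in hand, composing with $gr_{k_{m+1}}^{\mathrm{rel}}$ yields the isomorphism \eqref{eqn:gr rel intro} of Theorem \ref{thm:rulling}.
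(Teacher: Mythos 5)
Your first step is exactly the paper's argument: Theorem \ref{thm:k_m main} applied with $m+1$ in place of $m$, together with the Nesterenko--Suslin--Totaro theorem, makes both vertical arrows of \eqref{eqn:final rel} isomorphisms, and since the two horizontal evaluation maps are surjective, restricting to their kernels gives the isomorphism of relative parts $K_n^M(k_{m+1},(t)) \overset{\sim}{\to} \BGH^n((k_{m+1},(t)),n)$, precisely as in the paper's proof.

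The genuine gap is in your last step, the identification $\mathbb{W}_m\Omega_k^{n-1}\simeq K_n^M(k_{m+1},(t))$. You propose to obtain it by combining R\"ulling's theorem \cite{R} with the graph-map identification used in \cite{PU Milnor}, but that combination does not produce the statement you need: R\"ulling's theorem identifies $\mathbb{W}_m\Omega_k^{n-1}$ with an \emph{additive} higher Chow group, while \cite{PU Milnor} identifies a different (``mod $t^m$'') cycle group with the \emph{absolute} group $K_n^M(k_m)$ and only in characteristic $0$; neither result, nor their concatenation, gives the relative Milnor-$K$ versus de Rham--Witt comparison in arbitrary characteristic, and you explicitly leave this as an unresolved ``obstacle'' rather than proving it. The paper closes exactly this point by quoting R\"ulling--Saito \cite[Theorem 4.8]{RS}, a purely algebraic theorem asserting $K_n^M(k_{m+1},(t))\simeq \mathbb{W}_m\Omega_k^{n-1}$ (via essentially the explicit map on generators you wrote down), so no detour through additive Chow groups or characteristic-zero cycle computations is required. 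With that citation your argument becomes complete and coincides with the paper's; as written, the final identification remains unproved.
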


\begin{proof}
The diagram \eqref{eqn:final rel} with the isomorphic vertical maps induces the isomorphism of the relative parts in \eqref{eqn:rel CH}, 
$$
K_n ^M (k_{m+1}, (t)) \overset{\simeq}{\to} \BGH^n ((k_{m+1}, (t)), n).
$$ 
Here $K_n ^M ( k_{m+1}, (t)) \simeq \bigoplus_{i=1} ^{m} \Omega_{k/\mathbb{Z}} ^{n-1}$ (Park-\"Unver \cite[Proposition 5.4.2]{PU Milnor}), while $\mathbb{W}_m \Omega_k ^{n-1} \simeq  \bigoplus_{i=1} ^{m} \Omega_{k/\mathbb{Z}} ^{n-1} $ (R\"ulling \cite[Remark 1.12]{R}). This proves the theorem.
\end{proof}

Theorem \ref{thm:rulling} is the counterpart to a result known for additive higher Chow groups of a field by K. R\"ulling \cite{R}. In an earlier version, it was mistakenly claimed for all fields $k$ using \cite{RS}, but Matthew Morrow pointed out that the reasoning had a jump. The characteristic $p>0$ case remains to be checked. The referee has suggested a good way to argue for this remaining case, and it will be discussed in a separate work.

 \bigskip

\noindent\emph{Acknowledgments.} 
JP thanks Fumiharu Kato and Shane Kelly for some remarks during JP's visit to the Tokyo Institute of Technology in Spring 2019, that influenced the direction of the project.
JP thanks Joseph Ayoub, Chang-Yeon Chough, Youngsu Kim, Wansu Kim, Matthew Morrow and the referee for numerous comments that helped improving the article.

JP thanks Spencer Bloch for his teaching long ago that the Laurent fields should be related to the topic. JP thanks Damy for peace of mind at home and Seungmok Yi for helping him in enduring the long time he works on the project.

During this work, JP was supported by the National Research Foundation of Korea (NRF) grant (2018R1A2B6002287) funded by the Korean government (Ministry of Science and ICT).

\end{document}